\newtheorem{X}{X}[section]
\newtheorem{corollary}[X]{Corollary}
\newtheorem{lemma}[X]{Lemma}
\newtheorem*{*lemma}{Lemma}
\newtheorem{proposition}[X]{Proposition}
\newtheorem*{*proposition}{Proposition}
\newtheorem{theorem}[X]{Theorem}
\newtheorem{aside}[X]{Aside}
\newtheorem*{*definition}{Definition}
\newtheorem{example}[X]{Example}
\newtheorem*{*example}{Example}
\newtheorem{plain}[X]{}
\theoremstyle{nonumberplain}
\newtheorem{proof}{Proof}
\let\oldvec=\vec
\renewcommand{\vec}[1]{\reflectbox{\ensuremath{\oldvec{\reflectbox{\ensuremath{#1}}}}}}
\titleformat*{\subsection}{\large\scshape}
\titleformat*{\subsubsection}{\slshape}
\titleformat*{\section}{\LARGE\bfseries}
\titleformat*{\subsection}{\Large\itshape}
\titleformat*{\subsubsection}{\scshape}
\titleformat*{\paragraph}{\itshape}
\setlist{nolistsep}
\newcommand{\eb}[1]{{\itshape\bfseries#1}}
\renewcommand{\emph}{\eb}
\let\cite\citealt
\newcommand{\bcomment}{}
\newcommand{\bfootnotesize}{\begin{footnotesize}}\newcommand\efootnotesize{\end{footnotesize}}
\newcommand{\bquote}{\begin{quote}}\newcommand\equote{\end{quote}}
\newcommand{\bsmall}{\begin{small}}\newcommand\esmall{\end{small}}
\newcommand{\btable}{\begin{table}}\newcommand{\etable}{\end{table}}
\newcommand{\edocument}{
\begin{document}

\title{The $p$-cohomology of algebraic varieties and special values of zeta functions }
\author{James S. Milne
\and Niranjan Ramachandran\thanks{Partly supported by NSF and Graduate Research
Board (UMD)}}
\date{October 16, 2013}
\maketitle

\begin{abstract}
The $p$-cohomology of an algebraic variety in characteristic $p$ lies
naturally in the category $D_{c}^{b}(R)$ of coherent complexes of graded
modules over the Raynaud ring (Ekedahl-Illusie-Raynaud). We study homological
algebra in this category. When the base field is finite, our results provide
relations between the the absolute cohomology groups of algebraic varieties,
log varieties, algebraic stacks, etc. and the special values of their zeta
functions. These results provide compelling evidence that $D_{c}^{b}(R)$ is
the correct target for $p$-cohomology in characteristic $p$.

\end{abstract}
\tableofcontents

\subsection{Introduction}

Each of the usual cohomology theories $X\rightsquigarrow H^{j}(X,r)$ on
algebraic varieties arises from a functor $R\Gamma$ taking values in a
triangulated category $\mathsf{D}$ equipped with a $t$-structure and a Tate
twist $N\rightsquigarrow N(r)$. The heart of $\mathsf{D}$ has a tensor
structure and, in particular, an identity object $\1$. The cohomology theory
satisfies
\begin{equation}
H^{j}(X,r)\simeq H^{j}(R\Gamma(X)(r))\text{,} \label{eq2}%
\end{equation}
and there is an absolute cohomology theory%
\begin{equation}
H_{\mathrm{abs}}^{j}(X,r)\simeq\Hom_{\mathsf{D}(k)}(\1,R\Gamma(X)(r)[j]).
\label{eq1}%
\end{equation}
(see, for example, \cite{deligne1994}, \S 3).

Let $k$ be a base field of characteristic $p$. For the $\ell$-adic \'{e}tale
cohomology, $\mathsf{D}$ is the category of bounded constructible
$\mathbb{Z}{}_{\ell}$-complexes (\cite{ekedahl1990}). For the $p$-cohomology,
it is the category $\mathsf{D}_{c}^{b}(R)$ of coherent complexes of graded
modules over the Raynaud ring. This category was defined in
\cite{illusieR1983}, and its properties were developed in \cite{ekedahl1984,
ekedahl1985, ekedahl1986}. We study homological algebra in this category and,
when $k$ is finite, we prove relations between $\Ext$s and zeta functions.

Let $k=\mathbb{F}{}_{q}$ with $q=p^{a}$. The $\Ext$ of two objects $M$, $N$ of
$\mathsf{D}_{c}^{b}(R)$ is defined by the usual formula%
\[
\Ext^{j}(M,N)=\Hom_{\mathsf{D}_{c}^{b}(R)}(M,N[j]).
\]
Using that $k$ is finite, we construct a canonical complex%
\[
E(M,N)\colon\quad\cdots\rightarrow\Ext^{j-1}(M,N)\rightarrow\Ext^{j}%
(M,N)\rightarrow\Ext^{j+1}(M,N)\rightarrow\cdots
\]
of abelian groups for each pair $M,N$ in $\mathsf{D}_{c}^{b}(R)$.

An object $P$ of $\mathsf{D}_{c}^{b}(R)$ can be regarded as a double complex
of $W_{\sigma}[F,V]$-modules. On tensoring $P$ with $\mathbb{Q}{}$ and forming
the associated simple complex, we obtain a bounded complex $sP_{\mathbb{Q}{}}$
whose cohomology groups $H^{j}(sP_{\mathbb{Q}{}})$ are $F$-isocrystals over
$k$. We define the zeta function $Z(P,t)$ of $P$ to be the alternating product
of the characteristic polynomials of $F^{a}$ acting on these $F$-isocrystals.
It lies in $\mathbb{Q}{}_{p}(t)$.

Attached to each $P$ in $\mathsf{D}_{c}^{b}(R)$, there is a bounded complex
$R_{1}\otimes_{R}^{L}P$ of graded $k$-vector spaces whose cohomology groups
have finite dimension. The Hodge numbers $h^{i,j}(P)$ of $P$ are defined to be
the dimensions of the $k$-vector spaces $H^{j}(R_{1}\otimes_{R}^{L}P)^{i}$.

Finally, we let $R\underline{\Hom}(-,-)$ denote the internal Hom in
$\mathsf{D}_{c}^{b}(R)$.

\begin{theorem}
\label{a0}Let $M,N\in\mathsf{D}_{c}^{b}(R)$ and let $P=R\underline{\Hom}%
(M,N)$. Let $r\in\mathbb{Z}{}$, and assume that $q^{r}$ is not a multiple root
of the minimum polynomial of $F^{a}$ acting on $H^{j}(sP_{\mathbb{Q}{}})$ for
any integer $j$.

\begin{enumerate}
\item The groups $\Ext^{j}(M,N(r))$ are finitely generated $\mathbb{Z}{}_{p}%
$-modules, and the alternating sum of their ranks is zero.

\item The zeta function $Z(P,t)$ of $P$ has a pole at $t=q^{-r}$ of order%
\[
\rho=\sum\nolimits_{j}(-1)^{j+1}\cdot j\cdot\rank_{\mathbb{Z}{}}%
(\Ext^{j}(M,N(r))).
\]

\item The cohomology groups of the complex $E(M,N(r))$ are finite, and the
alternating product of their orders $\chi(M,N(r))$ satisfies%
\[
\left\vert \lim_{t\rightarrow q^{-r}}Z(P,t)\cdot(1-q^{r}t)^{\rho}\right\vert
_{p}^{-1}=\chi(M,N(r))\cdot q^{\chi(P,r)}%
\]
where%
\[
\chi(P,r)=\sum_{i,j\,\,(i\leq r)}(-1)^{i+j}(r-i)\cdot h^{i,j}(P).
\]

\end{enumerate}
\end{theorem}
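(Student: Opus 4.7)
The plan is to reduce the whole statement to a calculation on the single object $P$, then exploit that $k=\mathbb{F}_q$ is finite by inserting a ``Frobenius fixed-point'' long exact sequence that expresses $\Ext^j(\1,P(r))$ in terms of the action of $F^a-q^r$ on the cohomology of $sP_\mathbb{Q}$. The adjunction defining $R\underline{\Hom}$ gives
$$\Ext^j(M,N(r))\simeq\Hom_{\mathsf{D}_c^b(R)}(\1,P(r)[j]),$$
and the construction of the complex $E$ should commute with this identification, so throughout I replace the pair $(M,N)$ by $(\1,P)$.

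Because $k$ is finite, I expect a long exact sequence of the form
$$\cdots\to H^{j-1}(sP_\mathbb{Q})/(F^a-q^r)\to \Ext^j(\1,P(r))\otimes\mathbb{Q}_p\to H^j(sP_\mathbb{Q})^{F^a=q^r}\to\cdots$$
coming from a distinguished triangle that computes $\Hom(\1,-)$ as the cone of $F^a-q^r$, together with an integral refinement that carries the torsion. Under the simplicity hypothesis, the kernel of $F^a-q^r$ on $H^j(sP_\mathbb{Q})$ coincides with the generalized $q^r$-eigenspace and has dimension equal to the multiplicity $m_j$ of $q^r$ as a root of the characteristic polynomial of $F^a$; the cokernel has the same dimension.

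For part (a), the finiteness properties of objects of $\mathsf{D}_c^b(R)$ combined with the long exact sequence give finite $\mathbb{Z}_p$-generation together with $\rank\Ext^j(\1,P(r))=m_{j-1}+m_j$, and the alternating sum telescopes to zero. For part (b), write $Z(P,t)=\prod_j P_j(t)^{(-1)^{j+1}}$ with $P_j(t)=\det(1-F^at\mid H^j(sP_\mathbb{Q}))$, so the pole order at $t=q^{-r}$ is $\sum_j(-1)^j m_j$. The identity
$$\sum_j(-1)^{j+1}j(m_{j-1}+m_j)=\sum_j m_j\bigl[(-1)^j(j+1)+(-1)^{j+1}j\bigr]=\sum_j(-1)^j m_j$$
matches this with the formula in the statement.

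For part (c), compute $|\lim_{t\to q^{-r}}Z(P,t)(1-q^rt)^\rho|_p^{-1}$ as a product over $j$ of $|P_j^*(q^{-r})|_p^{(-1)^{j+1}}$, where $P_j^*$ is $P_j$ with the $q^r$-eigenvalue factor removed. Three sources of $p$-adic contribution appear: the torsion in the integral cohomology of $P(r)$, which produces the finite groups computing $\chi(M,N(r))$ via the $E$-complex; the non-$q^r$ eigenvalue factors $|\alpha-q^r|_p$, which assemble into a determinant of the twisted Frobenius on a complement; and the $p$-adic valuations of the determinants $\det(F^a\mid H^j(sP_\mathbb{Q}))$, which by the Mazur--Ogus--Illusie--Raynaud identification of the Newton slopes with a weighted sum of Hodge numbers of $R_1\otimes_R^L P$ produce the Hodge polygon contribution. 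A final alternating-sum manipulation in $(i,j)$, analogous to the one used for the pole order, together with the cutoff $i\le r$ separating negative from non-negative twisted slopes, collapses this contribution to precisely $\chi(P,r)$. The hard part will be this last step: controlling the integral Newton--Hodge comparison with enough precision to isolate $q^{\chi(P,r)}$ exactly rather than up to a $p$-adic unit, and verifying that the cross-terms between the torsion contribution to $\chi(M,N(r))$ and the unit contribution from non-$q^r$ eigenvalues cancel cleanly once the alternating sum is taken.
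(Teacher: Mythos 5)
Your reduction to $(\1,P)$ via the adjunction $\Ext^{j}(M,N(r))\simeq\Hom(\1,P(r)[j])$ and the Frobenius--fixed-point strategy are exactly the paper's, and the telescoping arguments for parts (a) and (b) are correct. The gap is in part (c), where your three ``sources of $p$-adic contribution'' omit a fourth one that is essential and genuinely integral. The paper realizes $\Ext^{j}(\bar{M},\bar{N}(r))$ as $G^{j}(\bar{k})$ for a perfect pro-group scheme $G^{j}=\mathcal{E}{}xt^{j}(M,N(r))$ sitting in an exact sequence $0\to U^{j}\to G^{j}\to D^{j}\to 0$, with $U^{j}$ connected unipotent of dimension $T^{r-1,j-r}(P)$ (a domino invariant of $P$) and $D^{j}$ pro-\'etale. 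On descending from $\bar{k}$ to $k=\mathbb{F}_{q}$, the unipotent part contributes a factor $[U^{j}(k)]=q^{T^{r-1,j-r}(P)}$ to each local term $z(f^{j})$. This factor is invisible in your rational long exact sequence --- $U^{j}(\bar{k})$ is a $\bar{k}$-vector space, hence $p$-torsion, so it vanishes after $\otimes\mathbb{Q}_{p}$ --- and it is not ``torsion in the integral cohomology of $P(r)$'' in the ordinary sense; it reflects the failure of the slope spectral sequence to degenerate integrally at $E_{1}$. Without the perfect-group-scheme structure your proposal has no place to record this contribution.

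Moreover, the step you flag as the hard one is misdiagnosed. The residual power of $q$ is not produced by a Newton--Hodge (Mazur--Ogus) comparison. The paper's route is: compute $\prod_{j}z(f^{j})^{(-1)^{j}}$ and obtain an exponent $-e_{r}(P)$, where $e_{r}(P)=\sum_{j}(-1)^{j-1}T^{r-1,j-r}(P)+\sum_{i,j,l}(-1)^{i+j}(r-i-\lambda_{i,j,l})$ mixes slopes \emph{and} domino numbers; identify $e_{r}(P)$ with the weighted Hodge--Witt Euler characteristic $\sum_{i\leq r,j}(-1)^{i+j}(r-i)h_{W}^{i,j}(P)$ by Theorem~\ref{b5}; and only then invoke Ekedahl's theorem (Theorem~\ref{b5a}) that $\sum_{j}(-1)^{j}h_{W}^{i,j}(P)=\sum_{j}(-1)^{j}h^{i,j}(P)$ to replace Hodge--Witt numbers by Hodge numbers, yielding $\chi(P,r)$. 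The domino terms arising from the unipotent parts of $z(f^{j})$ cancel exactly against the $T$-terms built into $e_{r}(P)$. Your sketch carries no $T^{i,j}$-bookkeeping and makes no appeal to the Hodge versus Hodge--Witt comparison, so the ``final alternating-sum manipulation'' you describe cannot close; at best it would produce a formula with unmatched $T^{i,j}$-contributions.
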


\noindent Here $|\cdot|_{p}$ is the $p$-adic valuation, normalized so that
$|p^{r}\frac{m}{n}|_{p}^{-1}=p^{r}$ if $m$ and $n$ are prime to $p$.

We identify the identity object of $\mathsf{D}_{c}^{b}(R)$ with the ring $W$
of Witt vectors. Then $R\underline{\Hom}(W,N)\simeq N$.

Each algebraic variety (or log variety or stack) defines several objects in
$\mathsf{D}_{c}^{b}(R)$ (see \S 6). Let $M(X)$ be one of the objects of
$\mathsf{D}_{c}^{b}(R)$ attached to an algebraic variety $X$ over $k$, and
define the absolute cohomology of $X$ to be%
\[
H_{\mathrm{abs}}^{j}(X,\mathbb{Z}{}_{p}(r))=\Hom_{\mathsf{D}_{c}^{b}%
(R)}(W,M(X)(r)[j]).
\]
The complex $E(W,M(X)(r))$ becomes,%
\[
E(X,r)\colon\quad\cdots\rightarrow H_{\mathrm{abs}}^{j-1}(X,\mathbb{Z}{}%
_{p}(r))\rightarrow H_{\mathrm{abs}}^{j}(X,\mathbb{Z}{}_{p}(r))\rightarrow
H_{\mathrm{abs}}^{j+1}(X,\mathbb{Z}{}_{p}(r))\rightarrow\cdots.
\]

\begin{theorem}
\label{b04}Assume that $q^{r}$ is not a multiple root of the minimum
polynomial of $F^{a}$ acting on $H^{j}(sM(X)_{\mathbb{Q}{}})$ for any $j$.

\begin{enumerate}
\item The groups $H_{\mathrm{abs}}^{j}(X,\mathbb{Z}_{p}(r))$ are finitely
generated $\mathbb{Z}{}_{p}$-modules, and the alternating sum of their ranks
is zero.

\item The zeta function $Z(M(X),t)$ of $M(X)$ has a pole at $t=q^{-r}$ of
order%
\[
\rho=\sum\nolimits_{j}(-1)^{j+1}\cdot j\cdot\rank_{\mathbb{Z}{}_{p}}\left(
H_{\mathrm{abs}}^{j}(X,\mathbb{Z}{}_{p}(r))\right)  \text{.}%
\]

\item The cohomology groups of the complex $E(X,r)$ are finite, and the
alternating product of their orders $\chi(X,\mathbb{Z}{}_{p}(r))$ satisfies%
\[
\left\vert \lim_{t\rightarrow q^{-r}}Z(M(X),t)\cdot(1-q^{r}t)^{\rho
}\right\vert _{p}^{-1}=\chi(X,\mathbb{Z}{}_{p}(r))\cdot q^{\chi(M(X),r)}.
\]

\end{enumerate}
\end{theorem}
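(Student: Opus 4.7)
The plan is to deduce Theorem \ref{b04} from Theorem \ref{a0} by specializing to $M = W$ and $N = M(X)$. As noted in the text between the two theorems, $W$ is the identity object of $\mathsf{D}_c^b(R)$, and $R\underline{\Hom}(W,N) \simeq N$; hence in this situation $P = R\underline{\Hom}(M,N) \simeq M(X)$. The hypothesis in Theorem \ref{a0} on the minimum polynomial of $F^a$ acting on $H^j(sP_{\mathbb{Q}})$ therefore translates verbatim into the hypothesis of Theorem \ref{b04} on $H^j(sM(X)_{\mathbb{Q}})$.

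Once this identification is in hand, the three conclusions of Theorem \ref{b04} fall out of the corresponding conclusions of Theorem \ref{a0} by routine translation through definitions. The group $\Ext^j(W, M(X)(r)) = \Hom_{\mathsf{D}_c^b(R)}(W, M(X)(r)[j])$ is by definition $H_{\mathrm{abs}}^j(X, \mathbb{Z}_p(r))$; the complex $E(W, M(X)(r))$ is by definition $E(X, r)$; the zeta function $Z(P, t)$ becomes $Z(M(X), t)$; and the Hodge numbers satisfy $h^{i,j}(P) = h^{i,j}(M(X))$, so $\chi(P, r) = \chi(M(X), r)$. Substituting these into parts (a), (b), (c) of Theorem \ref{a0} yields parts (a), (b), (c) of Theorem \ref{b04}.

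The step requiring genuine verification is that $W$ really does serve as the tensor unit of $\mathsf{D}_c^b(R)$ and that the isomorphism $R\underline{\Hom}(W, N) \simeq N$ holds; both should be formal consequences of the monoidal structure on $\mathsf{D}_c^b(R)$ together with the standard adjunction between the internal $\Hom$ and the tensor product with the unit. A more substantive issue, which I expect is addressed elsewhere in the paper (presumably in \S6), is to show that the various assignments $X \rightsquigarrow M(X)$ attached to varieties, log varieties, and stacks actually land in $\mathsf{D}_c^b(R)$ so that Theorem \ref{a0} is applicable. Granted those input facts, the present theorem is essentially a change of notation rather than a new assertion.
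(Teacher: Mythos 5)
Your proposal is correct and is exactly the intended derivation: the paper itself does not give a separate proof of Theorem \ref{b04}, since the text immediately preceding it sets up precisely the identifications you describe ($P = R\underline{\Hom}(W, M(X)) \simeq M(X)$ via (\ref{eq28c}), $\Ext^j(W, M(X)(r)) = H_{\mathrm{abs}}^j(X,\mathbb{Z}_p(r))$, and $E(W, M(X)(r)) = E(X,r)$) so that the theorem follows from Theorem \ref{a0} by substitution. The one point you flag as needing verification, that the various $M(X)$ actually lie in $\mathsf{D}_c^b(R)$, is indeed handled in \S 6 for each class of input (smooth complete varieties via Illusie--Raynaud, log varieties via Lorenzon, stacks via Olsson, etc.).
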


Let $X$ be a smooth projective variety over $k$, and let $M(X)=R\Gamma
(X,W\Omega_{X}^{\bullet})$. Then $H^{j}(sM(X)_{\mathbb{Q}{}})=H_{\mathrm{crys}%
}^{j}(X/W)_{\mathbb{Q}{}}$ and $H_{\mathrm{abs}}^{j}(X,\mathbb{Z}_{p}(r))$ is
the group $H^{j}(X,\mathbb{Z}{}_{p}(r))$ defined in (\ref{n7}) below.
Moreover, the zeta function and the Hodge numbers of $M(X)$ agree with those
of $X$, and so, in this case, Theorem \ref{b04} becomes the $p$-part of the
main theorem of \cite{milne1986}. See p.\pageref{eq65} below.

\subsection{Remarks}

\begin{plain}
\label{a03}Let $\zeta(P,s)=Z(P,q^{-s}),$ $s\in\mathbb{C}$. Then $\rho$ is the
order of the pole of $\zeta(P,s)$ at $s=r$, and
\[
\lim_{t\rightarrow q^{-r}}Z(P,t)\cdot(1-q^{r}t)^{\rho}=\lim_{s\rightarrow
r}\zeta(P,s)\cdot(s-r)^{\rho}\cdot(\log q)^{\rho}\text{.}%
\]

\end{plain}

\begin{plain}
\label{a04}We expect that the $F$-isocrystals $H^{j}(sP_{\mathbb{Q}{}})$ are
always semisimple (so $F^{a}$ always acts semisimply) when $P$ arises from
algebraic geometry. If this fails, there will be spurious extensions over
$\mathbb{Q}{}$ that will have to be incorporated into the statement of
(\ref{a0}).
\end{plain}

\begin{plain}
\label{a04a}The statement of Theorem 0.1 depends only on $\mathsf{D}_{c}%
^{b}(R)$ as a triangulated category with a dg-lifting.
\end{plain}

\begin{plain}
\label{a05}We leave it as an (easy) exercise for the reader to prove the
analogue of (\ref{a0}) for $\ell\neq p$ (the indolent may refer to article below).
\end{plain}

\begin{plain}
\label{a06}In a second article, we apply (\ref{a0}) to study the analogous
statement in a triangulated category of motivic complexes (\cite{milneR2013b}).
\end{plain}

\subsection{Outline of the article}

In \S 1 and \S 3 we review some of the basic theory of the category
$\mathsf{D}_{c}^{b}(R)$ (Ekedahl, Illusie, Raynaud), and in \S 2 we prove a
relation between the numerical invariants of an object of $\mathsf{D}_{c}%
^{b}(R)$. In \S 4 begin the study of the homological algebra of $\mathsf{D}%
_{c}^{b}(R)$, and in \S 5 we take the ground field to be finite and prove
Theorem \ref{a0}. In the final section we study applications of Theorem
\ref{a0} to algebraic varieties.

\subsection{Notations}

Throughout, $k$ is a perfect field of characteristic $p\neq0$, and $W$ is the
ring of Witt vectors over $k$. As usual, $\sigma$ denotes the automorphism of
$W$ inducing $a\mapsto a^{p}$ modulo $p$. We use a bar to denote base change
to an algebraic closure $\bar{k}$ of $k$. For example, $\bar{W}$ denotes the
Witt vectors over $\bar{k}$. We use $\simeq$ to denote a canonical, or
specific, isomorphism.

\section{Coherent complexes of graded $R$-modules}

In this section, we review some definitions and results of Ekedahl, Illusie,
and Raynaud, for which \cite{illusie1983} is a convenient reference.

\begin{plain}
\label{a2}The \emph{Raynaud ring} is the graded $W$-algebra $R=R^{0}\oplus
R^{1}$ generated by $F$ and $V$ in degree $0$ and $d$ in degree $1$, subject
to the relations
\begin{align}
FV  &  =p=VF,\quad Fa=\sigma a\cdot F,\quad aV=V\cdot\sigma a,\label{eq5}\\
d^{2}  &  =0,\quad FdV=d,\quad ad=da\quad(a\in W). \label{eq6}%
\end{align}
In other words, $R^{0}$ is the Dieudonn\'{e} ring $W_{\sigma}[F,V]$ and $R$ is
generated as an $R^{0}$-algebra by a single element $d$ of degree $1$
satisfying (\ref{eq6}). For $m\geq1,$%
\begin{equation}
R_{m}\overset{\textup{{\tiny def}}}{=}R/(V^{m}R+dV^{m}R)\text{.} \label{eq42}%
\end{equation}

\end{plain}

\begin{plain}
\label{a3}To give a graded $R$-module $M=\bigoplus\nolimits_{i\in\mathbb{Z}{}%
}M^{i}$ is the same as giving a complex%
\[
M^{\bullet}\colon\quad\cdots\rightarrow M^{i-1}\overset{d}{\longrightarrow
}M^{i}\overset{d}{\longrightarrow}M^{i+1}\rightarrow\cdots
\]
of $W$-modules whose components $M^{i}$ are $R^{0}$-modules and whose
differentials $d$ satisfy $FdV=d$. For $n\in\mathbb{Z}{}$, $M\{n\}$ is the
graded $R$-module deduced from $M$ by a shift of degree,\footnote{Illusie et
al.\ write $M(n)$ for the degree shift of $M$, but this conflicts with our
notation for Tate twists.} i.e., $M\{n\}^{i}=M^{n+i}$ and $d_{M\{n\}}%
^{i}=(-1)^{n}d_{M}^{n+i}$. The graded $R$-modules and graded homomorphisms of
degree $0$ form an abelian category $\Mod(R)$ with derived category
$\mathsf{D}(R)$. The bifunctor $M,N\rightsquigarrow\Hom(M,N)$ of graded
$R$-modules derives to a bifunctor%
\[
R\Hom\colon\mathsf{D}(R)^{\mathrm{opp}}\times\mathsf{D}^{+}(R)\rightarrow
\mathsf{D}(\mathbb{Z}{}_{p})
\]
(denoted by $R\Hom_{R}$ in \cite{illusie1983}, 2.6.2, and \cite{ekedahl1986},
p.8, and by $R\underline{\Hom}_{R}$ in \cite{ekedahl1985}, p.73).
\end{plain}

\begin{plain}
\label{a4}A graded $R$-module is said to be \emph{elementary}
(\cite{illusie1983}, 2.2.2, p.30) if it is one of the following two types.

\begin{description}
\item[Type I] The module is concentrated in degree zero, finitely generated
over $W$, and $V$ is topologically nilpotent on it. In other words, it is a
$W_{\sigma}[F,V]$-module whose $p$-torsion submodule has finite length over
$W$, and whose torsion-free quotient is finitely generated and free over $W$
with slopes lying in the interval $[0,1[$.

\item[Type II] The module is isomorphic to%
\[
U_{l}\colon\quad\underset{\vphantom{A^{A^A}}\deg0}{\prod\nolimits_{n\geq
0}kV^{n}}\overset{d}{\longrightarrow}\underset{\vphantom{A^{A^A}}\deg
1}{\prod\nolimits_{n\geq l}kdV^{n}}%
\]
for some $l\in\mathbb{Z}$. Here $F$ (resp. $V)$ acts as zero on $U_{l}^{0}$
(resp. $U_{l}^{1}$), and $dV^{n}$ should be interpreted as $F^{-n}d$ when
$n<0$. In more detail, $U_{l}^{0}$ is the $W_{\sigma}[F,V]$-module $k[[V]]$
with $F$ acting as zero. When $l\geq0$, $U_{l}^{1}$ consists of the formal
sums%
\[
a_{l}dV^{l}+a_{l+1}dV^{l+1}+\cdots\quad\quad(a_{l}\in k),
\]
and when $l<0$, $U_{l}^{1}$ consists of the formal sums
\[
a_{-l}F^{-l}d+\cdots+a_{-1}F^{-1}d+a_{0}d+a_{1}dV+a_{2}dV^{2}+\cdots\quad
\quad(a_{l}\in k).
\]

\end{description}
\end{plain}

\begin{plain}
\label{a5}A graded $R$-module $M$ is said to be \emph{coherent} if it admits a
finite filtration $M\supset\cdots\supset0$ whose quotients are degree shifts
of elementary modules (i.e., of the form $M\{n\}$ with $M$ elementary and
$n\in\mathbb{Z}{}$). Coherent $R$-modules need not be noetherian or
artinian---the object $U_{0}$ is obviously neither.
\end{plain}

\begin{plain}
\label{a6}A complex $M$ of $R$-modules is said to be \emph{coherent} if it is
bounded with coherent cohomology. Let $\mathsf{D}_{c}^{b}(R)$ denote the full
subcategory of $\mathsf{D}(R)$ consisting of coherent complexes. Ekedahl has
given a criterion for a complex to lie in $\mathsf{D}_{c}^{b}(R)$, from which
it follows that $\mathsf{D}_{c}^{b}(R)$ is a triangulated subcategory of
$\mathsf{D}(R)$; in particular, the coherent modules form an abelian
subcategory of $\Mod(R)$ closed under extensions (\cite{illusie1983}, 2.4.8).
In more detail (ibid. 2.4), define a graded $R_{\bullet}$-module to be a
projective system%
\[
M_{\bullet}=\left(  M_{1}\leftarrow\cdots\leftarrow M_{m}\leftarrow
M_{m+1}\leftarrow\cdots\right)
\]
equipped with maps $F\colon M_{m+1}\rightarrow M_{m}$ and $V\colon
M_{m}\rightarrow M_{m+1}$ of degree zero satisfying (\ref{eq5}) and
(\ref{eq6}); here $M_{m}$ is a graded $W_{m}[d]$-module. The graded
$R_{\bullet}$-modules form an abelian category. The functor $M_{\bullet
}\rightsquigarrow\varprojlim M_{m}\colon\Mod(R_{\bullet})\rightarrow\Mod(R)$
derives to a functor%
\[
R\varprojlim\colon\mathsf{D}(R_{\bullet})\rightarrow\mathsf{D}(R).
\]
On the other hand, the functor sending a graded $R$-module $M$ to the
$R_{\bullet}$-module $(R_{m}\otimes_{R}M)_{m\geq1}$ derives to a functor%
\[
R_{\bullet}\otimes_{R}^{L}-\colon\mathsf{D}(R)\rightarrow\mathsf{D}%
(R_{\bullet})
\]
These functors compose to a functor%
\[
M\rightsquigarrow\hat{M}\colon\mathsf{D}(R)\rightarrow\mathsf{D}(R).
\]
For $M$ in $\mathsf{D}^{-}(R)$, there is a natural map $M\rightarrow\hat{M}$
inducing isomorphisms $R_{m}\otimes_{R}^{L}M\rightarrow R_{m}\otimes_{R}%
^{L}\hat{M}$ for all $m$, and $M$ is said to be \emph{complete} if this map is
an isomorphism. Ekedahl's criterion states:\bquote A bounded complex of graded
$R$-modules $M$ lies in $\mathsf{D}_{c}^{b}(R)$ if and only if $M$ is complete
and $R_{1}\otimes_{R}^{L}M$ is a bounded complex such that $H^{i}(R_{1}%
\otimes_{R}^{L}M)$ is finite-dimensional over $k$ for all $i$.\equote

\end{plain}

\begin{plain}
\label{a8}Let $T$ be the functor of graded $R$-modules such that
$(TM)^{i}=M^{i+1}$ and $T(d)=-d$, i.e., $TM=M\{1\}$ (degree shift). It is
exact and defines a self-equivalence $T\colon\mathsf{D}_{c}^{b}(R)\rightarrow
\mathsf{D}_{c}^{b}(R)$. The \emph{Tate twist} of a coherent complex of graded
$R$-modules $M$ is defined as
\[
M(r)=T^{r}(M)[-r]=M\{r\}[-r];
\]
thus $M(r)^{i,j}=M^{i+r,i-r}$ (cf. \cite{milneR2005}, \S 2).
\end{plain}

\begin{plain}
\label{a9}Following \cite{illusie1983}, 2.1, we view a complex of graded
$R$-modules
\[
M\colon\quad\quad\cdots\rightarrow M^{\bullet,j}\rightarrow M^{\bullet
,j+1}\rightarrow\cdots
\]
as a bicomplex $M^{\bullet,\bullet}$ of $R^{0}$-modules in which the first
index corresponds to the $R$-gradation. Thus the $j$th row $M^{\bullet,j}$ of
the bicomplex is a graded $R$-module and the $i$th column $M^{i\bullet}$ is a
complex of $R^{0}$-modules:%
\begin{equation}
\begin{tikzcd}[column sep=scriptsize, row sep=scriptsize] \vdots & & &\vdots & \vdots & \vdots\\ M^{\bullet j+1}\arrow{u}: && \cdots \arrow{r} & M^{i-1,j+1}\arrow{r}{d}\arrow{u} & M^{i,j+1} \arrow{r}{d}\arrow{u} & M^{i+1,j+1} \arrow{r}\arrow{u} & \cdots\\ M^{\bullet j}\arrow{u}: && \cdots \arrow{r} & M^{i-1,j}\arrow{r}{d}\arrow{u} & M^{i,j} \arrow{r}{d}\arrow{u} & M^{i+1,j} \arrow{r}\arrow{u} & \cdots\\ \vdots\arrow{u} & & & \vdots\arrow{u} &\vdots\arrow{u} & \vdots\arrow{u} & \end{tikzcd}\label{e1}%
\end{equation}
In this diagram, the squares commute, the vertical differentials commute with
$F$ and $V$, and the horizontal differentials satisfy $FdV=d$. The cohomology
modules of $M$ are obtained by passing to the cohomology in the columns:%
\[
H^{j}(M)\colon\quad\quad\cdots\rightarrow H^{j}(M^{i-1,\bullet}%
)\overset{d}{\longrightarrow}H^{j}(M^{i,\bullet})\overset{d}{\longrightarrow
}H^{j}(M^{i+1,\bullet})\rightarrow\cdots.
\]
In other words, for a complex $M=M^{\bullet,\bullet}$ of graded $R$-modules,
$H^{j}(M)$ is the graded $R$-module with $H^{j}(M)^{i}=H^{j}(M^{i,\bullet})$.

By definition, $M\{m\}[n]$ is the bicomplex with
\begin{equation}
(M\{m\}[n])^{i,j}=M^{i+m,j+n} \label{e25}%
\end{equation}
and with the appropriate sign changes on the differentials.
\end{plain}

\begin{plain}
\label{a11}With any complex $M$ of graded $R$-modules, there is an associated
simple complex $sM$ of $W$-modules with%
\[
\left(  sM\right)  ^{n}=\bigoplus\nolimits_{i+j=n}M^{i,j},\quad dx^{ij}%
=d^{\prime}x^{ij}+(-1)^{i}d^{\prime\prime}x^{ij}\text{.}%
\]
The functor $s$ extends to a functor $s\colon\mathsf{D}^{+}(R)\rightarrow
\mathsf{D}(W)$. If $M\in\mathsf{D}_{c}^{b}(M)$, then $sM$ is a perfect complex
of $W$-modules (\cite{illusie1983}, p.34).
\end{plain}

\begin{plain}
\label{b0}For a coherent complex $M$ of graded $R$-modules, the filtration of
$sM$ by the first degree defines a spectral sequence%
\begin{equation}
E_{1}^{ij}=H^{j}(M)^{i}\implies H^{i+j}(sM) \label{eq7}%
\end{equation}
called the \emph{slope spectral sequence}. The slope spectral sequence
degenerates at $E_{1}$ modulo torsion and at $E_{2}$ modulo $W$-modules of
finite length. In particular, for $r\geq2$, $E_{r}^{ij}$ is a finitely
generated $W$-module of rank equal to that of $H^{j}(M)^{i}/\mathrm{torsion}%
$\textrm{. }This was proved by \cite{bloch1977} and \cite{illusieR1983} for
the complex $M=R\Gamma(X,W\Omega_{X}^{\bullet})$ attached to a smooth complete
variety $X$, and by Ekedahl for a general $M$ (see \cite{illusie1983}, 2.5.4).
\end{plain}

\begin{plain}
\label{a12}Let $K=W\otimes\mathbb{Q}{}$ (field of fractions of $W$). Then
$K\otimes_{W}W_{\sigma}[F,V]\simeq K_{\sigma}[F]$. Recall that an
$F$-isocrystal is a $K_{\sigma}[F]$-module that is finite-dimensional as a
$K$-vector space and such that $F$ is bijective. The $F$-isocrystals form an
abelian subcategory of $\Mod(K_{\sigma}[F])$ closed under extensions, and so
the subcategory $\mathsf{D}_{\text{\textrm{iso}}}^{b}(K_{\sigma}[F])$ of
$\mathsf{D}^{b}(K_{\sigma}[F])$ consisting of bounded complexes whose
cohomology modules are $F$-isocrystals is triangulated.
\end{plain}

\begin{plain}
\label{a13}Let $M$ be a complex of graded $R$-modules with only nonnegative
first degrees, and let $F^{\prime}$ act on $M^{i,j}$ as $p^{i}F$. The
condition $FdV=d$ implies that $pFd=dF$, and so both differentials in the
diagram (\ref{e1}) commute with the action of $F^{\prime}$. Therefore $s(M)$
is a complex of $W_{\sigma}[F^{\prime}]$-modules. If $M\in\mathsf{D}_{c}%
^{b}(R)$, then $s(M)_{K}$ lies in $\mathsf{D}_{\text{\textrm{iso}}}%
^{b}(K_{\sigma}[F^{\prime}])$.\medskip\ From the degeneration of the slope
spectral sequence at $E_{1}$, we get isomorphisms%
\begin{equation}
(H^{j}(M)_{K}^{i},p^{i}F)\simeq\left(  H^{i+j}(sM)_{K}\right)  _{[i,i+1[}
\label{eq7a}%
\end{equation}
for $M\in\mathsf{D}_{c}^{b}(R)$. This can also be written\footnote{For each
$n$, we have $H^{n}(sM)_{K}=\bigoplus H^{j}(M)_{K}^{i}$ where the sum if over
pairs $(i,j)$ with $i+j=n$. Our assumption on $M$ says that $i\geq0$, and so
only $H^{n}(M)_{K}^{0}$, $H^{n-1}(M)_{K}^{i},$\ldots$,H^{0}(M)_{K}^{n}$
contribute. Each of these (with the map $F$) is an isocrystal with slopes
$[0,1)$. But with the map $p^{i}F$, the slopes of $H^{n-i}(M)_{K}^{i}$ are in
$[i,i+1[$. The slopes of distinct summands to not overlap. Hence we get
(\ref{eq7b}). Cf. \cite{illusie1983}, p.64.}%
\begin{equation}
(H^{n-i}(M)_{K}^{i},p^{i}F)\simeq\left(  H^{n}(sM)_{K}\right)  _{[i,i+1[}%
\text{.} \label{eq7b}%
\end{equation}

\end{plain}

\begin{plain}
\label{a14}A \emph{domino} $N$ is a graded $R$-module that admits a finite
filtration $N\supset\cdots\supset0$ whose quotients are elementary of type II.

Let $N$ be elementary of type II, say $N=U_{l}$. Then $N^{0}=k_{\sigma}[[V]]$,
and so $V\colon N^{0}\rightarrow N^{0}$ is injective with cokernel
$N^{0}/V=k_{\sigma}[[V]]/(V)\simeq k$. Similarly, $F\colon N^{1}\rightarrow
N^{1}$ is surjective with kernel $kdV^{l}$ ($l\geq0$) or $kF^{-l}d$ ($l<0$).

Let $N$ be a domino, and suppose that $N$ admits a filtration of length $l(N)$
with elementary quotients. Induction on $l(N)$ shows that

\begin{enumerate}
\item the map $V\colon N^{0}\rightarrow N^{0}$ is injective with cokernel of
dimension $l(N)$ (as a $k$-vector space) and $F|N^{0}$ is nilpotent;

\item the map $F\colon N^{1}\rightarrow N^{1}$ is surjective with kernel of
dimension $l(N)$ and $V|N^{1}$ is nilpotent.
\end{enumerate}

\noindent Therefore the number of quotients in such a filtration is
independent of the filtration, and equals the common dimension of the
$k$-vector spaces $N^{0}/V$ and of $\Ker\left(  F\colon N^{1}\rightarrow
N^{1}\right)  $. This number is called the \emph{dimension} of $N$.
\end{plain}

\begin{plain}
\label{a17}Let $M$ be a graded $R$-module. Then $Z^{i}%
(M)\overset{\textup{{\tiny def}}}{=}\Ker\left(  d\colon M^{i}\rightarrow
M^{i+1}\right)  $ is stable under $F$ but not in general under $V$, whereas
$B^{i}(M)\overset{\textup{{\tiny def}}}{=}\im(d\colon M^{i-1}\rightarrow
M^{i})$ is stable under $V$ but not in general under $F$. Instead, one puts%
\begin{align*}
V^{-\infty}Z^{i}  &  (M)=\{x\in M^{i}\mid V^{n}x\in Z^{i}(M)\text{ for all
}n\},\\
F^{\infty}B^{i}  &  (M)=\{x\in M^{i}\mid x\in F^{n}B^{i}(M)\text{ for some
}n\}.
\end{align*}
Then $V^{-\infty}Z^{i}$ is the largest $R^{0}$-submodule of $Z^{i}(M),$ and
$F^{\infty}B^{i}$ is the smallest $R^{0}$-submodule of $M^{i}$ containing
$B^{i}M$:%
\begin{equation}
B^{i}\subset F^{\infty}B^{i}\subset V^{-\infty}Z^{i}\subset Z^{i}.
\label{eq58}%
\end{equation}
The homomorphism of $W$-modules $d\colon M^{i}\rightarrow M^{i+1}$ factors as
\begin{equation}
\begin{tikzcd} M^{i} \arrow{r}{d}\arrow{d} &M^{i+1}\\ M^{i}/V^{-\infty}Z^{i} \arrow{r}{d} & F^{\infty}B^{i+1}\arrow{u} \end{tikzcd} \label{e24}%
\end{equation}
When $M$ is coherent, the lower row in (\ref{e24}) is an $R$-module admitting
a finite filtration whose quotients are of the form $U_{l}\{-i\}$; in other
words, $($\textrm{lower\thinspace\thinspace}\textrm{row}$\mathrm{)}\{i\}$ is a
domino (\cite{illusie1983}, 2.5.2).
\end{plain}

\begin{plain}
\label{a18}The \emph{heart } of a graded $R$-module $M$ is the graded $R_{0}%
$-module $\heartsuit(M)=\bigoplus\heartsuit^{i}(M)$ with $\heartsuit
^{i}(M)=V^{-\infty}Z^{i}/F^{\infty}B^{i}$ (see (\ref{eq58})). When $M$ is
coherent, $\heartsuit(M)$ is finitely generated as a $W$-module; moreover,
$Z^{i}/V^{-\infty}Z^{i}$ and $F^{\infty}B^{i}/B^{i}$ are of finite length, and
so%
\[
\heartsuit^{i}(M)_{K}\simeq\left(  Z^{i}(M)/B^{i}(M)\right)  _{K}%
\]
(\cite{illusie1983}, 2.5.3).
\end{plain}

\begin{example}
\label{b1}Let $X$ be a smooth variety over a perfect field $k$. The de
Rham-Witt complex%
\[
W\Omega_{X}^{\bullet}\colon\quad W\mathcal{O}{}_{X}\longrightarrow
\cdots\longrightarrow W\Omega_{X}^{i}\overset{d}{\longrightarrow}W\Omega
_{X}^{i+1}\longrightarrow\cdots
\]
is a sheaf of graded $R$-modules on $X$ for the Zariski topology. On applying
$R\Gamma$ to this complex, we get a complex $R\Gamma(X,W\Omega_{X}^{\bullet})$
of graded $R$-modules, which we regard as a bicomplex with $(i,j)$th term
$R\Gamma(X,W\Omega_{X}^{i})^{j}$. When we replace each vertical complex with
its cohomology, the $j$th row of the bicomplex becomes%
\[
R^{j}\Gamma(X,W\Omega_{X}^{\bullet})\colon\quad H^{j}(X,W\mathcal{O}{}%
_{X})\rightarrow\cdots\rightarrow H^{j}(X,W\Omega_{X}^{i}%
)\overset{d}{\rightarrow}H^{j}(X,W\Omega_{X}^{i+1})\rightarrow\cdots.
\]
The complex $R\Gamma(X,W\Omega_{X}^{\bullet})$ is bounded and complete
(\cite{illusie1983}, 2.4), and becomes $R\Gamma(X,\Omega_{X}^{\bullet})$ when
tensored with $R_{1}$, and so $R\Gamma(X,W\Omega_{X}^{\bullet})$ is coherent
when $X$ is complete. In this case, $R\Gamma(X/W)\overset{\textup{{\tiny def}%
}}{=}s(R\Gamma(X,W\Omega_{X}^{\bullet}))$ is a perfect complex of $W$-modules
such that
\[
H^{j}(R\Gamma(X/W))\simeq H_{\mathrm{crys}}^{j}(X/W)\quad(\text{isomorphism of
}W_{\sigma}[F]\text{-modules})
\]
(ibid. 1.3.5), and the slope spectral sequence (\ref{eq7}) becomes%
\begin{equation}
E_{1}^{ij}=H^{j}(X,W\Omega_{X}^{i})\implies H^{i+j}(X,W\Omega_{X}^{\bullet
})\quad\quad(\simeq H_{\mathrm{crys}}^{\ast}(X/W)\text{.} \label{eq33}%
\end{equation}

\end{example}

\section{The numerical invariants of a coherent complex}

\subsection{Definition of the invariants}

Let $M$ be a coherent graded $R$-module. The dimension of the domino attached
to $d\colon M^{i}\rightarrow M^{i+1}$ (see (\ref{a17})) is denoted by
$T^{i}(M)$. It is equal to the number of quotients of the form $U_{l}\{-i\}$
(varying $l$) in a filtration of $M$ with elementary quotients.

\begin{lemma}
\label{a1a}For a coherent graded $R$-module,
\begin{equation}
T^{i}(M)=\mathrm{length}_{W((V))}W((V))\otimes_{W[[V]]}M^{i}\text{.}
\label{eq24}%
\end{equation}

\end{lemma}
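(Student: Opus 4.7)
The plan is to verify that both sides of (\ref{eq24}) are additive on short exact sequences of coherent graded $R$-modules, and then to reduce to the case of elementary modules (possibly with degree shifts) via the filtration supplied by (\ref{a5}).

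For the right-hand side, the embedding $W[[V]] \hookrightarrow W((V))$ is a localization obtained by inverting $V$, hence flat; it therefore preserves short exact sequences, and length is additive over the local ring $W((V))$. For the left-hand side, I use the characterization of $T^i(M)$ as the number of elementary-type-II quotients isomorphic to $U_l\{-i\}$ for some $l$ appearing in any elementary filtration of $M$ (stated just before the lemma); this count is independent of the filtration and is additive when filtrations of a subobject and of a quotient are concatenated. Moreover, both sides are compatible with degree shifts: $T^i(N\{n\}) = T^{i+n}(N)$ and $(N\{n\})^i = N^{i+n}$.

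It remains to verify the formula on elementary modules themselves. For $N = U_l$ of type II, one has $T^0(N) = 1$ and $T^i(N) = 0$ for $i \neq 0$. The right-hand side agrees: $U_l^0 = k[[V]] = W[[V]]/pW[[V]]$, so $W((V)) \otimes_{W[[V]]} k[[V]] = k((V))$, which has length one over the discrete valuation ring $W((V))$ (uniformizer $p$, residue field $k((V))$), while $V$ acts nilpotently on $U_l^1$, so the localization vanishes there. For an elementary module $N$ of type I, concentrated in degree zero, $T^i(N) = 0$ for all $i$, and the task is to show $W((V)) \otimes_{W[[V]]} N^0 = 0$. On the $p$-torsion subgroup, topological nilpotence of $V$ forces honest nilpotence (any descending chain of $V$-stable submodules of a finite-length $W$-module stabilizes), so the localization kills it.

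The main obstacle, and the only nontrivial step, is the vanishing on the torsion-free quotient of a type I module $N$. Its $F$-slopes lie in $[0,1[$, so the $V$-slopes (coming from $V = pF^{-1}$) lie in $]0,1]$, all strictly positive. By Cayley--Hamilton, every element is annihilated by $\chi_V(V)$, where $\chi_V \in W[T]$ is the characteristic polynomial of $V$ acting on $N^0/\text{torsion}$. The slope condition forces each non-leading coefficient of $\chi_V$ to have strictly positive $p$-adic valuation, and hence (being an integer) to lie in $pW$, so $\chi_V(V) \equiv V^n \pmod{p}$. Thus $\chi_V(V)$ maps to a unit in the residue field $k((V))$ of the discrete valuation ring $W((V))$, making it a unit in $W((V))$ itself; it therefore kills every element after localization. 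This slope-theoretic input is the crux of the argument; the other pieces are straightforward computations and formal additivity.
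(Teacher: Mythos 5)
Your overall strategy --- reduce to elementary modules via additivity of both sides, then compute each type directly --- is the same as the paper's, and you correctly single out the torsion-free part of the type~I case as the one genuinely nontrivial step (the paper dispatches it in a single clause: ``$V$ is topologically nilpotent, so when we invert $V$, $M$ becomes $0$''). But the Cayley--Hamilton argument you give there does not quite work: $V$ is a $\sigma^{-1}$-\emph{semilinear} endomorphism of the $W$-module $N^0$, not a $W$-linear one, so it has no characteristic polynomial in $W[T]$. You could take the characteristic polynomial of $V$ as a $\mathbb{Z}_p$-linear operator, but that needs $W$ to be finite over $\mathbb{Z}_p$, i.e.\ $k$ finite, while the lemma holds over every perfect field. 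Two repairs that stay close in spirit: (i) base-change to $\bar k$ and use the classification of isocrystals to decompose $N^0_K$ into isoclinic blocks, on each of which $V^r$ acts as $p^s$ times a unit with $s>0$; then the central element $V^r-p^s\in W[V]$ annihilates a suitable lattice and is congruent to $V^r$ modulo $p$, hence a unit; or (ii) argue directly from topological nilpotence: finite generation of $N^0$ over $W$ upgrades pointwise nilpotence to $V^nN^0\subset pN^0$ for some $n$, whence $N^0[V^{-1}]=pN^0[V^{-1}]$ and Nakayama in the completed local ring finishes.

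A related caution concerns the ring $W((V))$ itself. The plain localization $W[[V]][V^{-1}]$ is \emph{not} local --- $(p)$ and $(V-p)$ are already two distinct maximal ideals --- so the step ``unit mod $p$ implies unit'' fails there, and in fact the statement of the lemma fails there too: for $N^0=W$ with $F=\sigma$ and $V=p\sigma^{-1}$ (a type~I module of $F$-slope $0$) one finds $W[[V]][V^{-1}]\otimes_{W[[V]]}N^0\simeq W[1/p]\neq 0$, of length $1$, whereas $T^0=0$. The ring $W((V))$ in the lemma must therefore be read as the $p$-adic completion of $W[[V]][V^{-1}]$, which is a complete discrete valuation ring with uniformizer $p$ and residue field $k((V))$; with that reading your DVR assertion and your length computation in the type~II case are both correct, but the completion should be made explicit, since the vanishing for type~I genuinely depends on it.
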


\begin{proof}
It suffices to prove this for an elementary graded $R$-module $M$. If $M$ is
elementary of type I, then $V$ is topologically nilpotent on it, and so when
we invert $V$, $M$ becomes $0$; this agrees with $T^{0}(M)=0$. If $M$ is
elementary of type II , say $M=U_{l}$, then $W((V))\otimes M^{0}\simeq W((V))$
and $W((V))\otimes M^{1}=0$, agreeing with $T^{0}(M)=1$ and $T^{i}(M)=0$ for
$i\neq0$.
\end{proof}

Let $M$ be an object of $\mathsf{D}_{c}^{b}(R)$. Ekedahl (1986,
p.14)\nocite{ekedahl1986} defines the \emph{slope numbers} of $M$ to be%
\[
m^{i,j}(M)={\mathrm{dim}}_{k}\frac{H^{j}(M)^{i}}{H^{j}(M)_{p\text{-}%
\mathrm{tors}}^{i}+V\left(  H^{j}(M)^{i}\right)  }+{\mathrm{dim}}_{k}%
\frac{H^{j+1}(M)^{i-1}}{H^{j+1}(M)_{p\text{-}\mathrm{tors}}^{i-1}+F\left(
H^{j+1}(M)^{i-1}\right)  }%
\]
where $X_{p\text{-}\mathrm{tors}}$ denotes the torsion submodule of $X$
regarded as a $W$-module. Set%
\[
T^{i,j}(M)=T^{i}(H^{j}(M)).
\]
Ekedahl (ibid., p.85) defines the \emph{Hodge-Witt numbers} of $M$ to be
\[
h_{W}^{i,j}(M)=m^{i,j}(M)+T^{i,j}(M)-2T^{i-1,j+1}(M)+T^{i-2,j+2}(M)
\]
(see also \cite{illusie1983}, 6.3). Note that the invariants $m^{i,j}(M)$ and
$T^{i,j}(M)$ (hence also $h_{W}^{i,j}(M)$) depend only on the finite sequence
$(H^{j}(M))_{j\in\mathbb{Z}{}}$ of graded $R$-modules. It follows from
(\ref{e25}) that%
\begin{equation}
h_{W}^{i,j}(M\{m\}[n])=h_{W}^{i+m,j+n}(M). \label{eq56}%
\end{equation}
In particular (see \ref{a8}),%
\begin{equation}
h_{W}^{i,j}(M(r))=h_{W}^{i+r,j-r}(M). \label{eq57}%
\end{equation}

\begin{example}
\label{b2}We compute these invariants for certain $M\in\mathsf{D}_{c}^{b}(R)$.

(a) Suppose that $H^{j}(M)^{i}$ has finite length over $W$ for all $i,j$. Then
$H^{j}(M)^{i}=H^{j}(M)_{p\text{\textrm{-}}\mathrm{tors}}^{i}$, and so
$m^{i,j}(M)$ is zero for all $i$, $j$. Moreover $V$ is nilpotent on
$H^{j}(M)^{i}$, and so $T^{i,j}(M)=0$. It follows that $h_{W}^{i,j}(M)$ is
also zero for all $i$, $j$.

(b) Suppose that%
\[
H^{j}(M)^{i}=\left\{
\begin{array}
[c]{ll}%
R^{0}/R^{0}(F^{r-s}-V^{s}) & \text{if }(i,j)=(i_{0},j_{0})\\
0 & \text{otherwise}%
\end{array}
\right.
\]
for some $r>s\geq0$. Then%
\[
m^{i.j}(M)=\left\{
\begin{array}
[c]{ll}%
\dim_{k}(W_{\sigma}[F]/(F^{r-s})=r-s & \text{if }(i,j)=(i_{0},j_{0})\\
\dim_{k}(W_{\sigma}[V]/(V^{s})=s & \text{if }(i,j)=(i_{0}+1,j_{0}-1)\\
0 & \text{otherwise.}%
\end{array}
\right.
\]
Note that
\[
\left(  R^{0}/R^{0}(F^{r-s}-V^{s})\right)  \otimes K\simeq K_{\sigma
}[F]/(F^{r}-p^{s}),
\]
which is an $F$-isocrystal of slope $\lambda=s/r$ with multiplicity $m=r$. As
the dominoes attached to the $H^{j}(M)^{i}$ are obviously all zero, we see
that%
\[
h_{W}^{i,j}(M)=m^{i.j}(M)=\left\{
\begin{array}
[c]{ll}%
m(1-\lambda)\quad & \text{if }(i,j)=(i_{0},j_{0})\\
m\lambda & \text{if }(i,j)=(i_{0}+1,j_{0}-1)\\
0 & \text{otherwise}%
\end{array}
\right.
\]
where $\lambda$ is the unique slope of the $F$-isocrystal $(H^{j_{0}}%
(M)_{K}^{i_{0}},F)$ and $m$ is its multiplicity.

(c) Suppose that%
\[
\left(  H^{j_{0}}(M)^{i_{0}}\overset{d}{\longrightarrow}H^{j_{0}}(M)^{i_{0}%
+1}\right)  =\left(  \underset{\vphantom{A^{A^A}}\deg\text{ }i_{0}%
}{\prod\nolimits_{n\geq0}kV^{n}}\overset{d}{\longrightarrow}%
\underset{\vphantom{A^{A^A}}\deg\text{ }i_{0}+1}{\prod\nolimits_{n\geq
l}kdV^{n}}\right)  =U_{l}\{-i_{0}\},
\]
and that $H^{j}(M)^{i}=0$ for all other values of $i$ and $j$. Then
$H^{j}(M)^{i}=H^{j}(M)_{p\text{-}\mathrm{tors}}^{i}$, and so $m^{i,j}(M)$ is
zero for all $i$, $j$. The only nonzero $T$ invariant is $T^{i_{0},j_{0}%
}(M)=1$. It follows that the only nonzero Hodge-Witt numbers are%
\[
h_{W}^{i_{0},j_{0}}(M)=1,\quad h_{W}^{i_{0}+1,j_{0}-1}(M)=-2,\quad
h_{W}^{i_{0}+2,j_{0}-2}=1.
\]

\end{example}

\subsection{Weighted Hodge-Witt Euler characteristics}

\begin{theorem}
\label{b5} For every $M$ in $\mathsf{D}_{c}^{b}(R)$ and $r\in\mathbb{Z}{}$,%
\begin{equation}
\sum_{i,j\,\,(i\leq r)}(-1)^{i+j}(r-i)h_{W}^{i,j}(M)=e_{r}(M) \label{eq36}%
\end{equation}
where%
\begin{equation}
e_{r}(M)=\sum_{j}(-1)^{j-1}T^{r-1,j-r}(M)+\sum_{i,j,l\,\,\,(\lambda
_{i,j,l}\leq r-i)}(-1)^{i+j}(r-i-\lambda_{i,j,l}). \label{e26}%
\end{equation}

\end{theorem}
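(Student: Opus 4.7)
The plan is to decompose the Hodge--Witt numbers via their defining expression
\[
h_W^{i,j}(M)=m^{i,j}(M)+T^{i,j}(M)-2T^{i-1,j+1}(M)+T^{i-2,j+2}(M)
\]
and correspondingly split the left side of (\ref{eq36}) as $S_T+S_m$; I will then match $S_T$ with the first sum in (\ref{e26}) and $S_m$ with the second.

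The $T$-part is a telescoping computation. Re-indexing the three contributions along the diagonal shift $(i,j)\mapsto(i+1,j-1)$, under which $(-1)^{i+j}$ is invariant, collects the coefficient of each $T^{i,j}(M)$ in $S_T$ as
\[
C_i=(r-i)[i\le r]-2(r-i-1)[i\le r-1]+(r-i-2)[i\le r-2].
\]
A direct case check gives $C_i=\delta_{i,r-1}$: for $i\le r-2$ the three terms cancel as a discrete second difference; for $i=r-1$ only the first term survives; for $i\ge r$ every term vanishes. Hence $S_T=\sum_j(-1)^{r-1+j}T^{r-1,j}(M)$, which after the reindexing $k=j-r$ is the first sum of (\ref{e26}).

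For the $m$-part, I write $m^{i,j}(M)=\phi^i_j+\psi^{i-1}_{j+1}$, where $\phi^i_j$ is the $k$-dimension of $H^j(M)^i/(VH^j(M)^i+\text{$p$-torsion})$ and $\psi^i_j$ is defined similarly with $F$ in place of $V$. Re-indexing the $\psi$-piece by the same diagonal shift,
\[
S_m=\sum_{i\le r,\,j}(-1)^{i+j}(r-i)\phi^i_j+\sum_{i\le r-1,\,j}(-1)^{i+j}(r-i-1)\psi^i_j.
\]
The key input is the slope identity $\phi^i_j=\sum_l(1-\lambda_{i,j,l})$ and $\psi^i_j=\sum_l\lambda_{i,j,l}$, where the $\lambda_{i,j,l}\in[0,1)$ are the slopes of the $F$-isocrystal $(H^j(M)^i_K,F)$. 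Granting this, $(r-i)\phi^i_j+(r-i-1)\psi^i_j=\sum_l(r-i-\lambda_{i,j,l})$ for $i\le r-1$; the $i=r$ piece vanishes thanks to the factor $r-r=0$; and because every $\lambda<1$, the constraint $\lambda_{i,j,l}\le r-i$ in (\ref{e26}) is automatic for $i\le r-1$, forces $\lambda=0$ (with zero weight) at $i=r$, and fails for $i>r$. So $S_m$ matches the second sum of (\ref{e26}), completing $S_T+S_m=e_r(M)$.

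The main obstacle is the slope identity. Both $\phi^i_j$ and $\psi^i_j$ depend only on the torsion-free quotient $N:=H^j(M)^i/(p\text{-torsion})$, which is a finitely generated $R^0$-module whose isocrystal has all slopes in $[0,1)$ (dominoes are $p$-torsion, so isocrystal content comes only from type I elementaries). On such $N$, the endomorphism $V$ is injective (being bijective on $N_K$), so the snake lemma yields exactness of $0\to N'/VN'\to N/VN\to N''/VN''\to 0$ for every short exact sequence $0\to N'\to N\to N''\to 0$ of such modules; hence $\phi$ is additive, and symmetrically so is $\psi$. Filtering $N$ so that successive quotients become simple after $\otimes K$ reduces the identity to the standard Dieudonn\'e module $R^0/R^0(F^{n-s}-V^s)$ of Example \ref{b2}(b), where a direct computation on the $W$-basis $1,F,\ldots,F^{n-1}$ gives $\dim_k(N/VN)=n-s$ and $\dim_k(N/FN)=s$, matching $n(1-\lambda)$ and $n\lambda$ for $\lambda=s/n$.
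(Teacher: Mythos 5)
Your argument is correct, and it is essentially the alternative proof the paper sketches in Aside \ref{b5c} rather than the proof given in the body of the text. The paper's main argument proceeds differently: it uses the additivity of both sides under distinguished triangles (Lemmas \ref{b4}, \ref{b4b}), reduces to a single degree-shifted elementary module, and then checks the identity (\ref{eq36}) separately for type~I (after further isogeny-reducing, over $\bar k$, to a slope-$\lambda$ Dieudonn\'e module) and type~II. Your route instead splits $h_W^{i,j}$ into its $T$- and $m$-constituents, telescopes the $T$-sum, and invokes the slope identity $\phi^i_j=\sum_l(1-\lambda_{i,j,l})$, $\psi^i_j=\sum_l\lambda_{i,j,l}$ --- exactly the shape of the computation in Aside \ref{b5c}, where the paper derives the $m$-formula from the degeneration of the slope spectral sequence and a citation to \cite{illusie1983}, 6.2.

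What your version buys is a self-contained proof of the slope identity: you observe it is an isogeny invariant of the torsion-free quotient, prove additivity via the snake lemma, and reduce to the standard Dieudonn\'e module of Example \ref{b2}(b). This is cleaner to verify than chasing the citation, and it makes transparent why the constraint $\lambda_{i,j,l}\le r-i$ in (\ref{e26}) is either automatic ($i\le r-1$), trivial ($i=r$, zero weight), or void ($i>r$). Two points you should make explicit for completeness: (i) you need to base-change to $\bar k$ before you can filter so that graded pieces become isotypic copies of $E^\lambda$, and you should note that $\phi$, $\psi$ and the slopes are all invariant under this base change; (ii) when filtering $N$ you must arrange that each successive quotient is torsion-free (e.g.\ by intersecting with isotypic summands of $N_K$ and saturating), since the snake-lemma additivity of $N\mapsto N/VN$ requires $V$ injective on the quotient as well. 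Both are easy to supply, so the proof stands.
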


The sum in (\ref{eq36}) is over the pairs of integers $(i,j)$ such that $i\leq
r$, and the first sum in (\ref{e26}) is over the integers $j$. In the second
sum in (\ref{e26}), $(\lambda_{i,j,l})_{l}$ is the family of slopes (with
multiplicities) of the $F$-isocrystal $H^{j}(M)_{K}^{i}$ and the sum is over
the triples $(i,j,l)$ such that $\lambda_{i,j,l}\leq r-i$.

\begin{example}
\label{b4a}Let $M$ be a graded $R$-module, regarded as an element of
$\mathsf{D}_{c}^{b}(R)$ concentrated in degree $j$. Let $F^{\prime}$ act on
$M^{i}$ as $p^{i}F$ (assuming only nonnegative $i$'s occur). Then $F^{\prime}$
is a $\sigma$-linear endomorphism of $M$ regarded as a complex of $R^{0}%
$-modules%
\[
\begin{tikzcd}
\cdots\arrow{r}
&M^{i-1}\arrow{d}{p^{i-1}F}\arrow{r}{d}
&M^{i}\arrow{d}{p^{i}F}\arrow{r}{d}
&M^{i+1}\arrow{d}{p^{i+1}F}\arrow{r}{d}
&\cdots\\
\cdots\arrow{r}
&M^{i-1}\arrow{r}{d}
&M^{i}\arrow{r}{d}
&M^{i+1}\arrow{r}{d}
&\cdots,
\end{tikzcd}
\]
and the second term in (\ref{e26}) equals
\[
\sum\nolimits_{i,l\text{\thinspace\thinspace\thinspace}(\lambda_{i,l}\leq
r)}(-1)^{i+j}(r-\lambda_{i,l})
\]
where $(\lambda_{i,l})_{l}$ is the family of slopes of the $F$-isocrystal
$(M^{i},p^{i}F)_{K}$.
\end{example}

\begin{lemma}
\label{b4}For every distinguished triangle $M^{\prime}\rightarrow M\rightarrow
M^{\prime\prime}\rightarrow M^{\prime}[1]$ in $\mathsf{D}_{c}^{b}(R)$,%
\begin{align*}
m^{i,j}(M) &  =m^{i.j}(M^{\prime})+m^{i,j}(M^{\prime\prime})\\
T^{i,j}(M) &  =T^{i,j}(M^{\prime})+T^{i,j}(M^{\prime\prime}),
\end{align*}
and so%
\[
h_{W}^{i.j}(M)=h_{W}^{i,j}(M^{\prime})+h_{W}^{i,j}(M^{\prime\prime})\text{.}%
\]

\end{lemma}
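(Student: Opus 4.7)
My plan is to establish the additivity of $T^{i,j}$ and of $m^{i,j}$ separately, first for short exact sequences of coherent graded $R$-modules and then bootstrapping to general distinguished triangles in $\mathsf{D}_c^b(R)$ via the cohomology long exact sequence; the additivity of $h_W^{i,j}$ then follows formally from its definition as an integer linear combination of these invariants.

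For $T^{i,j}$ the argument is short. By Lemma \ref{a1a}, $T^{i,j}(M) = T^i(H^j(M))$ equals $\mathrm{length}_{W((V))}\bigl(W((V))\otimes_{W[[V]]} H^j(M)^i\bigr)$. Since $W((V))$ is a localization of $W[[V]]$, the tensor functor $W((V))\otimes_{W[[V]]}-$ is exact; applied to a short exact sequence $0\to N'\to N\to N''\to 0$ of coherent graded $R$-modules it yields a short exact sequence of finite-length $W((V))$-modules, so additivity of length gives the claim.

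For $m^{i,j}$ each summand is the dimension of a $V$- or $F$-cokernel on a torsion-free quotient. I would first reduce to elementary modules via dévissage along the filtration in \ref{a5}, where the cokernels admit the explicit description recorded in Example \ref{b2}, and check additivity directly on the short exact sequences that arise in that filtration. Equivalently, these $k$-dimensions can be interpreted as multiplicities of specific slopes in the $F$-isocrystal $(H^j(M)^i_K,p^iF)$, and multiplicities of slopes are manifestly additive on short exact sequences of isocrystals, with the $p$-torsion corrections absorbed by the explicit form of the elementary modules.

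To pass from short exact sequences of modules to a general distinguished triangle, one factors the cohomology long exact sequence through the image/coimage decomposition of each arrow and sums up the contributions. The main obstacle is that $m^{i,j}(M)$ couples the cohomology modules $H^j(M)$ and $H^{j+1}(M)$, so the image of each connecting homomorphism $\partial\colon H^j(M'')\to H^{j+1}(M')$ contributes to both summands of $m^{i,j}$ and these contributions must cancel. I expect this cancellation to follow from the balanced roles of $F$ and $V$ in the formula together with the domino-theoretic structure of $R$-modules developed in \ref{a17}; this balancing is precisely what upgrades the naive Euler-characteristic additivity on long exact sequences to the claimed per-$(i,j)$ additivity on distinguished triangles.
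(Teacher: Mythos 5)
Your treatment of short exact sequences of coherent graded $R$-modules is correct and matches the paper's: for $T^{i,j}$ you invoke exactness of $W((V))\otimes_{W[[V]]}(-)$ together with Lemma \ref{a1a}, and for $m^{i,j}$ you observe that it is determined by the isocrystals $H^{j}(M)_{\bar{K}}$ and appeal to semisimplicity of isocrystals over an algebraically closed field; the paper phrases the latter as ``$m^{i,j}(M)$ depends only on $\bar{K}\otimes_{W}M$, and short exact sequences split over $\bar{K}$.''

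The passage from short exact sequences to arbitrary distinguished triangles, however, is not merely a missing step in your argument: the claimed per-$(i,j)$ additivity is actually \emph{false} for a general distinguished triangle, and no ``balanced roles of $F$ and $V$'' can restore it. Note in particular that $T^{i,j}$ depends only on $H^{j}$, with no coupling between cohomological degrees, so the cancellation mechanism you hope for cannot even get started there. For a concrete counterexample, take any nonzero $A\in\mathsf{D}_{c}^{b}(R)$ and consider the distinguished triangle $A\xrightarrow{\,0\,}A\rightarrow A\oplus A[1]\rightarrow A[1]$. Since $H^{j}(A\oplus A[1])=H^{j}(A)\oplus H^{j+1}(A)$, one gets $T^{i,j}(A\oplus A[1])=T^{i,j}(A)+T^{i,j+1}(A)$ and similarly for $m^{i,j}$, so the claimed identity would force $T^{i,j}(A)+T^{i,j+1}(A)=0$ and $m^{i,j}(A)+m^{i,j+1}(A)=0$ for all $j$, i.e.\ all invariants of $A$ vanish; taking $A=U_{0}$ violates the first and $A=W$ the second. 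The paper's own proof commits the same unjustified leap (``it suffices to show that $m$ and $T$ are additive on short exact sequences''); what saves the paper is that Lemma \ref{b4} is only ever applied to triangles whose cohomology long exact sequence degenerates into short exact sequences in each degree, namely truncation triangles $\tau^{<j}M\rightarrow M\rightarrow\tau^{\geq j}M\rightarrow$ and triangles arising from short exact sequences of modules concentrated in a single cohomological degree, which are the two kinds that occur in the d\'evissage proving Theorem \ref{b5}. For those the reduction is immediate and your short-exact-sequence arguments go through; so the correct fix is to restrict the statement (or its use) to that class of triangles, not to search for a cancellation that does not exist.
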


\begin{proof}
The distinguished triangle gives rise to an exact sequence of graded
$R$-modules%
\[
\cdots\rightarrow H^{j}(M^{\prime})\rightarrow H^{j}(M)\rightarrow
H^{j}(M^{\prime\prime})\rightarrow\cdots.
\]
with only finitely many nonzero terms. It suffices to show that $m$ and $T$
are additive on short exact sequences%
\begin{equation}
0\rightarrow M^{\prime}\rightarrow M\rightarrow M^{\prime\prime}\rightarrow0
\label{eq23}%
\end{equation}
of coherent graded $R$-modules. But $m^{ij}(M)$ depends only on $\bar
{K}\otimes_{W}M$ where $\bar{K}$ is the field of fractions of $\bar{W}$, and
the sequence (\ref{eq23}) splits when tensored with $\bar{K}$. The additivity
of $T$ follows from the description of $T^{i}$ in Lemma \ref{a1a}.
\end{proof}

\begin{lemma}
\label{b4b}For every distinguished triangle $M^{\prime}\rightarrow
M\rightarrow M^{\prime\prime}\rightarrow M^{\prime}[1]$ in $\mathsf{D}_{c}%
^{b}(R)$,%
\begin{equation}
e_{r}(M)=e_{r}(M^{\prime})+e_{r}(M^{\prime\prime})\text{.} \label{eq8}%
\end{equation}

\end{lemma}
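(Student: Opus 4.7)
The plan is to exhibit $e_r$ as the Euler characteristic of an invariant on coherent graded $R$-modules that is additive on short exact sequences; the identity on distinguished triangles will then drop out of the long exact cohomology sequence. Using $T^{r-1,j-r}(M) = T^{r-1}(H^{j-r}(M))$ and reindexing the first sum in (\ref{e26}) by $k = j-r$, while grouping the second sum by its index $j$, one obtains
\[
e_r(M) = \sum_j (-1)^j \, \phi(H^j(M)),
\]
where for a coherent graded $R$-module $N$,
\[
\phi(N) = (-1)^{r-1} T^{r-1}(N) + \sum_{i,l \, (\lambda_{i,l} \leq r-i)} (-1)^i (r-i-\lambda_{i,l}),
\]
with $(\lambda_{i,l})_l$ the multiset of slopes of the $F$-isocrystal $N^i_K$. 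Thus $e_r(M)$ depends only on the sequence $(H^j(M))_j$, and it suffices to show that $\phi$ is additive on short exact sequences of coherent graded $R$-modules.

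Let $0 \to N' \to N \to N'' \to 0$ be such a short exact sequence. The first term of $\phi$ is additive by Lemma \ref{a1a}, since $W((V))$ is flat over $W[[V]]$ (being a localization) and length is additive on finite-length modules. For the second term, exactness of $-\otimes_W K$ yields short exact sequences $0 \to (N')^i_K \to N^i_K \to (N'')^i_K \to 0$ of $F$-isocrystals for each $i$. Over $\bar K$ the Dieudonn\'e--Manin classification splits such a sequence into isotypic components, so the multiset of slopes of $N^i_K$ is the disjoint union of those of $(N')^i_K$ and $(N'')^i_K$; hence $\sum_{l\,(\lambda_{i,l}\leq r-i)}(r-i-\lambda_{i,l})$ is additive in each $i$, and therefore so is the full second term. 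Combining, $\phi$ is additive.

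Finally, the distinguished triangle $M' \to M \to M'' \to M'[1]$ yields a long exact sequence
\[
\cdots \to H^{j-1}(M'') \to H^j(M') \to H^j(M) \to H^j(M'') \to H^{j+1}(M') \to \cdots
\]
of coherent graded $R$-modules with only finitely many nonzero terms. Any invariant additive on short exact sequences has vanishing Euler characteristic on a bounded exact sequence, so applying $\phi$ and grouping consecutive triples $(H^j(M'), H^j(M), H^j(M''))$ with the $(+,-,+)$ sign pattern produced by the period-three shift gives $e_r(M') - e_r(M) + e_r(M'') = 0$, which is the claim. The only step requiring real care is the slope additivity: over $K$ itself the exact sequence of $F$-isocrystals need not split, and it is the Dieudonn\'e--Manin decomposition over $\bar K$ that guarantees additivity of slope multisets.
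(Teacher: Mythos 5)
Your argument is correct and is essentially the paper's approach: the paper's proof simply invokes the proof of Lemma \ref{b4}, which is exactly the observation you spell out — that $e_r$ depends only on the sequence $(H^j(M))_j$, its ingredients ($T$-invariants and slope multisets) are additive on short exact sequences of coherent graded $R$-modules (by Lemma \ref{a1a} and passage to $\bar K$ respectively), and additivity on distinguished triangles then follows from the long exact cohomology sequence. Your explicit reindexing to exhibit $e_r(M)=\sum_j(-1)^j\phi(H^j(M))$ and the flatness remark on $W((V))$ merely make explicit what the paper leaves implicit.
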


\begin{proof}
The same argument as in the proof of Lemma \ref{b4} applies.
\end{proof}

\subsection{Proof of Theorem \ref{b5}}

The numbers do not change under extension of the base field, and so we may
suppose that $k$ is algebraically closed. First note that, if $M^{\prime
}\rightarrow M\rightarrow M^{\prime\prime}\rightarrow M^{\prime}[1]$ is a
distinguished triangle in $\mathsf{D}_{c}^{b}(R)$ and (\ref{eq36}) holds for
$M^{\prime}$ and $M^{\prime\prime}$, then it holds for $M$ (apply \ref{b4} and
\ref{b4b}).

A complex $M$ in $\mathsf{D}_{c}^{b}(R)$ has only finitely many nonzero
cohomology groups, and each has a finite filtration whose quotients are
elementary graded $R$-modules. By using induction on the sum of the lengths of
the shortest such filtrations, one sees that it suffices to prove the formula
for a complex $M$ having only one nonzero cohomology module, which is a degree
shift of an elementary graded $R$-module, i.e., we may assume $M=H^{j_{0}%
}(M)=N\{-i_{0}\}$ where $N$ is elementary.

Assume that $N$ is elementary of type I. If $N$ is torsion, then both sides
are zero. We may suppose that $N$ is a Dieudonne module of slope $\lambda
\in\lbrack0,1[$ with multiplicity $m$ (because $N$ is isogenous to a direct
sum of such modules --- recall that $k$ is algebraically closed). In this case
(see \ref{b2}b), the only nonzero Hodge-Witt invariants of $M$ are%
\begin{align*}
h_{W}^{i_{0},j_{0}}(M)  &  =m^{i_{0},j_{0}}(M)=m(1-\lambda)\\
h_{W}^{i_{0}+1,j_{0}-1}(M)  &  =m^{i_{0}+1,j_{0}-1}(M)=m\lambda.
\end{align*}
Both sides of (\ref{eq36}) are zero if $r\leq i_{0}$, and so we may suppose
that $r>i_{0}$. Then the left hand side (\ref{eq36}) is
\begin{align*}
&  (-1)^{i_{0}+j_{0}}(r-i_{0})h^{i_{0},j_{0}}+(-1)^{i_{0}+1+j_{0}-1}%
(r-i_{0}-1)h^{i_{0}+1,j_{0}-1}\\
=  &  (-1)^{i_{0}+j_{0}}(r-i_{0})(1-\lambda)m+(-1)^{i_{0}+j_{0}}%
(r-i_{0}-1)\lambda m\\
=  &  (-)^{i_{0}+j_{0}}(r-i_{0}-\lambda)m.
\end{align*}
On the other hand, the isocrystal $H^{j}(M)_{K}^{i}$ is zero for
$(i,j)\neq(i_{0},j_{0})$ and $H^{j_{0}}(M)_{K}^{i_{0}}$ is an isocrystal with
slope $\lambda$ of multiplicity $m$, and so%
\[
e_{r}(M)=(-1)^{i_{0}+j_{0}}(r-i_{0}-\lambda)m.
\]

If $N$ is of type II, i.e., $H^{j_{0}}(M)=U_{l}\{-i_{0}\}$, then
$T^{i_{0},j_{0}}=1$ is the only non-zero $T$-invariant (see \ref{b2}c), and so%
\[
e_{r}(M)=\left\{
\begin{array}
[c]{ll}%
(-1)^{i_{0}+j_{0}} & \text{if }r=i_{0}+1\\
0 & \text{otherwise}%
\end{array}
\right.
\]
The nonzero $h_{W}$-invariants are
\[
h_{W}^{i_{0},j_{0}}=1\quad h_{W}^{i_{0}+1,j_{0}-1}=-2\quad h_{W}%
^{i_{0}+2,j_{0}-2}=1,
\]
from which (\ref{eq36}) follows by an elementary calculation.

\begin{aside}
\label{b5c}Here is an alternative proof of Theorem \ref{b5}. Let%
\[
L(r)=\sum\nolimits_{i,j\,\,(i\leq r)}(-1)^{i+j}(r-i)\left(  T^{i,j}%
(M)-2T^{i-1,j+1}(M)+T^{i-2,j+2}(M)\right)  \text{.}%
\]
The contribution of $T^{i_{0},j_{0}}$ to this sum is $(-1)^{i_{0}+j_{0}%
}T^{i_{0},j_{0}}$ if $i_{0}=r-1$ and $0$ otherwise. Therefore%
\begin{equation}
\begin{aligned} L(r) & =\sum\nolimits_{j}(-1)^{r-1+j}T^{r-1,j}\\ & =\sum\nolimits_{j}(-1)^{j-1}T^{r-1,j-r}. \label{eq39} \end{aligned}
\end{equation}

For an $F$-crystal $P$, let $P_{[i,i+1[}=(K\otimes_{W}P)_{[i,i+1[}$ (part with
slopes $\lambda$, $i\leq\lambda<i+1$). From the degeneration of the slope
spectral sequence (\ref{b0}) at $E_{1}$ modulo torsion, we find that%
\[
H^{n}(sM)_{[i,i+1[}\simeq(H^{n-i}(M)_{K}^{i},p^{i}F).
\]
From this, it follows that%
\[
m^{i,n-i}(M)=\sum_{\lambda\in\lbrack i,i+1[}(i+1-\lambda)h_{\lambda}^{n}%
-\sum_{\lambda\in\lbrack i-1,i[}(i-1-\lambda)h_{\lambda}^{n}%
\]
where $h_{\lambda}^{n}$ is the multiplicity of $\lambda$ as a slope of
$H^{n}(sM)$ (cf. \cite{illusie1983}, 6.2). Using these two statements, we find
that%
\begin{equation}
\sum_{i,j\,\,(i\leq r)}(-1)^{i+j}(r-i)m^{i,j}(M)=\sum_{i,j,l\,\,\,(\lambda
_{i,j,l}\leq r-i)}(-1)^{i+j}(r-i-\lambda_{i,j,l}). \label{eq40}%
\end{equation}
On adding (\ref{eq39}) and (\ref{eq40}), we obtain (\ref{eq36}).
\end{aside}

\subsection{Weighted Hodge Euler characteristics}

Following Ekedahl (1986, p.14), we define the \emph{Hodge numbers} of an $M$
in $\mathsf{D}_{c}^{b}(R)$ to be%
\[
h^{i,j}(M)=\dim_{k}(H^{j}(R_{1}\otimes_{R}^{L}M)^{i}).
\]

\begin{theorem}
\label{b5a}For every $M$ in $\mathsf{D}_{c}^{b}(R)$ and $i\in\mathbb{Z}{}$,%
\begin{equation}
\sum\nolimits_{j}(-1)^{j}h_{W}^{i,j}(M)=\sum\nolimits_{j}(-1)^{j}%
h^{i,j}(M)\text{.} \label{eq25}%
\end{equation}

\end{theorem}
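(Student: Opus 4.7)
The plan is to reduce \eqref{eq25} to a case-by-case check on elementary modules, using that both sides are additive on distinguished triangles.

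First I would show that both sides are additive on distinguished triangles in $\mathsf{D}_{c}^{b}(R)$. For the left-hand side, this is immediate from Lemma \ref{b4}. For the right-hand side, the derived functor $R_{1}\otimes_{R}^{L}-$ is triangulated, so it sends a distinguished triangle $M'\to M\to M''\to M'[1]$ to a distinguished triangle in $\mathsf{D}^{b}(k\text{-graded vector spaces})$. Passing to the long exact cohomology sequence and restricting to the (exact) degree-$i$ component, one gets a long exact sequence of finite-dimensional $k$-vector spaces, so $\sum_{j}(-1)^{j}h^{i,j}(M)=\sum_{j}(-1)^{j}h^{i,j}(M')+\sum_{j}(-1)^{j}h^{i,j}(M'')$.

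Using coherence (\ref{a6}) together with the structure of coherent graded $R$-modules (\ref{a5}), I can then filter $M$ by its cohomology and filter each $H^{j}(M)$ by elementary subquotients, and so reduce by induction to the case $M=N\{-i_{0}\}[-j_{0}]$ with $N$ an elementary graded $R$-module. By the shift compatibilities \eqref{eq56} (for $h_{W}$) and the analogous identity $H^{j}(R_{1}\otimes_{R}^{L}(M\{m\}[n]))^{i}=H^{j+n}(R_{1}\otimes_{R}^{L}M)^{i+m}$ (for $h$), it suffices to verify the identity for $N$ placed in bidegree $(0,0)$. There are now three sub-cases. For $N$ of Type I that is $W$-torsion, Example \ref{b2}a gives $h_{W}^{i,j}(N)=0$ for all $(i,j)$, and a direct computation using a two-term $W_{\sigma}[F,V]$-resolution of $N$ shows $\sum_{j}(-1)^{j}h^{i,j}(N)=0$ as well. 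For $N$ of Type I of slope $\lambda\in[0,1[$ and multiplicity $m$, Example \ref{b2}b gives column Euler characteristics $m(1-\lambda)$ in column $0$ and $m\lambda$ in column $1$ and $0$ elsewhere; the same is obtained on the Hodge side by computing $R_{1}\otimes_{R}^{L}N$ via a small free resolution and invoking the Newton--Hodge relation $F^{r-s}\equiv V^{s}\pmod{F,V\text{-relations}}$. For $N=U_{l}$ of Type II, Example \ref{b2}c gives column Euler characteristics $+1,-2,+1$ in columns $0,1,2$; the matching Hodge-side computation uses a two-term free $R$-resolution of $U_{l}$ and a careful identification of the $\Tor$'s.

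The main obstacle is the Type II case: although the underlying graded pieces of $U_{l}$ are the infinite-dimensional $k$-vector spaces $\prod_{n\geq0}kV^{n}$ and $\prod_{n\geq l}k\,dV^{n}$, the groups $H^{j}(R_{1}\otimes_{R}^{L}U_{l})^{i}$ must come out finite-dimensional (this is Ekedahl's criterion, \ref{a6}), and one must track the signs of their dimensions carefully to recover the pattern $(+1,-2,+1)$. A cleaner alternative is to bypass the direct $\Tor$ computation by invoking the slope spectral sequence (\ref{b0}) together with the degeneration statement in Aside \ref{b5c}: the alternating sum $\sum_{j}(-1)^{j}h_{W}^{i,j}$ equals the Euler characteristic of $(R_{1}\otimes_{R}^{L}M)^{i,\bullet}$ expressed through the columns of the $E_{1}$-page of the conjugate spectral sequence, and this identification reduces \eqref{eq25} to the analogous equality for each elementary constituent, where it is already verified in Example \ref{b2}.
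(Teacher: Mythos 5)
Your primary approach---additivity of both sides on distinguished triangles, reduction to degree shifts of elementary graded $R$-modules by filtering the cohomology, then a case-by-case direct computation---is the same route the paper takes; the paper writes only ``As for Theorem~\ref{b5}\dots'' and defers the elementary-module check to Ekedahl 1986, IV, Theorem~3.2. Your explicit verification that $\sum_j(-1)^j h^{i,j}$ is additive on triangles (via the triangulated functor $R_1\otimes_R^L-$ and the exact degree-$i$ component) is a correct and useful detail that the paper leaves implicit.

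The ``cleaner alternative'' in your final paragraph, however, is circular and should be dropped. You assert that $\sum_j(-1)^j h_W^{i,j}(M)$ ``equals the Euler characteristic of $(R_1\otimes_R^L M)^{i,\bullet}$''---but that Euler characteristic is by definition $\sum_j(-1)^j h^{i,j}(M)$, so this purported ``identification'' is literally the statement \eqref{eq25} to be proved, not a tool that reduces it to something simpler. Neither the slope spectral sequence \eqref{eq7} nor Aside~\ref{b5c} compares $h_W$ with $h$: they concern $m^{i,j}$, $T^{i,j}$, and slopes of $sM_K$, and make no contact with $R_1\otimes_R^L M$. Moreover, Example~\ref{b2} computes only $m^{i,j}$, $T^{i,j}$, and $h_W^{i,j}$ for elementary modules; it never computes $h^{i,j}$, so it cannot ``already verify'' the Hodge side for elementary constituents. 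Consequently the direct $\Tor$ computation (or the reference to Ekedahl) remains the only genuine route you offer for the elementary cases, and the Type~II computation you flag as ``the main obstacle'' does in fact have to be carried out; it is not bypassed by the alternative you suggest.
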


\begin{proof}
As for Theorem \ref{b5}, it suffices to prove this for an elementary
$R$-module, where it can be checked directly. See \cite{ekedahl1986}, IV,
Theorem 3.2.
\end{proof}

For $M=R\Gamma(W\Omega_{X}^{\bullet})$, the formula (\ref{eq25}) was found
independently by Crew and Milne (cf. ibid. p.86).

\begin{theorem}
\label{b5b}For every $M$ in $\mathsf{D}_{c}^{b}(R),$%
\begin{equation}
\sum_{i,j\,\,\,(i\leq r)}(-1)^{i+j}(r-i)h^{i,j}(M)=e_{r}(M). \label{eq27}%
\end{equation}

\end{theorem}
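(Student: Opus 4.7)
The plan is to derive Theorem \ref{b5b} as a direct formal consequence of Theorems \ref{b5} and \ref{b5a}, with no new geometric input required. Theorem \ref{b5} expresses $e_{r}(M)$ as a weighted alternating sum of the Hodge--Witt numbers $h_{W}^{i,j}(M)$, while Theorem \ref{b5a} equates, for each fixed $i$, the alternating sums in $j$ of $h_{W}^{i,j}(M)$ and of $h^{i,j}(M)$. Since the weights $(-1)^{i}(r-i)$ in Theorem \ref{b5} depend only on $i$ (not on $j$), the $j$-sum can be performed first, which is exactly where Theorem \ref{b5a} is available.

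First I would observe that the double sum in both Theorems \ref{b5} and \ref{b5b} is actually finite: for $M\in\mathsf{D}_{c}^{b}(R)$ only finitely many cohomology modules $H^{j}(M)$ are nonzero, each admits a finite filtration with elementary quotients, and elementary modules are concentrated in degrees $0$ or $\{0,1\}$, so the graded pieces $H^{j}(M)^{i}$ vanish outside a bounded range of $i$. The same applies to $R_{1}\otimes_{R}^{L}M$, so only finitely many of the $h^{i,j}(M)$ and $h_{W}^{i,j}(M)$ are nonzero. This justifies any reordering of the summation.

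Next I would carry out the following short manipulation. For fixed $r$, rewrite the left-hand side of (\ref{eq27}) by pulling the $i$-dependent factor out of the $j$-sum:
\[
\sum_{i,j\,\,(i\leq r)}(-1)^{i+j}(r-i)h^{i,j}(M)
=\sum_{i\leq r}(-1)^{i}(r-i)\sum_{j}(-1)^{j}h^{i,j}(M).
\]
By Theorem \ref{b5a}, the inner sum equals $\sum_{j}(-1)^{j}h_{W}^{i,j}(M)$ for every $i$, so the expression becomes
\[
\sum_{i\leq r}(-1)^{i}(r-i)\sum_{j}(-1)^{j}h_{W}^{i,j}(M)
=\sum_{i,j\,\,(i\leq r)}(-1)^{i+j}(r-i)h_{W}^{i,j}(M),
\]
and this equals $e_{r}(M)$ by Theorem \ref{b5}.

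There is no real obstacle here; the only thing worth checking carefully is the finiteness of support that legitimizes the interchange, and the fact that Theorem \ref{b5a} holds \emph{term by term in $i$}, not merely after a further sum over $i$. Both are immediate from the definitions recalled in \S 1 and from the statement of Theorem \ref{b5a} as given.
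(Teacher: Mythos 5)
Your argument is correct and is essentially identical to the paper's own proof: the paper likewise pulls the $i$-dependent weight outside, applies Theorem \ref{b5a} to the inner $j$-sum, and then invokes Theorem \ref{b5}. The only addition is your explicit remark about finiteness of support, which the paper leaves implicit.
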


\begin{proof}
We have%
\begin{align*}
\text{LHS}  &  \,\,=\sum\nolimits_{i\leq r}(-1)^{i}(r-i)\left(  \sum
\nolimits_{j}(-1)^{j}h^{i,j}(M)\right) \\
&  \overset{\text{(\ref{eq25})}}{=}\sum\nolimits_{i\leq r}(-1)^{i}(r-i)\left(
\sum\nolimits_{j}(-1)^{j}h_{W}^{i,j}(M)\right)  =\text{RHS.}%
\end{align*}

\end{proof}

\section{Internal Homs and tensor products in
$\mathsf{D}_{c}^{b}(R)$}

We review some constructions from \cite{ekedahl1985}.

\subsection{The internal tensor product}

Let $M$ and $N$ be graded $R$-modules. Ekedahl (1985, p.69) defines $M\ast N$
to be the largest quotient of $M\otimes_{W}N$,%
\[
x\otimes y\mapsto x\ast y\colon M\otimes_{W}N\rightarrow M\ast N\text{,}%
\]
in which the following relations hold: $Vx\ast y=V(x\ast Fy)$, $x\ast
Vy=V(Fx\ast y)$, $F(x\ast y)=Fx\ast Fy$, $d(x\ast y)=dx\ast y+(-1)^{\deg
(x)}x\ast dy$.

Regard $W$ as a graded $R$-module concentrated in degree zero with $F$ acting
as $\sigma$. Then
\begin{equation}
W\ast M\simeq M\simeq M\ast W, \label{eq62}%
\end{equation}
and so $W$ plays the role of the identity object $\1$.

The bifunctor $(M,N)\rightsquigarrow M\ast N$ of graded $R$-modules derives to
a bifunctor%
\[
\ast^{L}\colon\mathsf{D}^{-}(R)\times\mathsf{D}^{-}(R)\rightarrow
\mathsf{D}^{-}(R)\text{.}%
\]
If $M$ and $N$ are in $\mathsf{D}_{c}^{b}(R)$, then so also is
\[
M\hat{\ast}N\overset{\textup{{\tiny def}}}{=}\widehat{M\ast^{L}N}\text{.}%
\]
See \cite{ekedahl1985}, I, 4.8; \cite{illusie1983}, 2.6.1.10.

\subsection{The internal Hom}

For graded $R$-modules $M,N$, we let $\Hom^{d}(M,N)$ denote the set of graded
$R$-homomorphisms $M\rightarrow N$ of degree $d$, and we let $\Hom^{\bullet
}(M,N)=\bigoplus_{d}\Hom^{d}(M,N)$. Let $_{R}R$ denote the ring $R$ regarded
as a graded left $R$-module. The internal $\Hom$ of two graded $R$-modules
$M,N$ is
\[
\underline{\Hom}(M,N)\overset{\textup{{\tiny def}}}{=}\Hom^{\bullet}(_{R}R\ast
M,N).
\]
This graded $\mathbb{Z}{}_{p}$-module becomes a graded $R$-module thanks to
the right action of $R$ on $_{R}R$, and $\underline{\Hom}$ derives to a
bifunctor%
\[
R\underline{\Hom}\colon\mathsf{D}(R)^{\mathrm{opp}}\times\mathsf{D}%
^{+}(R)\rightarrow\mathsf{D}(R)
\]
(denoted by $R\underline{\Hom}_{R}$ in \cite{illusie1983}, 2.6.2.6, by
$R\underline{\Hom}_{R}^{!}$ in \cite{ekedahl1985}, p.73, and by $R\Hom_{R}%
^{!}$ in \cite{ekedahl1986}, p.8).

The functor $R\underline{\Hom}(M,N)$ commutes with extension of the base
field. For $M$ in $\mathsf{D}^{-}(R)$ and $N$ in $\mathsf{D}^{+}(R)$,%
\begin{align}
R\underline{\Hom}(W,N)\overset{\text{(\ref{eq62})}}{\simeq}R\Hom^{\bullet
}(_{R}R,N) &  \simeq N\label{eq28c}\\
R\Hom(W,R\underline{\Hom}(M,N)) &  \simeq R\Hom(M,N).\label{eq28}%
\end{align}
(isomorphisms in $\mathsf{D}_{c}^{b}(R)$ and $D(\mathbb{Z}{}_{p})$
respectively). Ekedahl shows that%
\[
R_{1}\otimes_{R}^{L}R\underline{\Hom}(M,N)\simeq R\Hom(R_{1}\otimes_{R}%
^{L}M,R_{1}\otimes_{R}^{L}M)
\]
(isomorphism in $\mathsf{D}(k[d])$) and that%
\begin{equation}
\widehat{R\underline{\Hom}(M,N)}\simeq R\underline{\Hom}(\hat{M},\hat
{N})\text{,}\label{eq41}%
\end{equation}
and so his criterion (see \ref{a6}) shows that $R\underline{\Hom}(M,N)$ lies
in $\mathsf{D}_{c}^{b}(R)$ when both $M$ and $N$ do. See \cite{illusie1983}, 2.6.2.

\section{Homological algebra in the category
$\mathsf{D}_{c}^{b}(R)$}

Throughout this section, $S=\Spec k$, and $\Lambda_{m}=\mathbb{Z}%
/p^{m}\mathbb{Z}{}$.

\subsection{The perfect site}

An $S$-scheme $U$ is \emph{perfect} if its absolute Frobenius map
$F_{\mathrm{abs}}\colon U^{(1/p)}\rightarrow U$ is an isomorphism. The
\emph{perfection} $T^{\mathrm{pf}}$ of an $S$-scheme $T$ is the limit of the
projective system $T\overset{F_{\mathrm{abs}}}{\longleftarrow}T^{(1/p)}%
\overset{F_{\mathrm{abs}}}{\longleftarrow}\cdots$. The scheme $T^{\mathrm{pf}%
}$ is perfect, and for any perfect $S$-scheme $U$, the canonical map
$T^{\mathrm{pf}}\rightarrow T$ defines an isomorphism%
\[
\Hom_{S}(U,T^{\mathrm{pf}})\rightarrow\Hom_{S}(U,T).
\]
Let $\mathsf{Pf}/S$ denote the category of perfect affine schemes over $S$. A
\emph{perfect group scheme} over $S$ is a representable functor $\mathsf{Pf}%
/S\rightarrow\mathsf{Gp}$. For any affine group scheme $G$ over $S$, the
functor $U\rightsquigarrow G(U)\colon\mathsf{Pf}/S\rightarrow\mathsf{Gp}$ is a
perfect group scheme represented by $G^{\mathsf{pf}}$. We say that a perfect
group scheme is \emph{algebraic} if it is represented by an algebraic $S$-scheme.

Let $\mathcal{S}{}$ denote the category of sheaves of commutative groups on
$\left(  \mathsf{Pf}/S\right)  _{\mathrm{et}}$. The commutative perfect
algebraic group schemes killed by some power of $p$ form an abelian
subcategory $\mathcal{G}$ of $\mathcal{S}{}$ which is closed under extensions.
Let $G\in\mathcal{G}$. The identity component $G^{\circ}$ of $G$ has a finite
composition series whose quotients are isomorphic to ${\mathbb{G}}%
_{a}^{\mathrm{pf}}$, and the quotient $G/G{^{\circ}}$ is \'{e}tale. The
dimension of $G$ is the dimension of any algebraic group whose perfection is
$G^{\circ}$. The category $\mathcal{G}{}$ is artinian. See \cite{milne1976},
\S 2, or \cite{berthelot1981}, II.

\begin{example}
\label{n7}Let $f\colon X\rightarrow S$ be a smooth scheme over $S$. The
functors $U\rightsquigarrow\Gamma(U,W_{m}\Omega_{X}^{i})$ are sheaves for the
\'{e}tale topology on $X$. The composite%
\[
W_{m+1}\Omega_{X}^{i}\overset{F}{\longrightarrow}W_{m}\Omega_{X}%
^{i}\rightarrow W_{m}\Omega_{X}^{i}/d(W_{m}\Omega_{X}^{i-1})
\]
factors through $W_{m}\Omega_{X}^{i}$, and so defines a homomorphism%
\[
F\colon W_{m}\Omega_{X}^{i}\rightarrow W_{m}\Omega_{X}^{i}/d(W_{m}\Omega
_{X}^{i-1})\text{.}%
\]
The sheaf $\nu_{m}(i)$ on $X_{\mathrm{et}}$ is defined to be the kernel of%
\[
1-F\colon W_{m}\Omega_{X}^{i}\rightarrow W_{m}\Omega_{X}^{i}/d(W_{m}\Omega
_{X}^{i-1})
\]
(\cite{milne1976}, \S 1; \cite{berthelot1981}, p.209). The map $W_{m+1}%
\Omega_{X}^{i}\rightarrow W_{m}\Omega_{X}^{i}$ defines a surjective map
$\nu_{m+1}(i)\rightarrow\nu_{m}(i)$ with kernel $\nu_{1}(i)$.

Assume that $f$ is proper. The sheaves $R^{i}f_{\ast}\nu_{m}(r)$ lie in
$\mathcal{G}{}$. When $m=1$, this is proved in \cite{milne1976}, 2.7, and the
general case follows by induction on $m$. Following \cite{milne1986}, p.309,
we define%
\begin{align*}
H^{i}(X,(\mathbb{Z}{}/p^{m}\mathbb{Z}{})(r))  &  =H^{i-r}(X_{\mathrm{et}}%
,\nu_{m}(r))\\
H^{i}(X,\mathbb{Z}_{p}(r))  &  =\varprojlim H^{i}(X,(\mathbb{Z}{}%
/p^{m}\mathbb{Z}{})(r))\text{.}%
\end{align*}

\end{example}

\subsection{The functor $M\rightsquigarrow M^{F}$}

For a complex $M$ of graded $R$-modules, we define%
\begin{equation}
M^{F}=R\Hom(W,M). \label{eq55}%
\end{equation}
Then $M\rightsquigarrow M^{F}$ is a functor $\mathsf{D}^{+}(R)\rightarrow
\mathsf{D}(\mathbb{Z}{}_{p})$.

Let $\hat{R}$ denote the completion $\varprojlim R_{m}$ of $R$. From
\[
W\simeq R^{0}/R^{0}(1-F)\simeq R/R(1-F),
\]
we get an exact sequence%
\begin{equation}
0\rightarrow\hat{R}\overset{1-F}{\longrightarrow}\hat{R}\rightarrow
W\rightarrow0 \label{eq20}%
\end{equation}
of graded $R$-modules (\cite{ekedahl1985}, III, 1.5.1, p.90). If $M$ is in
$\mathsf{D}_{c}^{b}(R)$, then, because $M$ is complete,%
\begin{equation}
R\Hom(\hat{R},M)\simeq R\Hom(R,M)\simeq M^{0} \label{eq20a}%
\end{equation}
(isomorphisms of graded $R$-modules; ibid. I, 5.9.3ii, p.78). Now (\ref{eq20})
gives a canonical isomorphism
\begin{equation}
M^{F}\simeq s(M^{0}\overset{1-F}{\longrightarrow}M^{0}) \label{eq22}%
\end{equation}
(ibid. I, 1.5.4(i), p.90), which explains the notation. Note that%

\begin{equation}
s(M^{0}\overset{1-F}{\longrightarrow}M^{0})=\text{\textrm{Cone}}(1-F\colon
M^{0}\rightarrow M^{0})[-1]\text{.} \label{eq22a}%
\end{equation}

For $M,N$ in $\mathsf{D}_{c}^{b}(R)$, we have%
\begin{equation}
R\Hom(M,N)\overset{\text{(\ref{eq28})}}{\simeq}R\Hom(W,R\underline{\Hom}%
(M,N))\overset{\textup{{\tiny def}}}{=}R\underline{\Hom}(M,N)^{F}\label{eq38}%
\end{equation}
in $\mathsf{D}(\mathbb{Z}{}_{p})$.

\subsection{The functor
$M\rightsquigarrow\mathcal{M}_{\bullet}^{F}$}

Let $\mathcal{S}_{\bullet}$ denote the category of projective systems of
sheaves $(P_{m})_{m\in\mathbb{N}}$ on $(\mathsf{Pf}/S)_{\mathrm{et}}$ with
$P_{m}$ a sheaf of $\Lambda_{m}$-modules, and let $\mathcal{G}{}_{\bullet}$
denote the full subcategory of systems $(P_{m})_{m\in\mathbb{N}{}}$ with
$P_{m}$ in $\mathcal{G}{}$. Then $\mathcal{G}{}_{\bullet}$ is an abelian
subcategory of $\mathcal{S}{}_{\bullet}$ closed under extensions.

Let $M$ be a graded $R$-module, and let $M_{m}=R_{m}\otimes_{R}M$. Let
$\mathcal{M}_{m}^{i}{}$ denote the sheaf $\Spec(A)\rightsquigarrow M_{m}%
^{i}\otimes_{W}WA$ on $(\mathsf{Pf}/S)_{\mathrm{e}\mathrm{t}}$, and let
$\mathcal{\mathcal{M}{}}^{i}$ denote the projective system $(\mathcal{M}{}%
_{m}^{i})_{m\in\mathbb{N}{}}$. Thus $\mathcal{M}{}^{i}\in\mathcal{S}%
{}_{\bullet}$. Let $F$ (resp. $V$) denote the endomorphism of $\mathcal{M}%
{}^{i}$ defined by $F\otimes\sigma$ (resp. $V\otimes\sigma^{-1}$) on
$(M_{m}^{i}\otimes_{W}WA)_{m}$. In this way, we get an $R_{\bullet}$-module%
\[
\mathcal{M}{}_{\bullet}\colon\quad\cdots\rightarrow\mathcal{M}{}_{\bullet}%
^{i}\overset{d}{\longrightarrow}\mathcal{M}{}_{\bullet}^{i+1}\rightarrow\cdots
\]
in $\mathcal{S}{}_{\bullet}$. Cf. \cite{illusieR1983} IV, 3.6.3.

\begin{example}
\label{b14}Let $M=M^{0}$ be an elementary graded $R$-module of type I. For
each $m$, the map $1-F\colon\mathcal{M}{}_{m}\rightarrow\mathcal{M}{}_{m}$ is
surjective with kernel the \'{e}tale group scheme $\mathcal{M}{}_{m}^{F}$ over
$k$ corresponding to the natural representation of $\Gal(\bar{k}/k)$ on
$(M\otimes_{W}\bar{W})^{F\otimes\sigma}$ . Therefore $\mathcal{M}{}_{\bullet
}^{F}$ is a pro-\'{e}tale group scheme over $k$ with%
\[
\mathcal{M}{}_{\bullet}^{F}(\bar{k})\overset{\textup{{\tiny def}}%
}{=}\varprojlim\mathcal{M}{}_{m}^{F}(\bar{k})=(M\otimes_{W}\bar{W}%
)^{F\otimes\sigma}.
\]
Cf. (\ref{e4}) below.
\end{example}

\begin{example}
\label{b15}Let $M$ be an elementary graded $R$-module of type II. Then
$1-F\colon\mathcal{M}{}_{\bullet}^{i}\rightarrow\mathcal{M}{}_{\bullet}^{i}$
is bijective for $i=0$, and it is surjective with kernel canonically
isomorphic to $\mathbb{G}{}_{a}^{\mathrm{pf}}$ for $i=1$ (\cite{illusieR1983},
IV, 3.7, p.195).
\end{example}

\begin{proposition}
\label{b6}Let $M$ be a coherent graded $R$-module. For each $i$, the map
$1-F\colon\mathcal{M}{}_{\bullet}^{i}\rightarrow\mathcal{M}{}_{\bullet}^{i}$
is surjective, and its kernel $(\mathcal{M}{}_{\bullet}^{i})^{F}$ lies in
$\mathcal{G}{}_{\bullet}$. There is an exact sequence%
\[
0\rightarrow U^{i}\rightarrow(\mathcal{M}{}_{\bullet}^{F})^{i}\rightarrow
D^{i}\rightarrow0
\]
with $U^{i}$ a connected unipotent perfect algebraic group of dimension
$T^{i-1}(M)$ and $D^{i}$ the profinite \'{e}tale group corresponding to the
natural representation of $\Gal(\bar{k}/k)$ on $(\heartsuit^{i}M\otimes
_{W}\bar{W})^{F\otimes\sigma}$.
\end{proposition}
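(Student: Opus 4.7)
The plan is to proceed by dévissage on the coherence filtration of $M$ (see \ref{a5}), reducing to the two elementary base cases already treated in Examples \ref{b14} and \ref{b15}. Since $M$ admits a finite filtration whose successive quotients are degree shifts of elementary modules of type I or type II, I argue by induction on the length of such a filtration.

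For the base cases, I would verify the assertion on each elementary piece. For a degree shift $N\{-i_0\}$ of an elementary type I module $N$, a direct computation gives $\heartsuit^{i_0}(N\{-i_0\}) = N$ (since $V^{-\infty}Z^{i_0} = N$ and $F^{\infty}B^{i_0} = 0$) while $T^{i_0-1}$ vanishes; Example \ref{b14} then supplies the required conclusion with $U^{i_0} = 0$ and $D^{i_0}$ the prescribed étale group, and for $i \neq i_0$ every sheaf in sight vanishes. For a degree shift of an elementary type II module $U_l$ placed in degrees $i-1, i$, Example \ref{b15} gives surjectivity of $1-F$ on $\mathcal{M}_\bullet^i$ with kernel $\mathbb{G}_a^{\mathrm{pf}}$, a connected unipotent group of dimension $1 = T^{i-1}$; a short computation of $V^{-\infty}Z^i$ and $F^{\infty}B^i$ inside $U_l^i$ (using that $F$ shifts the index down on $U_l^1$ and that $F^\infty B^i = U_l^i$) yields $\heartsuit^i(U_l) = 0$, consistent with $D^i = 0$.

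For the inductive step, given a short exact sequence $0 \to M' \to M \to M'' \to 0$ of coherent graded $R$-modules, the functor $N \rightsquigarrow \mathcal{N}_\bullet^i$ is exact because $W(A)$ is flat over $W$ for perfect $k$-algebras $A$ and because $R_m \otimes_R -$ preserves exactness on coherent modules (\ref{a6}). Applying the snake lemma to multiplication by $1-F$ on each term of the resulting short exact sequence of $R_\bullet$-module sheaves, the inductively assumed surjectivity of $1-F$ on the outer terms yields surjectivity on the middle, and the kernels form a short exact sequence
\[
0 \to ((\mathcal{M}')^i_\bullet)^F \to (\mathcal{M}^i_\bullet)^F \to ((\mathcal{M}'')^i_\bullet)^F \to 0.
\]
Closure of $\mathcal{G}_\bullet$ under extensions then places $(\mathcal{M}^i_\bullet)^F$ in $\mathcal{G}_\bullet$.

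For the identification of $U^i$ and $D^i$, I use that in any short exact sequence in $\mathcal{G}$ the identity components form a short exact sequence and so do the étale quotients (snake lemma applied to $G^\circ \hookrightarrow G$), so $\dim U^i$ and the rank of $D^i$ are additive. Since $T^{i-1}$ is additive by Lemma \ref{b4} and matches $\dim U^i$ in the base cases, the equality $\dim U^i = T^{i-1}(M)$ propagates by induction; an analogous argument handles $D^i$. The principal technical obstacle lies here: $\heartsuit^i(-)$ is not literally an exact functor on short exact sequences of coherent $R$-modules, but combining the long exact sequence arising from $(M^\bullet, d)$ viewed as a complex of $R^0$-modules with the exactness of the functor $(-\otimes_W \bar{W})^{F\otimes\sigma}$ on slope-zero $F$-isocrystals over $\bar{K}$ and on the appropriate torsion $R^0$-modules lets one verify the required additivity step by step along the coherence filtration, thereby matching the étale quotient $D^i$ with the Galois module attached to $(\heartsuit^i M \otimes_W \bar{W})^{F\otimes\sigma}$.
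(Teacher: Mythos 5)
Your dévissage strategy mirrors the paper's proof, which reduces to Examples \ref{b14} and \ref{b15} on elementary modules and then cites Illusie--Raynaud (IV 3.10, 3.11) for the extension to general coherent $R$-modules. However, the key step you supply in place of that citation contains a genuine error.

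You assert that the functor $N \rightsquigarrow \mathcal{N}_{\bullet}^{i}$ is exact on short exact sequences of coherent graded $R$-modules, appealing to flatness of $WA$ over $W$ and to exactness of $R_{m}\otimes_{R}-$. Flatness of $WA$ is fine, but $R_{m}\otimes_{R}-$ is only right exact, and it genuinely fails to be left exact even on the simplest coherent modules. Take the degree-zero type I module $M=W$ (with $F=\sigma$, $V=p\sigma^{-1}$), $M'=pW$, $M''=k$: then $M_{m}=W/p^{m}W$ and $M'_{m}=pW/p^{m+1}W$, and the induced map $M'_{m}\to M_{m}$ has nonzero kernel $p^{m}W/p^{m+1}W$. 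So the short exact sequence of sheaves you want to feed into the snake lemma does not exist at the level of each $m$, and your inductive step collapses. Reference \ref{a6} does not say what you want it to; it describes completeness and Ekedahl's coherence criterion in terms of the \emph{derived} functor $R_{\bullet}\otimes_{R}^{L}-$, not exactness of the underived $R_{m}\otimes_{R}-$.

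What actually rescues the argument --- and what Illusie--Raynaud's IV 3.10, 3.11 is doing in the background --- is that the offending $\Tor_{1}^{R}(R_{m},M'')$ form a \emph{pro-zero} system (in the example above the transition maps $\Tor_{1}^{R}(R_{m+1},M'')\to\Tor_{1}^{R}(R_{m},M'')$ vanish), so that after passage to the pro-category, or after applying $(-)^{F}$ and $R\varprojlim$, the sequence is again exact. A correct dévissage must either work in $\mathsf{D}(R_{\bullet})$ with the derived functor throughout, or explicitly establish the pro-nullity of those Tor terms for coherent modules. You flag a related difficulty in the last paragraph (non-exactness of $\heartsuit^{i}$) but not this more basic one, and the proposed fix for $\heartsuit^{i}$ is not worked out in enough detail to verify.
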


\begin{proof}
When $M$ is an elementary graded $R$-module, the proposition is proved in the
two examples. The proof can be extended to all coherent graded $R$-modules by
using \cite{illusieR1983}, IV 3.10, 3.11, p.196.
\end{proof}

\begin{corollary}
\label{b6d}Let $M$ be a coherent graded $R$-module, and let $H^{i}%
(M)=Z^{i}(M)/B^{i}(M)$. Then $D^{i}(\bar{k})\overset{\textup{{\tiny def}}%
}{=}\varprojlim_{m}D_{m}^{i}(\bar{k})$ is a finitely generated $\mathbb{Z}%
{}_{p}$-module, and%
\[
D^{i}(\bar{k})\otimes_{\mathbb{Z}{}_{p}}\mathbb{Q}{}_{p}\simeq(H^{i}%
(M)\otimes_{W}\bar{K})^{F\otimes\sigma}\text{.}%
\]

\end{corollary}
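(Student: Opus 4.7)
The plan is to reduce the corollary to a computation with the heart. By Proposition~\ref{b6}, the \'etale piece $D^i$ corresponds to the $\Gal(\bar k/k)$-module $(\heartsuit^i(M)\otimes_W\bar W)^{F\otimes\sigma}$, so passing to $\bar k$-points and taking the projective limit yields $D^i(\bar k)=(\heartsuit^i(M)\otimes_W\bar W)^{F\otimes\sigma}$ (the projective limit matches the fixed points because $\heartsuit^i(M)\otimes_W\bar W$ is a finitely generated, hence complete, $\bar W$-module). From (\ref{a18}), $\heartsuit^i(M)$ is a finitely generated $W$-module, so $\heartsuit^i(M)\otimes_W\bar W$ is a finitely generated $\bar W$-module equipped with the $\sigma$-linear endomorphism $F\otimes\sigma$.

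To prove finite generation of $D^i(\bar k)$ over $\mathbb{Z}_p$, I would decompose (non-canonically) $\heartsuit^i(M)=T\oplus L$ with $T$ finite torsion and $L$ free of finite rank over $W$. The fixed points $(T\otimes_W\bar W)^{F\otimes\sigma}$ form a finite group, while by Dieudonn\'e--Manin $(L\otimes_W\bar W)^{F\otimes\sigma}$ is a $\mathbb{Z}_p$-lattice in the slope-zero subspace of the $F$-isocrystal $L\otimes_W\bar K$; in both cases the output is finitely generated over $\mathbb{Z}_p$, and $D^i(\bar k)$ is sandwiched between these contributions.

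For the isogeny statement I invert $p$: the torsion $T\otimes_W\bar W$ dies, and Dieudonn\'e--Manin gives
\[
D^i(\bar k)\otimes_{\mathbb{Z}_p}\mathbb{Q}_p\simeq(\heartsuit^i(M)\otimes_W\bar K)^{F\otimes\sigma}.
\]
Finally, the canonical isomorphism $\heartsuit^i(M)_K\simeq H^i(M)_K$ of $F$-isocrystals furnished by (\ref{a18}) base-changes to $\bar K$ and passes to $F\otimes\sigma$-fixed points to give the required identification with $(H^i(M)\otimes_W\bar K)^{F\otimes\sigma}$. The only genuine input is Proposition~\ref{b6}; the remaining manipulations are standard Dieudonn\'e theory, and the main bookkeeping is to check carefully that finite torsion in $\heartsuit^i(M)$ disappears after inverting $p$ and that the projective limit defining $D^i(\bar k)$ really does agree with the continuous fixed points over $\bar W$.
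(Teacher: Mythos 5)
Your argument follows the paper's proof exactly: the paper simply says to apply Proposition~\ref{b6} to identify $D^i(\bar k)$ with $(\heartsuit^i M\otimes_W\bar W)^{F\otimes\sigma}$ and then invoke (\ref{a18}) for finite generation of $\heartsuit^i(M)$ over $W$ and the isogeny $\heartsuit^i(M)_K\simeq H^i(M)_K$. You are just spelling out the routine Dieudonn\'e--Manin bookkeeping (torsion/free splitting, finiteness of fixed points) that the paper leaves implicit, so this is the same proof in expanded form.
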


\begin{proof}
According to (\ref{b6}), $D^{i}(\bar{k})\simeq(\heartsuit^{i}M\otimes_{W}%
\bar{W})^{F\otimes\sigma}$. Now the statement follows from (\ref{a18}).
\end{proof}

Let $\Gamma(S_{\mathrm{et}},-)$ denote the functor
\[
(M_{m})_{m\in\mathbb{N}{}}\rightsquigarrow\varprojlim\Gamma(S_{\mathrm{et}%
},M_{m})\colon\mathcal{S}{}_{\bullet}\rightarrow\Mod(\mathbb{Z}{}_{p})\text{.}%
\]
It derives to a functor $R\Gamma(S_{\mathrm{et}},-)\colon\mathsf{D}%
(\mathcal{S}{}_{\bullet})\rightarrow\mathsf{D}(\mathbb{Z}{}_{p})$.

For a coherent graded $R$-module $M$, the system $\mathcal{M}_{\bullet}{}$
depends only on the projective system $M_{\bullet}=(M_{m})_{m}$. The functor
$M_{\bullet}\rightsquigarrow\mathcal{M}{}_{\bullet}\colon\Mod(R_{\bullet
})\rightarrow\mathcal{S}{}_{\bullet}$ is exact, and so it defines a functor%
\[
M_{\bullet}\rightsquigarrow\mathcal{M}{}_{\bullet}\colon\mathsf{D}(R_{\bullet
})\rightarrow\mathsf{D}(\mathcal{S}{}_{\bullet}).
\]
Let
\begin{equation}
\mathcal{M}{}_{\bullet}^{F}=\text{\textrm{Cone}}(\mathcal{M}_{\bullet}%
^{0}\overset{1-F}{\longrightarrow}{}\mathcal{M}_{\bullet}^{0})[-1].
\label{eq54}%
\end{equation}

\begin{proposition}
\label{b6e}The following diagram commutes:
\[
\begin{tikzcd}[column sep=large, row sep=large]
\mathsf{D}_{c}^{b}(R)\arrow{r}{M\rightsquigarrow M_{\bullet}}\arrow{rd}[swap]
{M\rightsquigarrow\hat{M}}
& \mathsf{D}^{b}(R_{\bullet})\arrow{r}{(-)^{F}}\arrow{d}{R\varprojlim} & \mathsf{D}^{b}(\mathcal{S}%
{}_{\bullet})\arrow{d}{R\Gamma(S_{\mathrm{et}},-)} \\
& \mathsf{D}^{b}(R)\arrow{r}{(-)^{F}} & \mathsf{D}^{b}(\mathbb{Z}{}_{p}).
\end{tikzcd}
\]
The functor $(-)^{F}$ on the top row (resp. bottom row) is that defined in
(\ref{eq54}) (resp. (\ref{eq55}). In other words, for $M$ in $\mathsf{D}%
_{c}^{b}(R)$,%
\[
R\Gamma(S_{\mathrm{et}},\mathcal{M}{}_{\bullet}^{F})\simeq M^{F}.
\]

\end{proposition}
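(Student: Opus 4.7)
The plan is to reduce the commutativity to a single levelwise cohomology computation and then assemble via the triangulated structure of $R\Gamma(S_{\mathrm{et}},-)$. By Ekedahl's criterion \ref{a6}, coherence forces $M\simeq\hat{M}$, so the bottom composition sends $M$ to $M^{F}=R\Hom(W,M)$, which by (\ref{eq22}) and (\ref{eq22a}) is naturally identified with $\mathrm{Cone}(1-F\colon M^{0}\to M^{0})[-1]$. For the top composition, applying the triangulated functor $R\Gamma(S_{\mathrm{et}},-)$ to the defining triangle (\ref{eq54}) of $\mathcal{M}_{\bullet}^{F}$ yields
\begin{equation*}
R\Gamma(S_{\mathrm{et}},\mathcal{M}_{\bullet}^{F})\simeq\mathrm{Cone}\bigl(1-F\colon R\Gamma(S_{\mathrm{et}},\mathcal{M}_{\bullet}^{0})\to R\Gamma(S_{\mathrm{et}},\mathcal{M}_{\bullet}^{0})\bigr)[-1].
\end{equation*}
Hence everything reduces to producing a natural $F$-equivariant isomorphism $R\Gamma(S_{\mathrm{et}},\mathcal{M}_{\bullet}^{0})\simeq M^{0}$ in $\mathsf{D}(\mathbb{Z}_p)$.

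I would compute the left-hand side as the composition of levelwise $R\Gamma(S_{\mathrm{et}},-)$ followed by $R\varprojlim_{m}$. For each $m$, the sheaf $\mathcal{M}_{m}^{0}$ is representable by a commutative perfect algebraic group over $k$ whose underlying functor $\Spec A\mapsto M_{m}^{0}\otimes_{W}WA$ is built from the Witt group $W_{m}$ by dévissage compatible with étale base change. Sections over $\Spec k$ give $\Gamma(S_{\mathrm{et}},\mathcal{M}_{m}^{0})=M_{m}^{0}\otimes_{W}W=M_{m}^{0}$, and higher étale cohomology vanishes by an additive Hilbert 90 argument: the exact sequence $0\to\mathbb{G}_{a}^{\mathrm{pf}}\to W_{m}\to W_{m-1}\to 0$ reduces $H^{i}_{\mathrm{et}}(\Spec k,W_{m})=0$ for $i>0$ to $H^{i}_{\mathrm{et}}(\Spec k,\mathbb{G}_{a}^{\mathrm{pf}})=H^{i}(\Gal(\bar{k}/k),\bar{k})$, which vanishes by the normal basis theorem since $k$ is perfect.

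With $R\Gamma(S_{\mathrm{et}},\mathcal{M}_{m}^{0})\simeq M_{m}^{0}$ concentrated in degree zero and carrying the correct $F$-action, the surjective system $(M_{m}^{0})_{m}$ (itself a consequence of the structure of $R_{m}\otimes_{R}M$) is Mittag-Leffler, so $R^{1}\varprojlim M_{m}^{0}=0$. Therefore $R\varprojlim_{m}M_{m}^{0}=\varprojlim_{m}M_{m}^{0}=M^{0}$, the last equality by completeness of $M$. Combining with the cone computation of the first paragraph proves the proposition.

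The main obstacle is carrying out the dévissage of $\mathcal{M}_{m}^{0}$ and the higher-cohomology vanishing with sufficient generality; potential Galois-twisting in the general structural presentation is a real subtlety. A cleaner alternative is to reduce at the outset to elementary graded $R$-modules via the additivity of both compositions on distinguished triangles: for type II modules both sides vanish by Example \ref{b15} (where $1-F$ is an isomorphism), while for type I, topological nilpotence of $V$ realises $\mathcal{M}_{m}^{0}$ as a finite iterated extension of copies of $W_{m}$, so the Hilbert 90 argument applies verbatim. This reduction sidesteps the Galois-twisting issue entirely and is the route I expect to take in writing out the details.
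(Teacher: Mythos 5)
Your proposal is correct and takes essentially the same route as the paper, whose proof is a one-sentence remark that the result follows from (\ref{eq22}), (\ref{eq22a}) and the definitions --- i.e., both sides become $\mathrm{Cone}(1-F)[-1]$ applied to the degree-zero part, and the identification $R\Gamma(S_{\mathrm{et}},\mathcal{M}_{\bullet}^{0})\simeq M^{0}$ is left implicit, which is exactly the step you spell out via d\'evissage to $\mathbb{G}_{a}^{\mathrm{pf}}$, additive Hilbert 90, and Mittag--Leffler plus completeness. The Galois-twisting you flag as a potential obstacle is not in fact an issue: $\mathcal{M}_{m}^{0}$ is by construction $M_{m}^{0}\otimes_{W}W(-)$ with $M_{m}^{0}$ a finitely generated $W_{m}$-module over $W=W(k)$, so its composition series with quotients $\mathbb{G}_{a}^{\mathrm{pf}}$ is already defined over $k$, and in any case the Hilbert 90 vanishing applies to any discrete $\Gal(\bar{k}/k)$-module that is a $\bar{k}$-vector space with semilinear action.
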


\begin{proof}
This follows directly from the definitions and the isomorphism (\ref{eq22},
\ref{eq22a})%
\[
M^{F}\simeq\text{\textrm{Cone}}(1-F\colon M^{0}\rightarrow M^{0})[-1]\text{.}%
\]

\end{proof}

\begin{proposition}
\label{b6b}Let $M\in\mathsf{D}_{c}^{b}(R)$, and let $r\in\mathbb{Z}{}$. For
each $j,$ there is an exact sequence%
\[
0\rightarrow U^{j}\rightarrow H^{j}(\mathcal{M(}r){}_{\bullet}^{F})\rightarrow
D^{j}\rightarrow0
\]
with $U^{j}$ a connected unipotent perfect algebraic group of dimension
$T^{r-1,j-r}$ and $D^{j}$ the profinite \'{e}tale group corresponding to the
natural representation of $\Gal(\bar{k}/k)$ on $(\heartsuit^{r}\left(
H^{j}(M)\right)  \otimes\bar{W})^{F\otimes\sigma}$.
\end{proposition}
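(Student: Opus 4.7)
The plan is to reduce to Proposition \ref{b6} via the distinguished triangle defining the $F$-fixed construction on complexes. By (\ref{eq54}), $\mathcal{M}(r)_\bullet^F = \mathrm{Cone}(1-F)[-1]$ applied to the complex $\mathcal{M}(r)_\bullet^{0}$ of sheaves, giving a distinguished triangle
$$\mathcal{M}(r)_\bullet^F \to \mathcal{M}(r)_\bullet^{0} \xrightarrow{1-F} \mathcal{M}(r)_\bullet^{0} \xrightarrow{+1}$$
in $\mathsf{D}^{b}(\mathcal{S}_\bullet)$, whose long exact sequence of cohomology sheaves I want to analyse.

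The first task is to identify $H^{j}(\mathcal{M}(r)_\bullet^{0})$. Unwinding the Tate twist of (\ref{a8}), the complex $M(r)^{0,\bullet}$ of $R^{0}$-modules is, up to signs, the complex $M^{r,\bullet-r}$, whose $j$th cohomology is the degree-$r$ graded piece of the coherent graded $R$-module $H^{j-r}(M)$, in the notation of (\ref{a9}). Since the sheafification $N_\bullet \rightsquigarrow \mathcal{N}_\bullet$ of (\ref{b6e}) is exact on $\Mod(R_\bullet)$, it commutes with $H^{j}$, so $H^{j}(\mathcal{M}(r)_\bullet^{0})$ is canonically the projective system of sheaves attached to the degree-$r$ piece of the coherent graded $R$-module $N := H^{j-r}(M)$.

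Now apply Proposition \ref{b6} to $N$ at grading $i=r$. This simultaneously gives the surjectivity of $1-F$ on $\mathcal{N}_\bullet^{r}$ and produces a short exact sequence
$$0 \to U \to (\mathcal{N}_\bullet^{r})^{F} \to D \to 0$$
with $U$ a connected unipotent perfect algebraic group of dimension $T^{r-1}(N) = T^{r-1,j-r}(M)$ and $D$ the profinite étale group corresponding to the Galois action on $(\heartsuit^{r}(N) \otimes_{W} \bar{W})^{F\otimes\sigma}$, which is the structure claimed for $U^{j}$ and $D^{j}$. Because $1-F$ is surjective on each $H^{j}(\mathcal{M}(r)_\bullet^{0})$, all connecting morphisms in the long exact sequence vanish and it degenerates into the short exact sequences
$$0 \to H^{j}(\mathcal{M}(r)_\bullet^{F}) \to H^{j}(\mathcal{M}(r)_\bullet^{0}) \xrightarrow{1-F} H^{j}(\mathcal{M}(r)_\bullet^{0}) \to 0,$$
identifying $H^{j}(\mathcal{M}(r)_\bullet^{F})$ with $(\mathcal{N}_\bullet^{r})^{F}$ and finishing the argument.

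The main obstacle I expect is the identification in step 2 at the derived level: one must verify that the composite $M \rightsquigarrow M_\bullet = R_\bullet \otimes_{R}^{L} M \rightsquigarrow \mathcal{M}(r)_\bullet^{0}$ commutes with taking $j$th cohomology in the column direction, so that $H^{j}(\mathcal{M}(r)_\bullet^{0})$ really is the sheafification of $H^{j-r}(M)^{r}$ with no spurious contributions from higher derived terms of the completion. This relies on Ekedahl's completeness criterion (\ref{a6}) together with the module-level exactness of $M_\bullet \rightsquigarrow \mathcal{M}_\bullet$ from (\ref{b6e}); a fallback devissage, reducing by distinguished triangles to the case of a shift of an elementary $R$-module and then invoking Examples \ref{b14}--\ref{b15}, would also close the argument if any subtlety arose.
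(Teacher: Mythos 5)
Your plan is exactly the paper's one-line proof unwound: the paper says ``Apply Proposition \ref{b6} to $H^{j}(M(r))$ with $i=0$,'' and since $H^{j}(M(r))^{i}=H^{j-r}(M)^{i+r}$ (from $M(r)^{i,j}=M^{i+r,j-r}$), that is precisely your application of Proposition \ref{b6} to $N=H^{j-r}(M)$ at grading $i=r$. The cone triangle and the observation that surjectivity of $1-F$ on each cohomology sheaf collapses the long exact sequence are the correct and intended unwinding.

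Two points worth flagging. First, your computation produces $D$ corresponding to $(\heartsuit^{r}(H^{j-r}(M))\otimes\bar{W})^{F\otimes\sigma}$, while the proposition as printed reads $\heartsuit^{r}(H^{j}(M))$. You asserted these match, but they do not literally: the discrepancy is a shift of the cohomological index by $r$. The $T^{r-1,j-r}=T^{r-1}(H^{j-r}(M))$ appearing in the same statement already carries the shift $j\mapsto j-r$, and the slope spectral sequence places $H^{j-r}(M)^{r}_{K}$ (not $H^{j}(M)^{r}_{K}$) in slope $[r,r+1[$ of $H^{j}(sM)_{K}$, so the index in the printed proposition appears to be a slip and your derived formula is what the proof actually gives. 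You should have noted the mismatch rather than declared agreement. Second, your caveat about whether $H^{j}$ commutes with $R_{\bullet}\otimes_{R}^{L}(-)$ before sheafification is a real subtlety (there is a Tor spectral sequence that could in principle contribute), and the paper's one-line proof passes over it silently as well; your appeal to Ekedahl's completeness criterion and the exactness of sheafification identifies where the justification must come from, though the dévissage fallback as stated would only give $H^{j}(\mathcal{M}(r)_{\bullet}^{F})\in\mathcal{G}_{\bullet}$ and not automatically the precise extension structure, since the long exact sequence of a distinguished triangle does not by itself split off the claimed $U^{j}$ and $D^{j}$.
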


\begin{proof}
Apply (\ref{b6}) to $H^{j}(M(r))$ with $i=0$.
\end{proof}

\begin{corollary}
\label{b6f}The $\mathbb{Z}{}_{p}$-module $D^{j}(\bar{k})$ is finitely
generated, and%
\begin{equation}
D^{j}(\bar{k})\otimes_{\mathbb{Z}{}_{p}}\mathbb{Q}{}_{p}\simeq(H^{r}%
(H^{j}(M))\otimes\bar{K})^{F\otimes\sigma}\text{.} \label{eq60}%
\end{equation}
Here $H^{r}(H^{j}(M))$ is the $E_{2}^{r,j}$ term in the slope spectral
sequence for $M$.
\end{corollary}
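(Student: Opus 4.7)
The plan is to combine the explicit description of $D^{j}$ furnished by Proposition \ref{b6b} with the identification of the heart modulo torsion from \ref{a18}, and then to invoke the standard compatibility between $F$-crystal fixed points and $F$-isocrystal fixed points.

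The starting point is Proposition \ref{b6b}, which identifies the pro-\'etale group $D^{j}$ with the Galois representation on $(\heartsuit^{r}(H^{j}(M))\otimes_{W}\bar{W})^{F\otimes\sigma}$. Taking $\bar{k}$-points gives
\[
D^{j}(\bar{k})\simeq(\heartsuit^{r}(H^{j}(M))\otimes_{W}\bar{W})^{F\otimes\sigma},
\]
which is also what Corollary \ref{b6d} yields when applied to the coherent graded $R$-module $H^{j}(M(r))$ at graded degree zero.

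Next, by \ref{a18}, for every coherent graded $R$-module $N$ the $W$-module $\heartsuit^{r}(N)$ is finitely generated, and the canonical map $\heartsuit^{r}(N)_{K}\to H^{r}(N)_{K}$ is an isomorphism: the inclusions $B^{r}\subset F^{\infty}B^{r}\subset V^{-\infty}Z^{r}\subset Z^{r}$ from (\ref{eq58}) have successive quotients of finite $W$-length, which vanish after inverting $p$. Specializing to $N=H^{j}(M)$ gives an $F$-equivariant isomorphism $\heartsuit^{r}(H^{j}(M))_{K}\simeq H^{r}(H^{j}(M))_{K}$.

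Finally, I would appeal to the standard Dieudonn\'e-theoretic fact that for a finitely generated $W$-module $L$ equipped with a $\sigma$-linear endomorphism $F$, the fixed-point module $(L\otimes_{W}\bar{W})^{F\otimes\sigma}$ is finitely generated over $\mathbb{Z}{}_{p}$, and the natural map
\[
(L\otimes_{W}\bar{W})^{F\otimes\sigma}\otimes_{\mathbb{Z}{}_{p}}\mathbb{Q}{}_{p}\;\simeq\;(L\otimes_{W}\bar{K})^{F\otimes\sigma}
\]
is an isomorphism, both sides having rank equal to the multiplicity of slope zero in $L_{K}$ (the torsion part of $L$ contributes a finite group of fixed points since $(L^{\mathrm{tors}}\otimes\bar{W})^{F\otimes\sigma}$ is cut out by polynomial equations over $\bar{k}$). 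Applied with $L=\heartsuit^{r}(H^{j}(M))$ and combined with the preceding step, this simultaneously delivers the finite generation of $D^{j}(\bar{k})$ over $\mathbb{Z}{}_{p}$ and the isomorphism (\ref{eq60}). The only real obstacle is bookkeeping: one must reconcile the Tate-twist indexing in Proposition \ref{b6b} (where $r$ enters through $M(r)$) with the slope spectral sequence description in the statement; once that is settled, the proof is a formal combination of the cited results.
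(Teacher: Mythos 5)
Your proposal is correct and follows essentially the same route as the paper, which simply says ``Apply (\ref{b6d}) to $H^{j}(M)$''; you have unpacked that reference chain (Proposition \ref{b6b} for $D^{j}(\bar{k})\simeq(\heartsuit^{r}(H^{j}(M))\otimes\bar{W})^{F\otimes\sigma}$, then \ref{a18} for $\heartsuit^{r}(H^{j}(M))_{K}\simeq H^{r}(H^{j}(M))_{K}$, then the classical statement about fixed points of $F\otimes\sigma$ on Dieudonn\'e modules), and the Tate-twist bookkeeping you flag is in fact already absorbed correctly, since applying Proposition \ref{b6} to $H^{j}(M(r))$ in graded degree $0$ is the same as applying it to $H^{j}(M)$ in graded degree $r$.
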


\begin{proof}
Apply (\ref{b6d}) to $H^{j}(M)$.
\end{proof}

\subsection{The functors $R\mathcal{H}om$}

If $M,N$ in $\mathsf{D}_{c}^{b}(R)$, then $P\overset{\textup{{\tiny def}}%
}{=}R\underline{\Hom}(M,N)$ lies in $\mathsf{D}_{c}^{b}(R)$ (see \S 3). Let%
\[
R\mathcal{H}{}om(M,N)=\mathcal{P}{}_{\bullet}^{F}\text{.}%
\]
Then $R\mathcal{H}{}om$ is a bifunctor%
\[
R\mathcal{H}{}om\colon\mathsf{D}_{c}^{b}(R)\times\mathsf{D}_{c}^{b}%
(R)\rightarrow\ \mathsf{D}_{\mathcal{G}{}_{\bullet}}^{b}(\mathcal{S}%
{}_{\bullet})
\]
(denoted by $R\underline{\Hom}_{R}$ in \cite{ekedahl1986}, p.11, except that
he allows graded homomorphisms of any degree).

\begin{proposition}
\label{b6a}For $M,N\in\mathsf{D}_{c}^{b}(R)$,%
\begin{equation}
R\Gamma(S_{\mathrm{et}},R\mathcal{H}om(M,N))\simeq R\Hom(M,N). \label{eq59}%
\end{equation}

\end{proposition}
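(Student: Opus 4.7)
The plan is to chain together the definition of $R\mathcal{H}om$ with Proposition \ref{b6e} applied to the internal Hom complex, and then invoke the adjunction identity (\ref{eq38}).

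First I would set $P = R\underline{\Hom}(M,N)$. By the discussion in \S 3, $P$ lies in $\mathsf{D}_{c}^{b}(R)$ whenever $M$ and $N$ do; this is precisely what makes the construction $\mathcal{P}_{\bullet}^{F}$ meaningful via the functor $M \rightsquigarrow \mathcal{M}_{\bullet}$ from $\mathsf{D}_{c}^{b}(R)$ to $\mathsf{D}^{b}(\mathcal{S}_{\bullet})$. By the very definition of $R\mathcal{H}om$, we have
\[
R\mathcal{H}om(M,N) = \mathcal{P}_{\bullet}^{F}.
\]

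Next, I would apply Proposition \ref{b6e} to the object $P \in \mathsf{D}_{c}^{b}(R)$. That proposition states that the outer square of the diagram there commutes, so in particular
\[
R\Gamma(S_{\mathrm{et}}, \mathcal{P}_{\bullet}^{F}) \simeq P^{F}.
\]
Finally, the identity (\ref{eq38}), namely
\[
R\Hom(M,N) \simeq R\underline{\Hom}(M,N)^{F} = P^{F},
\]
allows me to conclude
\[
R\Gamma(S_{\mathrm{et}}, R\mathcal{H}om(M,N)) \simeq P^{F} \simeq R\Hom(M,N).
\]

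There is no real obstacle here: the statement is essentially a formal consequence of the constructions already in place. The only point worth checking is that the isomorphism (\ref{eq38}) is natural enough that the composition of the two displayed isomorphisms is canonical (and not merely an abstract isomorphism), but this follows from the naturality of the adjunction $R\Hom(W,R\underline{\Hom}(-,-)) \simeq R\Hom(-,-)$ together with the canonical nature of the isomorphism in Proposition \ref{b6e}, which is built out of the natural description of $M^{F}$ as the cone of $1-F$ on $M^{0}$ given in (\ref{eq22}) and (\ref{eq22a}).
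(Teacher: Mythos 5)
Your proof is correct and follows exactly the paper's own argument: set $P=R\underline{\Hom}(M,N)$, apply Proposition \ref{b6e} to get $R\Gamma(S_{\mathrm{et}},\mathcal{P}_{\bullet}^{F})\simeq P^{F}$, and conclude via (\ref{eq38}). The remark on naturality is a harmless addition; nothing further is needed.
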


\begin{proof}
From (\ref{b6e}) with $P=R\underline{\Hom}(M,N)$, we find that%
\[
R\Gamma(S_{\mathrm{et}},R\mathcal{H}om(M,N))\simeq R\underline{\Hom}%
(M,N)^{F}\text{.}%
\]
But $R\underline{\Hom}(M,N)^{F}\simeq R\Hom(M,N)$ (see (\ref{eq38})).
\end{proof}

For $M,N$ in $\mathsf{D}_{c}^{b}(R)$, we let%
\begin{align*}
\Ext^{j}(M,N) &  =H^{j}(R\Hom(M,N))\\
\underline{\Ext}^{j}(M,N) &  =H^{j}(R\underline{\Hom}(M,N))\\
\mathcal{E}{}xt^{j}(M,N) &  =H^{j}(R\mathcal{H}{}om(M,N)).
\end{align*}
The first is a $\mathbb{Z}{}_{p}$-module, the second is a coherent graded
$R$-module, and the third is an object of $\mathcal{G}{}_{\bullet}$. From
(\ref{eq28}) and (\ref{eq59}) we get spectral sequences%
\begin{align*}
\Ext^{i}(W,\underline{\Ext}^{j}(M,N)) &  \implies\Ext^{i+j}(M,N)\\
R^{i}\Gamma(S_{\mathrm{et}},\mathcal{E}{}xt^{j}(M,N)) &  \implies
\Ext^{i+j}(M,N)\text{.}%
\end{align*}
The identity component of $\mathcal{E}{}xt^{j}(M,N)$ is a perfect algebraic
group of dimension $T^{i-1,j}(M,N)$ where%
\[
T^{i,j}(M,N)\overset{\textup{{\tiny def}}}{=}T^{i,j}(R\underline{\Hom}%
(M,N))=T^{i}(\underline{\Ext}^{j}(M,N))\text{.}%
\]
For example, it follows from (\ref{eq28c}) that%
\begin{align*}
\underline{\Ext}^{j}(W,M) &  =H^{j}(M)\\
\mathcal{E}{}xt^{j}(W,M) &  =H^{j}(\mathcal{M}{}_{\bullet}^{F}),\quad
\text{and}\\
T^{i,j}(W,M) &  =T^{i,j}(M).
\end{align*}

\begin{aside}
\label{b6c}If $M\in\mathsf{D}_{c}^{b}(R)$, then the dual%
\[
D(M)\overset{\textup{{\tiny def}}}{=}R\underline{\Hom}(M,W)
\]
of $M$ also lies in $\mathsf{D}_{c}^{b}(R)$. If $M,N\in D_{c}^{b}(R)$, then%
\[
D(M)\hat{\ast}N\simeq R\underline{\Hom}(M,N)
\]
(see \cite{illusie1983}, 2.6.3.4). In particular,%
\[
T^{i,j}(M,N)=T^{i,j}(D(M)\hat{\ast}N).
\]

\end{aside}

\section{The proof of the main theorem}

Throughout this section, $\Gamma$ is a profinite group isomorphic to
$\mathbb{\hat{Z}}{}$, and $\gamma$ is a topological generator for $\Gamma$.
For a $\Gamma$-module $M$, the kernel and cokernel of $1-\gamma\colon
M\rightarrow M$ are denoted by $M^{\Gamma}$ and $M_{\Gamma}$ respectively.

\subsection{Elementary preliminaries}

Let $[S]$ denote the cardinality of a set $S$. For a homomorphism $f\colon
M\rightarrow N$ of abelian groups, we let%
\[
z(f)=\frac{[\Ker(f)]}{[\Coker(f)]}%
\]
when both cardinalities are finite.

\begin{lemma}
\label{a1}Let $M$ be a finitely generated $\mathbb{Z}{}_{p}$-module with an
action of $\Gamma$, and let $f\colon M^{\Gamma}\rightarrow M_{\Gamma}$ be the
map induced by the identity map on $M$. Then $z(f)$ is defined if and only if
$1$ is not a multiple root of the minimum polynomial $\gamma$ on $M$, in which
case $M^{\Gamma}$ has rank equal to the multiplicity of $1$ as an eigenvalue
of $\gamma$ on $M_{\mathbb{Q}{}_{p}}$ and%
\[
z(f)=\left\vert \prod\nolimits_{i,\,\,a_{i}\neq1}(1-a_{i})\right\vert _{p}%
\]
where $\left(  a_{i}\right)  _{i\in I}$ is the family of eigenvalues of
$\gamma$ on $M_{\mathbb{Q}{}_{p}}$.
\end{lemma}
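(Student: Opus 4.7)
My plan is to analyze $f$ via the defining four-term exact sequence $0 \to M^\Gamma \to M \xrightarrow{1-\gamma} M \to M_\Gamma \to 0$ and then to compute $z(f)$ by choosing convenient $\mathbb{Z}_p$-bases, reducing first to the torsion-free case. Factor $f$ as the inclusion $M^\Gamma \hookrightarrow M$ followed by the projection $M \twoheadrightarrow M_\Gamma$, so that $\Ker f = M^\Gamma \cap (1-\gamma)M$ and $\Coker f = M/(M^\Gamma + (1-\gamma)M)$. Tensoring with $\mathbb{Q}_p$ and decomposing $V = M \otimes_{\mathbb{Z}_p} \mathbb{Q}_p = V_1 \oplus V'$ into the generalized $1$-eigenspace of $\gamma$ and the sum of the remaining generalized eigenspaces, the operator $1-\gamma$ is nilpotent on $V_1$ and invertible on $V'$, so $V^\Gamma \subset V_1$ and $V_\Gamma$ is a quotient of $V_1$. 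The rationalised $f_{\mathbb{Q}_p}$ is therefore an isomorphism iff $(1-\gamma)|_{V_1} = 0$, iff $1$ is a semisimple eigenvalue of $\gamma$, iff it is not a multiple root of the minimum polynomial. Under this hypothesis $V_1$ equals the honest $1$-eigenspace, whence $\rank_{\mathbb{Z}_p} M^\Gamma = \dim V_1$ is the multiplicity of $1$ as an eigenvalue of $\gamma$.

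Assume the hypothesis holds, and first take $M$ to be torsion-free. Then $M^\Gamma$ is a saturated $\mathbb{Z}_p$-submodule of $M$: if $p^k x \in M^\Gamma$ then $p^k(\gamma x - x) = 0$ forces $\gamma x = x$ by torsion-freeness. Hence $M^\Gamma$ is a $\mathbb{Z}_p$-module direct summand; choose a complement $N \subset M$. With respect to the decomposition $M = M^\Gamma \oplus N$, the matrix of $\gamma$ is block-triangular, say $\begin{pmatrix} I_m & B \\ 0 & D \end{pmatrix}$. By hypothesis $1$ is not an eigenvalue of $D$, so $I-D$ is injective on $N$ with cokernel of $p$-adic order $|\det(I-D)|_p^{-1}$. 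A direct block computation gives
\[
(1-\gamma)M = \{(-By,(I-D)y) : y \in N\},
\]
hence $M^\Gamma + (1-\gamma) M = M^\Gamma \oplus (I-D) N$, so $\Ker f = M^\Gamma \cap (1-\gamma)M = 0$ and $\Coker f = N/(I-D)N$. Since the eigenvalues of $D$ are precisely the $a_i \neq 1$, one obtains
\[
z(f_M) = |\det(I-D)|_p = \left| \prod\nolimits_{a_i \neq 1}(1-a_i) \right|_p.
\]

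For general $M$, the $\Gamma$-equivariant short exact sequence $0 \to M_{\mathrm{tors}} \to M \to M/M_{\mathrm{tors}} \to 0$ should reduce the problem to the torsion-free case, since $M/M_{\mathrm{tors}}$ has the same $V$ and hence the same target value, while $M_{\mathrm{tors}}$ is finite and satisfies $|M_{\mathrm{tors}}^\Gamma| = |(M_{\mathrm{tors}})_\Gamma|$, giving $z(f_{M_{\mathrm{tors}}}) = 1$. The main obstacle is establishing the multiplicativity $z(f_M) = z(f_{M_{\mathrm{tors}}}) \cdot z(f_{M/M_{\mathrm{tors}}})$, which requires careful order-tracking through the six-term exact sequence
\[
0 \to M_{\mathrm{tors}}^\Gamma \to M^\Gamma \to (M/M_{\mathrm{tors}})^\Gamma \to (M_{\mathrm{tors}})_\Gamma \to M_\Gamma \to (M/M_{\mathrm{tors}})_\Gamma \to 0
\]
coupled with the commutative squares linking $f_{M_{\mathrm{tors}}}$, $f_M$, and $f_{M/M_{\mathrm{tors}}}$; once this bookkeeping is carried out, the vanishing $z(f_{M_{\mathrm{tors}}}) = 1$ finishes the proof.
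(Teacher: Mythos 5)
Your torsion-free computation is correct: the saturation argument for $M^\Gamma$, the block-triangular form of $\gamma$, the identification $\Ker f=0$ and $\Coker f\cong N/(I-D)N$, the index $[N/(I-D)N]=|\det(I-D)|_p^{-1}$, and the observation that the eigenvalues of $D$ are exactly the $a_i\neq1$ (which uses the semisimplicity hypothesis to know $1$ has algebraic multiplicity $m=\rank M^\Gamma$) all check out. The gap is where you flag it: the multiplicativity $z(f_M)=z(f_{M_{\mathrm{tors}}})\cdot z(f_{M/M_{\mathrm{tors}}})$ is not a plain snake lemma, because the rows $A^\Gamma\to B^\Gamma\to C^\Gamma$ and $A_\Gamma\to B_\Gamma\to C_\Gamma$ that the maps $f_A,f_B,f_C$ connect are respectively only left-exact and right-exact, and the defect is the connecting map $\partial\colon C^\Gamma\to A_\Gamma$, which does not sit in a commuting square with any of the $f$'s. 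It can be pushed through (for instance by comparing the two spectral sequences of the double complex built from those two three-term rows, using finiteness of $A_\Gamma$ in the row filtration), but you have not carried this out, so the proof is incomplete at precisely this step.

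There is a shorter route that sidesteps the six-term sequence entirely. Set $T=1-\gamma$ and $I=\im T\subseteq M$. The isomorphism $\bar T\colon M/\Ker T\to I$ carries $(\Ker T+\im T)/\Ker T$ onto $T(I)$, so $\Coker f=M/(\Ker T+\im T)\cong I/T(I)$; and $\Ker f=\Ker T\cap I=\Ker(T|_I)$. Hence $z(f)=z(T|_I)$, where $T|_I$ is now an endomorphism of a finitely generated $\mathbb{Z}_p$-module whose rationalisation is invertible: under the hypothesis $T_V$ kills $V_1$ and is an isomorphism on $V'$, so $I_{\mathbb{Q}_p}=V'$ and $T_V|_{V'}$ is invertible. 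For such an endomorphism both kernel and cokernel are finite, the snake lemma applies directly to $0\to I_{\mathrm{tors}}\to I\to I/I_{\mathrm{tors}}\to0$ with all six terms finite, and one reduces at once to the free case (your Smith normal form computation) and the finite case (where $z=1$), giving $z(f)=|\det(T_V|_{V'})|_p=\bigl|\prod_{a_i\neq1}(1-a_i)\bigr|_p$.
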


\begin{proof}
Elementary and easy.\noindent
\end{proof}

\begin{lemma}
\label{b10}Consider a commutative diagram%
\[
\begin{CD}
\cdots@>>>C^{j-1}@.C^{j}@>{f^{j}}>>C^{j+1}@.\\
@.@VV{g^{j-1}}V@AA{h^{j}}A@VV{g^{j+1}}V\\
\cdots @>>> A^{j-1} @>{d^{j-1}}>> A^{j} @>{d^{j}}>> A^{j+1} @>>> \cdots\\
@.@VV{h^{j-1}}V@AA{g^{j}}A@VV{h^{j+1}}V\\
@.B^{j-1}@>{f^{j-1}}>>B^{j}@.B^{j+1}@>>>\cdots
\end{CD}
\]
in which $A^{\bullet}$ is a bounded complex of abelian groups and each column
is a short exact sequence (in particular, the $g$'s are injective and the
$h$'s are surjective). The cohomology groups $H^{j}(A^{\bullet})$ are all
finite if and only if the numbers $z(f^{j})$ are all defined, in which case%
\[
\prod\nolimits_{j}[H^{j}(A^{\bullet})]^{(-1)^{j}}=\prod\nolimits_{j}%
z(f^{j})^{(-1)^{j}}.
\]

\end{lemma}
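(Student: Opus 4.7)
The plan is to use the diagram's commutativity to factor each differential $d^j$ through $h^j$ and $g^{j+1}$, thereby expressing $H^j(A^\bullet)$ as an extension of $\Ker(f^j)$ by $\Coker(f^{j-1})$, and then take an alternating product of orders.

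First I would read off the commutativity of the two squares adjacent to column $j$. The right-hand square has corners $A^j, C^j, A^{j+1}, C^{j+1}$; together with the surjectivity of $h^j$ and the injectivity of $g^{j+1}$, it forces
\[
d^j = g^{j+1} \circ f^j \circ h^j.
\]
Symmetrically, the left-hand square (corners $A^{j-1}, B^{j-1}, A^j, B^j$) yields $d^{j-1} = g^j \circ f^{j-1} \circ h^{j-1}$. These factorizations encode that $d^j$ vanishes on $g^j(B^j) = \Ker(h^j)$ and lands inside $g^{j+1}(C^{j+1})$, and similarly for $d^{j-1}$.

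Next I would compute the kernel and image. Since $g^{j+1}$ is injective, $\Ker(d^j) = (h^j)^{-1}(\Ker f^j)$, which fits in the short exact sequence
\[
0 \to B^j \to \Ker(d^j) \to \Ker(f^j) \to 0
\]
(identifying $B^j$ with $g^j(B^j) = \Ker(h^j)$ via the column exact sequence). On the other hand, surjectivity of $h^{j-1}$ and injectivity of $g^j$ give $\im(d^{j-1}) = g^j(\im f^{j-1})$, which sits inside $g^j(B^j) \subset \Ker(d^j)$. Dividing the first exact sequence by this subgroup yields the key short exact sequence
\[
0 \to \Coker(f^{j-1}) \to H^j(A^\bullet) \to \Ker(f^j) \to 0.
\]

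From here the conclusion is essentially automatic. Finiteness of $H^j(A^\bullet)$ for every $j$ is equivalent to finiteness of both $\Ker(f^j)$ and $\Coker(f^{j-1})$ for every $j$, which (letting $j$ range) is precisely the requirement that all $z(f^j)$ be defined. Multiplicativity of orders in the exact sequence gives $[H^j(A^\bullet)] = [\Coker(f^{j-1})] \cdot [\Ker(f^j)]$; reindexing the first factor as $k = j-1$ makes the alternating product telescope into $\prod_j z(f^j)^{(-1)^j}$, as required. The only step that needs genuine care is verifying the factorization $d^j = g^{j+1} \circ f^j \circ h^j$ directly from the two-sided commutativity of the diagram; the rest is routine.
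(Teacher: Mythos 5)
Your proof is correct and follows essentially the same route as the paper's: both derive the short exact sequence $0\to\Coker(f^{j-1})\to H^{j}(A^{\bullet})\to\Ker(f^{j})\to0$ and then take alternating products. The only cosmetic difference is that you obtain that sequence by directly computing $\Ker(d^{j})$ and $\im(d^{j-1})$ from the factorization $d^{j}=g^{j+1}\circ f^{j}\circ h^{j}$, whereas the paper packages the same computation as an application of the snake lemma.
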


\begin{proof}
Because $h^{j-1}$ is surjective, $g^{j}$ maps the image of $f^{j-1}$ into the
image of $d^{j-1}$. Because $g^{j+1}$ is injective and $h^{j}$ is surjective,
$h^{j}$ maps the kernel of $d^{j}$ onto the kernel of $f^{j}$. The snake lemma
applied to%
\[
\begin{CD}
@.\im(f^{j-1})@>{g^{j}}>>\im(d^{j-1}) @>>> 0 \\
@.@VVV@VVV@VVV@.\\
0 @>>> B^{j} @>g^{j}>> \Ker(d^{j}) @>{h^j}>> \Ker(f^{j}) @>>> 0
\end{CD}
\]
gives an exact sequence%
\[
0\rightarrow\Coker(f^{j-1})\rightarrow H^{j}(A^{\bullet})\rightarrow
\Ker(f^{j})\rightarrow0.
\]
Therefore $H^{j}(A^{\bullet})$ is finite if and only if $\Coker(f^{j-1})$ and
$\Ker(f^{j})$ are both finite, in which case%
\[
\lbrack H^{j}(A^{\bullet})]=[\Coker(f^{j-1})]\cdot\lbrack\Ker(f^{j})].
\]
On combining these statements for all $j$, we obtain the lemma.
\end{proof}

\subsection{Cohomological preliminaries}

Let $\Lambda$ be a finite ring, and let $\Lambda\Gamma$ be the group ring. For
a $\Lambda$-module $M$, we let $M_{\ast}$ denote the corresponding co-induced
module. Thus $M_{\ast}$ consists of the locally constant maps $f\colon
\Gamma\rightarrow M$ and $\tau\in\Gamma$ acts on $f$ according to the rule
$(\tau f)(\sigma)=f(\sigma\tau)$. When $M$ is a discrete $\Gamma$-module,
there is an exact sequence%
\begin{equation}
0\rightarrow M\longrightarrow M_{\ast}\overset{\alpha_{\gamma}%
}{\longrightarrow}M_{\ast}\rightarrow0, \label{eq43}%
\end{equation}
in which the first map sends $m\in M$ to the map $\sigma\mapsto\sigma m$ and
the second map sends $f\in M_{\ast}$ to $\sigma\mapsto f(\sigma\gamma)-\gamma
f(\sigma)$. Let $F$ be the functor $M\rightsquigarrow M^{\Gamma}%
\colon\Mod(\Lambda\Gamma)\rightarrow\Mod(\Lambda)$. The class of co-induced
$\Lambda\Gamma$-modules is $F$-injective, and so (\ref{eq43}) defines
isomorphisms
\[
RF(M)\simeq F(M_{\ast}\overset{\alpha_{\gamma}}{\longrightarrow}M_{\ast
})\simeq(M\overset{1-\gamma}{\longrightarrow}M)
\]
in $\mathsf{D}^{+}(\Lambda)$. For the second isomorphism, note that $M_{\ast
}^{\Gamma}$ is the set of constant functions $\Gamma\rightarrow M$, and if $f$
is the constant function with value $m$, then $(\alpha_{\gamma}f)(\sigma
)=f(\sigma\gamma)-\gamma f(\sigma)=m-\gamma m$.

Now let $\Mod(\Lambda_{\bullet}\Gamma)$ denote the category of projective
systems $(M_{m})_{m\in\mathbb{N}{}}$ with $M_{m}$ a discrete $\Gamma$-module
killed by $p^{m}$, and let $F$ be the functor $\Mod(\Lambda_{\bullet}%
\Gamma)\rightarrow\Mod(\mathbb{Z}_{p})$ sending $(M_{m})_{m}$ to $\varprojlim
M_{m}^{\Gamma}$. We say that an object $(M_{m})_{m}$ of $\Mod(\Lambda
_{\bullet}\Gamma)$ is co-induced if $M_{m}$ is co-induced for each $m$. For
every complex $X=(X_{m})_{m}$ of $\Lambda_{\bullet}\Gamma$-modules, there is
an exact sequence%
\begin{equation}
0\rightarrow X\rightarrow X_{\ast}\overset{\alpha_{\gamma}}{\longrightarrow
}X_{\ast}\rightarrow0\label{eq46}%
\end{equation}
of complexes with $X_{\ast}^{j}=(X_{m\ast}^{j})_{m}$ for all $j,m$. The class
of co-induced $\Lambda_{\bullet}\Gamma$-modules is $F$-injective, and so
(\ref{eq46}) defines isomorphisms%
\begin{equation}
RF(X)\simeq s(F(X_{\ast}\longrightarrow X_{\ast}))\simeq s(\vec{X}%
\overset{1-\gamma}{\longrightarrow}\vec{X})\label{eq47}%
\end{equation}
in $\mathsf{D}^{+}(\mathbb{Z}{}_{p})$ where $\vec{X}=(R\varprojlim)(X)$ and
$\vec{X}\overset{1-\gamma}{\longrightarrow}\vec{X}$ is a double complex with
$\vec{X}$ as both its zeroth and first column. From (\ref{eq47}), we get a
long exact sequence%
\begin{equation}
\cdots\rightarrow H^{j-1}(\vec{X})\overset{1-\gamma}{\longrightarrow}%
H^{j-1}(\vec{X})\rightarrow R^{j}F(X)\rightarrow H^{j}(\vec{X}%
)\overset{1-\gamma}{\longrightarrow}H^{j}(\vec{X})\rightarrow\cdots
.\label{eq48}%
\end{equation}

If $(M_{m})_{m}$ is a $\Lambda_{\bullet}\Gamma$-module satisfying the
Mittag-Leffler condition, then
\[
R^{j}F((M_{m})_{m})\simeq H_{\mathrm{cts}}^{j}(\Gamma,\varprojlim M_{m})
\]
(continuous cohomology). Let $\Lambda_{\bullet}=(\mathbb{Z}{}/p^{m}%
\mathbb{Z}{})_{m}$. Then%
\[
R^{1}F(\Lambda_{\bullet})\simeq H_{\mathrm{cts}}^{1}(\Gamma,\mathbb{Z}{}%
_{p})\simeq\Hom_{\mathrm{cts}}(\Gamma,\mathbb{Z}{}_{p})\text{,}%
\]
which has a canonical element $\theta$, namely, that mapping $\gamma$ to $1$.
We can regard $\theta$ as an element of%
\[
\Ext^{1}(\Lambda_{\bullet},\Lambda_{\bullet})=\Hom_{\mathsf{D}^{+}%
(\Lambda_{\bullet}\Gamma)}(\Lambda_{\bullet},\Lambda_{\bullet}[1]).
\]
Thus, for $X$ in $\mathsf{D}^{+}(\Lambda_{\bullet}\Gamma)$, we obtain maps%
\begin{align*}
\theta\colon X  &  \rightarrow X[1]\\
R\theta\colon RF(X)  &  \rightarrow RF(X)[1].
\end{align*}
The second map is described explicitly by the following map of double
complexes:%
\[
\begin{tikzpicture}[baseline=(current bounding box.center), text height=1.5ex, text depth=0.25ex]
\node (a) at (0,0) {$RF(X)$};
\node (b) at (2,0) {};
\node (c) at (4,0) {$\vec{X}$};
\node (d) at (6,0) {$\vec{X}$};
\node (e) [below=of a] {$RF(X)[1]$};
\node (f)[below=of b] {$\vec{X}$};
\node (g)[below=of c] {$\vec{X}$};
\node (h)[below=of d] {};
\node at (2,-2.05) {$\scriptstyle{-1}$};
\node at (4,-2.05) {$\scriptstyle{0}$};
\node at (6,-2.05) {$\scriptstyle{1}$};
\draw[->,font=\scriptsize,>=angle 90]
(a) edge node[right]{$R\theta$} (e)
(c) edge node[right]{$\gamma$} (g)
(c) edge node[above]{$1-\gamma$} (d)
(f) edge node[above]{$1-\gamma$} (g);
\end{tikzpicture}
\]
For all $j$, the following diagram commutes%
\begin{equation}
\begin{tikzcd} R^{j}F(X)\arrow{d}\arrow{r}{d^{j}}& R^{j+1}F(X)\\ H^{j}(\vec{X})\arrow{r}{\id} & H^{j}(\vec{X})\arrow{u} \end{tikzcd} \label{eq50}%
\end{equation}
where $d^{j}=H^{j}(R\theta)$ and the vertical maps are those in (\ref{eq48}).
The sequence%
\begin{equation}
\cdots\longrightarrow R^{j-1}F(X)\overset{d^{j-1}}{\longrightarrow}%
R^{j}F(X)\overset{d^{j}}{\longrightarrow}R^{j+1}F(X)\longrightarrow
\cdots\label{eq51}%
\end{equation}
is a complex because $R\theta\circ R\theta=0$.

\subsection{Review of $F$-isocrystals}

Let $V$ be an $F$-isocrystal over $k$. The $\bar{K}$-module $\bar
{V}\overset{\textup{{\tiny def}}}{=}\bar{K}\otimes_{K}V$ becomes an
$F$-isocrystal over $\bar{k}$ with $\bar{F}$ acting as $\sigma\otimes F$.

\begin{plain}
\label{c2} Let $\lambda$ be a nonnegative rational number, and write
$\lambda=s/r$ with $r,s\in\mathbb{N}{}$, $r>0$, $(r,s)=1$. Define $E^{\lambda
}$ to be the $F$--isocrystal $K_{\sigma}[F]/\left(  K_{\sigma}[F](F^{r}%
-p^{s})\right)  $.

When $k$ is algebraically closed, every $F$-isocrystal is semisimple, and the
simple $F$-isocrystals are exactly the $E^{\lambda}$ with $\lambda
\in\mathbb{Q}_{\geq0}$. Therefore an $F$-isocrystal has a unique (slope)
decomposition%
\begin{equation}
V=\bigoplus\nolimits_{\lambda\geq0}V_{\lambda} \label{eq63}%
\end{equation}
with $V_{\lambda}$ a sum of copies of $E^{\lambda}$. See \cite{demazure1972}, IV.

When $k$ is merely perfect, the decomposition (\ref{eq63}) of $\bar{V}$ is
stable under $\Gal(\bar{k}/k)$, and so arises from a (slope) decomposition of
$V$. In other words, $V=\bigoplus\nolimits_{\lambda}V_{\lambda}$ with
$\overline{V_{\lambda}}=\bar{V}_{\lambda}$. If $\lambda=r/s$ with $r,s$ as
above, then $V_{\lambda}$ is the largest $K$-submodule of $V$ such that
$F^{r}V_{\lambda}=p^{s}V_{\lambda}$. The $F$-isocrystal $V_{\lambda}$ is
called the part of $V$ with slope $\lambda$, and $\{\lambda\mid V_{\lambda
}\neq0\}$ is the set of slopes of $V$.
\end{plain}

\begin{plain}
\label{e3}Let $V$ be an $F$-isocrystal over $k$. The characteristic
polynomial
\[
P_{V,\alpha}(t)\overset{\textup{{\tiny def}}}{=}\det(1-\alpha t|V)
\]
of an endomorphism $\alpha$ of $V$ lies in $\mathbb{Q}_{p}[t]$. Let
$k=\mathbb{F}{}_{q}$ with $q=p^{a}$, so that $F^{a}$ is an endomorphism of
$(V,F)$, and let
\[
P_{V,F^{a}}(t)=\prod\nolimits_{i\in I}(1-a_{i}t),\quad a_{i}\in\mathbb{\bar
{Q}}_{p}\text{.}%
\]
According to a theorem of Manin, $\left(  \ord_{q}(a_{i})\right)  _{i\in I}$
is the family of slopes of $V$. Here $\ord_{q}$ is the $p$-adic valuation on
$\mathbb{\bar{Q}}_{p}$ normalized so that $\ord_{q}(q)=1$. See
\cite{demazure1972}, pp.89-90.
\end{plain}

\begin{plain}
\label{e4}Let $(V,F)$ be an $F$-isocrystal over $k=\mathbb{F}{}_{p^{a}}$, and
let $\lambda\in\mathbb{N}{}$. Let%
\[
V_{(\lambda)}=\{v\in\bar{V}\mid\bar{F}v=p^{\lambda}v\}\quad\text{(}%
\mathbb{Q}{}_{p}\text{-subspace of }\bar{V}\text{).}%
\]
Then $V_{(\lambda)}$ is a $\mathbb{Q}{}_{p}$-structure on $V_{\lambda}$. In
other words, $V_{\lambda}$ has a basis of elements $e$ with the property that
$\bar{F}e=p^{\lambda}e$, and hence
\[
\left(  \gamma\otimes F^{a}\right)  e=\bar{F}^{a}e=q^{\lambda}e.
\]
Therefore, as $c$ runs over the eigenvalues of $F^{a}$ on $V$ with
$\ord_{q}(c)=\lambda$, the quotient $q^{\lambda}/c$ runs over the eigenvalues
of $\gamma$ on $V_{(\lambda)}$; moreover, $c$ is a multiple root of the
minimum polynomial of $F^{a}$ on $V_{\lambda}$ if and only if $q^{\lambda}/c$
is a multiple root of the minimum polyomial of $\gamma$ on $V_{(\lambda)}$.
See \cite{milne1986}, 5.3.
\end{plain}

\begin{plain}
\label{e5}Let $(V,F)$ be an $F$-isocrystal over $k=\mathbb{F}{}_{p^{a}}$. If
$F^{a}$ is a semisimple endomorphism of $V$ (as a $K$-vector space), then
$\End(V,F)$ is semisimple, because it is a $\mathbb{Q}{}_{p}$-form of the
centralizer of $F^{a}$ in $\End(V)$; it follows that $(V,F)$ is semisimple.
Conversely, if $(V,F)$ is semisimple, then $F^{a}$ is semisimple, because it
lies in the centre of the semisimple algebra $\End(V,F)$. Let $V$ and
$V^{\prime}$ be nonzero $F$-isocrystals; then $V\otimes V^{\prime}$ is
semisimple if and only if both $V$ and $V^{\prime}$ are semisimple.
\end{plain}

\subsection{A preliminary calculation}

In this subsection, $k$ is the finite field $\mathbb{F}_{q}$ with $q=p^{a}$,
and $\Gamma=\Gal(\bar{k}/k)$. We take the Frobenius element $x\mapsto x^{q}$
to be the generator $\gamma$ of $\Gamma$.

Recall that for $P$ in $\mathsf{D}_{c}^{b}(R)$, $H^{j}(sP)_{K}$ is an $F$-isocrystal.

\begin{proposition}
\label{b9c}Let $M,N\in\mathsf{D}_{c}^{b}(R)$, let $P=R\underline{\Hom}(M,N)$,
and let $r\in\mathbb{Z}{}$. For each $j$, let
\[
f_{j}\colon\Ext^{j}(\bar{M},\bar{N}(r))^{\Gamma}\rightarrow\Ext^{j}(\bar
{M},\bar{N}(r))_{\Gamma}%
\]
be the map induced by the identity map. Then $z(f_{j})$ is defined if and only
if $q^{r}$ is not a multiple root of the minimum polynomial of $F^{a}$ on
$H^{j}(sP)_{K}$, in which case%
\[
z(f_{j})=\left\vert \prod_{a_{j,l}\neq q^{r}}\left(  1-\frac{a_{j,l}}{q^{r}%
}\right)  \right\vert _{p}~~\left\vert \prod_{\mathrm{ord}_{q}(a_{j,l}%
)<r}\frac{q^{r}}{a_{j,l}}\right\vert _{p}q^{T^{r-1,j-r}(P)}%
\]
where $(a_{j,l})_{l}$ is the family of eigenvalues of $F^{a}$ acting on
$H^{j}(sP)_{K}$.
\end{proposition}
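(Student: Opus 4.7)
The plan is to decompose the $\Gamma$-module $\Ext^j(\bar M,\bar N(r))$ into a connected unipotent piece and a finitely generated $\mathbb{Z}_p$-piece, compute $z$ on each piece separately, and then translate the resulting $\gamma$-eigenvalues back to the $F^a$-eigenvalues $a_{j,l}$ using \ref{e4}.

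First, I would apply Proposition \ref{b6b} to $P = R\underline{\Hom}(M,N)$ with Tate twist $r$, then base-change to $\bar k$. Since $R\underline{\Hom}$ commutes with base change and $R\Gamma(\bar k_{\mathrm{et}},-)$ reduces to taking $\bar k$-points on etale sheaves over an algebraically closed field, the resulting extension in $\mathcal{G}{}_{\bullet}$ yields a short exact sequence of $\Gamma$-modules
\[
0 \to U^j(\bar k) \to \Ext^j(\bar M,\bar N(r)) \to D^j(\bar k) \to 0,
\]
with $U^j$ a connected unipotent perfect algebraic group of dimension $T^{r-1,j-r}(P)$ and $D^j(\bar k)$ finitely generated over $\mathbb{Z}_p$ (Corollary \ref{b6f}).

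Next I would prove $z(f_j) = q^{T^{r-1,j-r}(P)} \cdot z(f_j^D)$, with $f_j^D \colon D^j(\bar k)^\Gamma \to D^j(\bar k)_\Gamma$ the analogous map on the etale quotient. By Lang's theorem, $1-\gamma$ is surjective on $U^j(\bar k)$ with kernel $U^j(\mathbb{F}_q)$ of order $q^{T^{r-1,j-r}(P)}$, so the Galois cohomology long exact sequence collapses to $0 \to U^j(\mathbb{F}_q) \to \Ext^j(\bar M,\bar N(r))^\Gamma \to D^j(\bar k)^\Gamma \to 0$ and $\Ext^j(\bar M,\bar N(r))_\Gamma \simeq D^j(\bar k)_\Gamma$. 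A short diagram chase then shows $f_j$ factors through $f_j^D$ and kills $U^j(\mathbb{F}_q)$, giving the multiplicative relation as well as the equivalence of defined-ness. Lemma \ref{a1} applied to $D^j(\bar k)$ yields $z(f_j^D) = |\prod_{c\neq 1}(1-c)|_p$ over the $\gamma$-eigenvalues $c$ on $D^j(\bar k) \otimes \mathbb{Q}_p$, with defined-ness criterion that $1$ is not a multiple root of the $\gamma$-minimum polynomial.

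Third, I would identify the $\gamma$-eigenvalues on $D^j(\bar k)\otimes \mathbb{Q}_p$ with $\{q^r/a_{j,l}\}$. Corollary \ref{b6f} writes $D^j(\bar k)\otimes\mathbb{Q}_p$ as the $(F\otimes\sigma)$-fixed part of an $E_2$-term of the slope spectral sequence of $P$; combining the rational identification \ref{a18} of $\heartsuit$ with that $E_2$-term and the slope-spectral-sequence isomorphism (\ref{eq7a}) shows this $E_2$-term matches the slope-$r$ part of $H^j(sP)_K$ after the $F\leftrightarrow p^rF$ renormalization. Applying \ref{e4} with $\lambda=0$ on the source then yields the eigenvalue correspondence $c \leftrightarrow q^r/a_{j,l}$ for those $a_{j,l}$ with $\ord_q(a_{j,l}) = r$, and converts the multiple-root condition for $\gamma$ at $1$ into the condition that $q^r$ not be a multiple root of the minimum polynomial of $F^a$ on $H^j(sP)_K$ as a whole (since $q^r$ can only be an eigenvalue on the slope-$r$ part).

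Finally, to reconcile with the exact form of the statement, observe that $|1-a/q^r|_p = 1$ when $\ord_q(a) > r$, that $|1-a/q^r|_p \cdot |q^r/a|_p = 1$ when $\ord_q(a) < r$, and that $|1-a/q^r|_p = |1-q^r/a|_p$ when $\ord_q(a) = r$. Combining these identities with the previous two steps recovers the displayed product. The hardest step is the third one: keeping the $(F\otimes\sigma)$-invariants, the $p^r$-twisted slope spectral sequence, and \ref{e4} consistent so as to pin down the clean correspondence between $\gamma$-eigenvalues on $D^j(\bar k)\otimes\mathbb{Q}_p$ and those $F^a$-eigenvalues on $H^j(sP)_K$ with $p$-adic valuation exactly $r$.
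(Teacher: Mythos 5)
Your proposal is correct and follows essentially the same route as the paper's proof of Proposition~\ref{b9c}: decompose $\mathcal{E}xt^j(M,N(r))$ into its connected unipotent part $U^j$ and pro-\'etale quotient $D^j$ via \ref{b6b}/\ref{b6f}, use surjectivity of $1-\gamma$ on $U^j(\bar k)$ (your ``Lang's theorem'' is the paper's ``\'etale and connected'') to split $z(f_j)=q^{T^{r-1,j-r}(P)}\cdot z(f_j^D)$, then apply Lemma~\ref{a1} and \ref{e4} on $D^j(\bar k)$ and finish with the same $p$-adic norm identities. The only presentational difference is that you unpack the identification $D^j(\bar k)\otimes\mathbb{Q}_p\simeq H^j(sP_K)_{(r)}$ through \ref{a18} and (\ref{eq7a}) rather than citing (\ref{eq61}) directly, but the content is identical.
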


\begin{proof}
(Following the proof of \cite{milne1986}, 6.2.) Let $G^{j}$ denote the perfect
pro-group scheme $\mathcal{E}{}xt^{j}(M,N(r))\overset{\textup{{\tiny def}}%
}{=}H^{j}(\mathcal{P}(r){}_{\bullet}^{F})$. There is an exact sequence%
\[
0\rightarrow U^{j}\rightarrow G^{j}\rightarrow D^{j}\rightarrow0
\]
in which $U^{j}$ is a connected unipotent perfect algebraic group of dimension
$T^{r-1,j-r}(P)$ and $D^{j}$ is a pro-\'{e}tale group such that $D^{j}(\bar
{k})$ is a finitely generated $\mathbb{Z}{}_{p}$-module and%
\begin{equation}
D^{j}(\bar{k})\otimes_{\mathbb{Z}{}_{p}}\mathbb{Q}{}_{p}\simeq H^{j}%
(sP(r)_{K})_{(0)}\simeq H^{j}(sP_{K})_{(r)} \label{eq61}%
\end{equation}
(see \ref{b6b}, \ref{b6f}). The map $1-\gamma\colon U^{j}(\bar{k})\rightarrow
U^{j}(\bar{k})$ is surjective because it is \'{e}tale and $U^{j}$ is
connected. On applying the snake lemma to%
\[
\begin{CD}
0 @>>> U^{j}(\bar{k}) @>>> G^{j}(\bar{k}) @>>> D^{j}(\bar{k}) @>>> 0\\
@.@VV{1-\gamma}V @VV{1-\gamma}V @VV{1-\gamma}V\\
0@>>>U^{j}(\bar{k})@>>> G^{j}(\bar{k}) @>>> D^{j}(\bar{k}) @>>> 0,
\end{CD}
\]
and using that the first vertical arrow is surjective, we obtain the upper and
lower rows of the following exact commutative diagram%
\begin{equation}
\begin{CD} 0 @>>> U^j(\bar{k})^{\Gamma} @>>> G^j (\bar{k})^{\Gamma} @>>> D^j (\bar{k})^{\Gamma} @>>> 0\\ @. @VV{f^{\prime}_j}V @VV{f_j}V @VV{f^{\prime\prime}_j}V @.\\ 0 @>>> 0 @>>> G^j (\bar{k})_{\Gamma} @>>> D^j (\bar{k})_{\Gamma} @>>> 0. \end{CD} \label{eq2a}%
\end{equation}
Because $U^{j}$ has a composition series whose quotients are isomorphic to
$\mathbb{G}{}_{a}^{\mathrm{pf}}$,%
\[
\lbrack U^{j}(k)]=q^{\dim(U^{j})}=q^{T^{r-1,j-r}}\text{.}%
\]
On the other hand, it follows from (\ref{e4}) that the eigenvalues of $\gamma$
acting on $D^{j}(\bar{k})_{\mathbb{Q}{}_{p}}$ are the quotients $q^{r}%
/a_{j,l}$ with $\ord_{q}(a_{j,l})=r$. Therefore, (\ref{a1}) and (\ref{e4})
show that $z(f_{j}^{\prime\prime})$ is defined if and only if the minimum
polynomial of $F^{a}$ on $H^{j}(sP)_{K}$ does not have $q^{r}$ as a multiple
root, in which case%
\[
z(f_{j}^{\prime\prime})=\left\vert \prod_{l}\left(  1-\frac{q^{r}}{a_{j,l}%
}\right)  \right\vert _{p}%
\]
where the product is over the $a_{j,l}$ such that $\ord_{q}(a_{j,l})=r$ but
$a_{j,l}\neq q^{r}$. Note that%
\[
\left\vert 1-\frac{a_{j,l}}{q^{r}}\right\vert _{p}=\left\vert 1-\frac{q^{r}%
}{a_{j,l}}\right\vert _{p}\quad\text{if }\ord_{q}(a_{j,l})=r,
\]
and%
\[
\left\vert 1-\frac{a_{j,l}}{q^{r}}\right\vert _{p}=\left\{
\begin{array}
[c]{ll}%
|a_{j,l}/q^{r}|_{p} & \text{if }\ord_{q}(a_{j,l})<r\\
1 & \text{if }\ord_{q}(a_{j,l})>r.
\end{array}
\right.
\]
Therefore%
\[
z(f_{j}^{\prime\prime})=\left\vert \prod_{a_{j,l}\neq q^{r}}\left(
1-\frac{a_{j,l}}{q^{r}}\right)  \right\vert _{p}~~\left\vert \prod
_{\mathrm{ord}_{q}(a_{j,l})<r}\frac{q^{r}}{a_{j,l}}\right\vert _{p}%
\]
where both products are over all $a_{j,l}$ satisfying the conditions. The
snake lemma applied to (\ref{eq2a}) shows $z(f_{j})$ is defined if and only if
both $z(f_{j}^{\prime})$ and $z(f_{j}^{\prime\prime})$ are defined, in which
case $z(f_{j})=z(f_{j}^{\prime})\cdot z(f_{j}^{\prime\prime})$. The
proposition now follows.
\end{proof}

\subsection{Definition of the complex $E(M,N(r))$}

Recall (\ref{b6a}) that the bifunctor%
\[
R\Hom\colon\mathsf{D}(R)^{\mathrm{opp}}\times\mathsf{D}^{+}(R)\rightarrow
\mathsf{D}(\mathbb{Z}{}_{p})
\]
factors canonically through%
\[
R\Gamma(S_{\mathrm{et}},-)\colon\mathsf{D}^{+}(\mathcal{S}{}_{\bullet
})\rightarrow\mathsf{D}(\mathbb{Z}{}_{p})
\]
where $\Gamma(S_{\mathrm{et}},-)$ is the functor $(P_{m})_{m}\rightsquigarrow
\varprojlim\Gamma(S_{\mathrm{et}},P_{m})$. Since $R\Gamma(S_{\mathrm{et}},-)$
obviously factors through%
\[
RF\colon\mathsf{D}^{+}(\Lambda_{\bullet}\Gamma)\rightarrow\mathsf{D}%
(\mathbb{Z}{}_{p}),\quad F=\left(  (M_{m})_{m}\rightsquigarrow\varprojlim
M_{m}^{\Gamma}\right)  ,\quad\Gamma=\Gal(\bar{k}/k),
\]
so also does $R\Hom$. Therefore, for $M,N\in\mathsf{D}_{c}^{b}(R)$, there
exists a well-defined object $X$ in $\mathsf{D}^{+}(\Lambda_{\bullet}\Gamma)$
such that $RF(X)=R\Hom(M,N(r))$. For an algebraically closed base field $k$,
$RF(X)=\vec{X}$, and so, for a general $k$, $\vec{X}=R\Hom(\bar{M},\bar
{N}(r))$.

Now let $k$ be $\mathbb{F}{}_{q}$ with $q=p^{a}$. With $X$ as in the last
paragraph, the sequence (\ref{eq48}) gives us short exact sequences%
\begin{equation}
0\rightarrow\Ext^{j-1}(\bar{M},\bar{N}(r))_{\Gamma}\rightarrow\Ext^{j}%
(M,N(r))\rightarrow\Ext^{j}(\bar{M},\bar{N}(r))^{\Gamma}\rightarrow0\text{.}
\label{eq52}%
\end{equation}
Moreover, (\ref{eq51}) becomes a complex%
\[
E(M,N(r))\colon\quad\cdots\rightarrow\Ext^{j-1}(M,N(r))\rightarrow
\Ext^{j}(M,N(r))\rightarrow\Ext^{j+1}(M,N(r))\rightarrow\cdots
\]
This is the unique complex for which the following diagram commutes,%
\begin{equation}
\minCDarrowwidth5pt\begin{CD} @.@.\Ext^j(\bar{M},\bar{N}(r))^{\Gamma}@>f^j>> \Ext^{j}(\bar{M},\bar{N}(r))_{\Gamma}@.\\ @.@.@AAA@VVV\\ \cdots @>>> \Ext^{j-1}(M,N(r)) @>{d^{j-1}}>> \Ext^{j}(M,N(r)) @>{d^{j}}>> \Ext^{j+1}(M,N(r)) @>>> \cdots\\ @.@VVV@AAA@.\\ @.\Ext^{j-1}(\bar{M},\bar{N}(r))^{\Gamma}@>f^{j-1}>>\Ext^{j-1}(\bar{M},\bar{N}(r))_{\Gamma}@.@. \end{CD} \label{e30}%
\end{equation}
(the vertical maps are those in (\ref{eq52}) and the maps $f^{j}$ are induced
by the identity map).

Let $P\in\mathsf{D}_{c}^{b}(R)$. The zeta function $Z(P,t)$ of $P$ is the
alternating product of the characteristic polynomials of $F^{a}$ acting on the
isocrystals $H^{j}(sP)_{K}$:
\[
Z(P,t)=\prod\nolimits_{j}\det(1-F^{a}t\mid H^{j}(sP)_{K})^{(-1)^{j+1}}.\quad
\]

\subsection{Proof of Theorem \ref{a0}}

We first note that the condition on the minimum polynomial of $F^{a}$ implies
that the minimum polynomial of $\gamma$ on $H^{j}(sP_{\bar{K}})_{(r)}$ does
not have $1$ as a multiple root (see \ref{e4}). Let%
\[
P_{j}(t)=\det(1-F^{a}t\mid H^{j}(sP)_{K})=\prod\nolimits_{l}(1-a_{j,l}t).
\]

(a) We have $\Ext^{j}(\bar{M},\bar{N}(r))=\mathcal{E}{}xt^{j}(M,N(r))(\bar
{k})$ where $\mathcal{E}{}xt^{j}(M,N(r))$ is a pro-algebraic group such that
the identity component of $\mathcal{E}{}xt^{j}(M,N(r))$ is algebraic and the
quotient of $\mathcal{E}{}xt^{j}(M,N(r))$ by its identity component is a
pro-\'{e}tale group $(D_{m}^{j})_{m}$ such that $\varprojlim_{m}D_{m}^{j}%
(\bar{k})$ is a finitely generated $\mathbb{Z}{}_{p}$-module (see \ref{b6b},
\ref{b6f}). Hence the $\mathbb{Z}{}_{p}$-modules $\Ext^{j}(\bar{M},\bar
{N}(r))^{\Gamma}$ and $\Ext^{j}(\bar{M},\bar{N}(r))_{\Gamma}$ are finitely
generated. Now%
\[
\rank(\Ext^{j}(M,N(r)))=\rank(\Ext^{j-1}(\bar{M},\bar{N}(r))_{\Gamma
})+\rank(\Ext^{j}(\bar{M},\bar{N}(r))^{\Gamma}).
\]
The hypothesis on the action of the Frobenius element implies that%
\[
\Ext^{j}(\bar{M},\bar{N}(r))^{\Gamma}\otimes\mathbb{Q}{}\simeq\Ext^{j}(\bar
{M},\bar{N}(r))_{\Gamma}\otimes\mathbb{Q}{}%
\]
for all $j$, and so%
\[
\rank(\Ext^{j}(M,N(r)))=\rank(\Ext^{j-1}(\bar{M},\bar{N}(r))^{\Gamma
})+\rank(\Ext^{j}(\bar{M},\bar{N}(r))^{\Gamma}).
\]
Therefore,%
\[
\sum\nolimits_{j}(-1)^{j}\rank(\Ext^{j}(M,N(r)))=0.
\]
(b) Let $\rho_{j}$ be the multiplicity of $q^{r}$ as an inverse root of
$P_{j}$. Then%
\[
\rho_{j}=\rank\Ext^{j}(\bar{M},\bar{N}(r))^{\Gamma}=\rank\Ext^{j}(\bar{M}%
,\bar{N}(r))_{\Gamma}\text{,}%
\]
and so%
\begin{align*}
\sum\nolimits_{j}(-1)^{j+1}\cdot j\cdot\rank(\Ext^{j}(M,N(r)))  &
=\sum\nolimits_{j}(-1)^{j+1}\cdot j\cdot(\rho_{j-1}+\rho_{j})\\
&  =\sum\nolimits_{j}(-1)^{j}\rho_{j}\\
&  =\rho\text{.}%
\end{align*}

(c) From Lemma \ref{b10} applied to the diagram (\ref{e30}), we find that%
\[
\chi(M,N(r))=\prod\nolimits_{j}z(f^{j})^{(-1)^{j}}\text{.}%
\]
According to Proposition \ref{b9c},%
\[
z(f^{j})=\left\vert \prod_{a_{j,l}\neq q^{r}}\left(  1-\frac{a_{j,l}}{q^{r}%
}\right)  \right\vert _{p}~~\left\vert \prod_{\mathrm{ord}_{q}(a_{j,l}%
)<r}\frac{q^{r}}{a_{j,l}}\right\vert _{p}q^{T^{r-1,j-r}(P).}.
\]
where $(a_{j,l})_{l}$ is the family of eigenvalues of $F^{a}$ acting on
$H^{j}(sP(r))_{\mathbb{Q}{}}$. Note that%
\[
\prod_{a_{j,l}\neq q^{r}}\left(  1-\frac{a_{j,l}}{q^{r}}\right)
=\lim_{t\rightarrow q^{-r}}\frac{P_{j}(t)}{(1-q^{r}t)^{\rho_{j}}}\text{.}%
\]
According to (\ref{e3}),%
\[
\left\vert \prod_{\mathrm{ord}_{q}(a_{j,l})<r}\frac{q^{r}}{a_{j,l}}\right\vert
_{p}^{-1}=\sum_{l\,\,\,(\lambda_{j,l}<r)}r-\lambda_{j,l}%
\]
where $(\lambda_{j,l})_{l}$ is the family of slopes $H^{j}(sP(r))_{\mathbb{Q}%
{}}$. Therefore%
\[
\chi(M,N(r))=\left\vert \lim_{t\rightarrow q^{-r}}Z(M,N,t)\cdot(1-q^{r}%
t)^{\rho}\right\vert _{p}^{-1}q^{-e_{r}(P)}.
\]
Theorem \ref{b5b} completes the proof.

\section{Applications to algebraic varieties}

Throughout, $S=\Spec(k)$ where $k$ is perfect field of characteristic $p>0$.

Recall that the zeta function of an algebraic variety $X$ over a finite field
$\mathbb{F}{}_{q}$ is defined to be the formal power series $Z(X,t)\in
\mathbb{Q}{}[[t]]$ such that%
\begin{equation}
\log(Z(X,t))=\sum_{n>0}\frac{N_{n}t_{n}}{n},\quad N_{n}=\#(X(\mathbb{F}%
{}_{q^{n}})), \label{eq3}%
\end{equation}
and that Dwork (1960)\nocite{dwork1960} proved that $Z(X,t)\in\mathbb{Q}{}(t)$.

\subsection{Smooth complete varieties}

Let $X$ be a smooth complete variety over a perfect field $k$, and let%
\[
M(X)=R\Gamma(X,W\Omega_{X}^{\bullet})\in\mathsf{D}_{c}^{b}(R)
\]
(see \ref{b1}). For all $j\geq0$,%
\begin{equation}
H^{j}(s\left(  M(X)\right)  )\simeq H_{\mathrm{crys}}^{j}(X/W) \label{eq65}%
\end{equation}
(isomorphism of $F$-isocrystals; see \ref{b1}), and so%
\[
Z(M(X),t)=\prod\nolimits_{j}\det(1-F^{a}t\mid H_{\mathrm{crys}}^{j}%
(X/W)_{\mathbb{Q}{}}^{(-1)^{j+1}}.
\]
That this equals $Z(X,t)$ is proved in \cite{katzM1974} for $X$ projective,
and the complete case can be deduced from the projective case by using de
Jong's theory of alterations (\cite{suh2012}). Moreover, $H_{\mathrm{crys}%
}^{j}(X/W)_{\mathbb{Q}{}}$ can be replaced by $H_{\mathrm{rig}}^{j}(X)$ (see
\ref{r2} below). Finally, $H_{\mathrm{abs}}^{j}(X,\mathbb{Z}{}_{p}(r))$ is the
group $H^{j}(X,\mathbb{Z}_{p}(r))$ defined in (\ref{n7}) (\cite{milneR2005}),
and%
\[
h^{i,j}(M(X))=h^{i,j}(X)\overset{\textup{{\tiny def}}}{=}\dim H^{j}%
(X,\Omega_{X}^{i}),
\]
because $R_{1}\otimes_{R}^{L}M(X)\simeq R\Gamma(X,\Omega_{X}^{\bullet})$ (see
\ref{b1}). Therefore, when $X$ is projective, Theorem \ref{b04} becomes the
$p$-part of Theorem 0.1 of \cite{milne1986}.

\subsection{Rigid cohomology}

Before considering more general algebraic varieties, we briefly review the
theory of rigid cohomology. This was introduced in the 1980s by Pierre
Berthelot as a common generalization of crystalline and Washnitzer-Monsky
cohomology. The book \cite{lestum2007} is a good reference for the
foundations. We write $H_{\mathrm{rig}}^{i}(X)$ (resp. $H_{\mathrm{rig},c}%
^{i}(X)$) for the rigid cohomology (resp. rigid cohomology with compact
support) of a variety $X$ over a perfect field $k$.

\begin{plain}
\label{r1}Both $H_{\mathrm{rig}}^{i}(X)$ and $H_{\mathrm{rig},c}^{i}(X)$ are
$F$-isocrystals over $k$; in particular, they are finite-dimensional vector
spaces over $K$. Cohomology with compact support is contravariant for proper
maps and covariant for open immersions; ordinary cohomology is contravariant
for all regular maps. The K\"{u}nneth theorem is true for both cohomology
theories. (\cite{berthelot1997a}, \cite{berthelot1997b}, \cite{gk2002}).
\end{plain}

\begin{plain}
\label{r2}When $X$ is smooth complete variety,%
\[
H_{\mathrm{rig}}^{i}(X)\simeq H_{\mathrm{crys}}^{i}(X)_{\mathbb{Q}{}}%
\]
(canonical isomorphism of $F$-isocrystals). (\cite{berthelot1986}).
\end{plain}

\begin{plain}
\label{r3}Let $U$ be an open subvariety of $X$ with closed complement $Z$;
then there is a long exact sequence%
\[
\cdots\rightarrow H_{\mathrm{rig},c}^{i}(U)\rightarrow H_{\mathrm{rig},c}%
^{i}(X)\rightarrow H_{\mathrm{rig},c}^{i}(Z)\rightarrow\cdots
\]
(\cite{berthelot1986}, 3.1).
\end{plain}

\begin{plain}
\label{r4}Rigid cohomology is a Bloch-Ogus theory. In particular, there is a
theory of rigid homology and cycle class maps. (\cite{petr2003}.)
\end{plain}

\begin{plain}
\label{r5}Rigid cohomology is a mixed-Weil cohomology theory and hence factors
through the triangulated category of complexes of mixed motives.
(\cite{cd2012, cd2013}).
\end{plain}

\begin{plain}
\label{r6}Rigid cohomology satisfies proper cohomological descent
(\cite{tsuz2003}).
\end{plain}

\begin{plain}
\label{r7}Rigid cohomomology (with compact support) can be described in terms
of the logarithmic de Rham-Witt cohomology of smooth simplicial schemes
(Lorenzon, Mokrane, Tsuzuki, Shiho, Nakkajima). We explain this below.
\end{plain}

\begin{plain}
\label{r8}When $k$ is finite, say, $k=\mathbb{F}{}_{p^{a}}$,%
\[
Z(X,t)=\det(1-F^{a}t\mid H_{\mathrm{rig},c}^{j}(X))^{(-1)^{j+1}}%
\]
(\cite{etesse1993}).
\end{plain}

\begin{plain}
\label{r9}When $k$ is finite, the $F$-isocrystals $H_{\mathrm{rig}}^{i}(X)$
and $H_{\mathrm{rig},c}^{i}(X)$ are mixed; in particular, the eigenvalues of
$\Phi=F^{a}$ are Weil numbers.
\end{plain}

The functors $X\rightsquigarrow H_{\mathrm{rig}}^{i}(X)$ and
$X\rightsquigarrow$ $H_{\mathrm{rig},c}^{i}(X)$ arise from functors to
$\mathsf{D}_{\text{\textrm{iso}}}^{b}(K_{\sigma}[F])$, which we denote
$h_{\mathrm{rig}}(X)$ and $h_{\mathrm{rig},c}(X)$ respectively.

\subsection{Varieties with log structure}

Endow $S$ with a fine log structure, and let $X$ be a complete log-smooth log
variety of Cartier type over $S$ (\cite{kato1989}). In this situation,
Lorenzon\nocite{lorenzon2002} (2002, Theorem 3.1) defines a complex
$M(X)\overset{\textup{{\tiny def}}}{=}R\Gamma(X,W\Omega_{X}^{\bullet})$ of
graded $R$-modules, and proves that it lies in $D_{c}^{b}(R)$. Therefore,
Theorem \ref{b04} applies to $X$.

\subsection{Smooth varieties}

Let $V=X\smallsetminus E$ be the complement of a divisor with normal crossings
$E$ in a smooth complete variety $X$ of dimension $n$, and let $m_{X}$ be the
canonical log structure on $X$ defined by $E$,%
\[
m_{X}=\{f\in\mathcal{O}{}_{X}\mid f\text{ is invertible outside }E\}
\]
(\cite{kato1989}, 1.5). Then $(X,m_{X})$ is log-smooth (ibid. \S 3).

Define $M(V)\in\mathsf{D}_{c}^{b}(R)$ to be the complex of graded $R$-modules
attached to $(X,m_{X})$ as above,%
\[
M(V)=R\Gamma((X,m_{X}),W\Omega_{X}^{\bullet})=R\Gamma(X,W\Omega_{X}^{\bullet
}(\log E)).
\]
We caution that this definition of $M(V)$ uses the presentation of $V$ as
$X\smallsetminus E$ --- it is not known at present that $M(V)$ depends only on
$V$. However%
\[
H^{i}(s(M(V))_{\mathbb{Q}{}{}}\simeq H_{\mathrm{rig}}^{i}(V)
\]
(\cite{nakk2012}, 1.0.18, p.13), and so $s(M(V))_{\mathbb{Q}{}}$ is
independent of the compactification $X$ of $V$.

We define $M_{c}(V)$ to be the Tate twist of the dual of $MV)$:%
\[
M_{c}(V)=D(M(V))(-n)\text{.}%
\]
(see \ref{b6c}). From Berthelot's duality of rigid cohomology
(\cite{berthelot1997a}; \cite{nakk2008}, 3.6.0.1), we have the following
isomorphism of $F$-isocrystals%
\[
H_{\mathrm{rig},c}^{j}(V)\simeq\Hom_{K}(H_{\mathrm{rig}}^{2n-j}(V),K(-n)).
\]
It follows that
\begin{equation}
H^{j}(s(M_{c}(V))_{\mathbb{Q}{}}\simeq H_{\mathrm{rig},c}^{j}(V). \label{eq66}%
\end{equation}
We define%
\[
H_{c}^{j}(V,\mathbb{Z}{}_{p}(r))=\Hom(W,M_{c}(V)(r)[j])\text{.}%
\]

Now take $k=\mathbb{F}{}_{p^{a}}$. It follows from (\ref{r8}) and (\ref{eq66})
that%
\[
Z(V,t)=Z(M_{c}(V),t)\text{.}%
\]
Moreover,
\[
R_{1}\otimes_{R}^{L}M(V)\simeq R\Gamma(X,\Omega_{X}^{\bullet}(\log E)).
\]
(\cite{lorenzon2002}, 2.17, or \cite{nakk2008}, p.184). Therefore, in this
case, Theorem \ref{b04} becomes the following statement.

\begin{theorem}
\label{b05}Assume that $q^{r}$ is not a multiple root of the minimum
polynomial of $F^{a}$ acting on $H_{\mathrm{rig}}^{j}(V)$ for any $j$.

\begin{enumerate}
\item The groups $H_{c}^{j}(V,\mathbb{Z}_{p}(r))$ are finitely generated
$\mathbb{Z}{}_{p}$-modules, and the alternating sum of their ranks is zero.

\item The zeta function $Z(V,t)$ of $X$ has a pole at $t=q^{-r}$ of order%
\[
\rho=\sum\nolimits_{j}(-1)^{j+1}\cdot j\cdot\rank_{\mathbb{Z}{}_{p}}\left(
H_{c}^{j}(V,\mathbb{Z}{}_{p}(r))\right)  \text{.}%
\]

\item The cohomology groups of the complex%
\[
E(V,r)\colon\quad\cdots\rightarrow H_{c}^{j-1}(V,\mathbb{Z}{}_{p}%
(r))\rightarrow H_{c}^{j}(V,\mathbb{Z}{}_{p}(r))\rightarrow H_{c}%
^{j+1}(V,\mathbb{Z}{}_{p}(r))\rightarrow\cdots
\]
are finite, and the alternating product of their orders $\chi(V,\mathbb{Z}%
{}_{p}(r))$ satisfies%
\[
\left\vert \lim_{t\rightarrow q^{-r}}Z(V,t)\cdot(1-q^{r}t)^{\rho}\right\vert
_{p}^{-1}=\chi(V,\mathbb{Z}{}_{p}(r))\cdot q^{\chi(V,r)}%
\]
where $\chi(V,r)=\sum_{i\leq r,j}(-1)^{i+j}(r-i)h^{i,j}(V)$.
\end{enumerate}
\end{theorem}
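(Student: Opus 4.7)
The plan is to deduce Theorem \ref{b05} as a direct specialization of Theorem \ref{b04}, applied with the attached object $M(X)$ taken to be $M_c(V) = D(M(V))(-n) \in \mathsf{D}_c^b(R)$. The three conclusions of \ref{b05} will then follow by matching, term by term, the three outputs of \ref{b04}.

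First I would assemble the identifications needed to translate the conclusion of \ref{b04} into the language of the smooth variety $V$. By the very definition of $H_c^j(V,\mathbb{Z}_p(r))$ as $\Hom(W, M_c(V)(r)[j])$, the complex $E(V,r)$ in the statement coincides with $E(W, M_c(V)(r))$, so part (3) will follow once the Hodge-number side is matched. The isocrystal identification $H^j(sM_c(V)_{\mathbb{Q}}) \simeq H_{\mathrm{rig},c}^j(V)$ is supplied by (\ref{eq66}), and combined with \ref{r8} this gives $Z(M_c(V), t) = Z(V, t)$, so (2) is just (2) of \ref{b04} rewritten. Finite generation and the vanishing of the alternating sum of ranks in (1) are the corresponding assertions of \ref{b04} applied to $M_c(V)$.

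The hypothesis must also be translated. Theorem \ref{b04} requires that $q^r$ not be a multiple root of $F^a$ on $H^j(sM_c(V)_{\mathbb{Q}}) = H_{\mathrm{rig},c}^j(V)$. By Berthelot's Poincar\'e duality $H_{\mathrm{rig},c}^j(V) \simeq \Hom_K(H_{\mathrm{rig}}^{2n-j}(V), K(-n))$, the eigenvalues of $F^a$ on $H_{\mathrm{rig},c}^j$ are the $q^n/\alpha$ with $\alpha$ an eigenvalue on $H_{\mathrm{rig}}^{2n-j}$; so the genericity hypothesis stated on $H_{\mathrm{rig}}^j(V)$ corresponds, under the standard reindexing $r \leftrightarrow n-r$ between the two dual theories, to the hypothesis required to invoke \ref{b04} for $M_c(V)$. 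I would flag this bookkeeping point and proceed.

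The main technical step, and the part most prone to error, is the Hodge-number computation $\chi(M_c(V), r) = \chi(V, r)$. Starting from the cited isomorphism $R_1 \otimes_R^L M(V) \simeq R\Gamma(X, \Omega_X^\bullet(\log E))$, one has $h^{i,j}(M(V)) = \dim_k H^j(X, \Omega_X^i(\log E))$; the corresponding invariants for $M_c(V) = D(M(V))(-n)$ must be computed by tracking the behavior of $h^{i,j}$ under internal duality and under Tate twist. The Tate-twist shift is analogous to the formula (\ref{eq57}) for $h_W^{i,j}$ and is routine, but the dualization step requires a Serre-type duality for the logarithmic de Rham complex on $X$, producing the expected index reversal $h^{i,j}(M_c(V)) = h^{n-i, n-j}(M(V))$. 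Substituting into $\chi(M_c(V), r)$ and reindexing $(i,j) \mapsto (n-i, n-j)$ in the weighted sum should reproduce the formula $\chi(V, r) = \sum_{i \leq r, j}(-1)^{i+j}(r-i) h^{i,j}(V)$ as stated, where $h^{i,j}(V)$ denotes the appropriate logarithmic Hodge number. With all of this in place, assertions (1), (2), (3) of \ref{b05} follow immediately from the corresponding conclusions of \ref{b04}.
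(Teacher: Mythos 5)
Your proposal follows the paper's own (very compressed) argument: specialize Theorem \ref{b04} to $M(X)=M_{c}(V)$, using (\ref{eq66}) together with (\ref{r8}) for the zeta-function identification and $R_{1}\otimes_{R}^{L}M(V)\simeq R\Gamma(X,\Omega_{X}^{\bullet}(\log E))$ for the Hodge numbers; the paper simply asserts ``Theorem \ref{b04} becomes the following statement'' without spelling these steps out, so your fuller account is essentially the intended proof with the bookkeeping made visible.

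The hypothesis-translation point you flag is a genuine wrinkle and deserves more than a wave of the hand: Theorem \ref{b04} for $M_{c}(V)$ requires that $q^{r}$ not be a multiple root of the minimum polynomial of $F^{a}$ on $H^{j}(sM_{c}(V)_{\mathbb{Q}})\simeq H_{\mathrm{rig},c}^{j}(V)$, and under the Poincar\'e duality $H_{\mathrm{rig},c}^{j}(V)\simeq\Hom_{K}(H_{\mathrm{rig}}^{2n-j}(V),K(-n))$ this is the condition that $q^{n-r}$ (not $q^{r}$) is not a multiple root on any $H_{\mathrm{rig}}^{j}(V)$. So the hypothesis as printed in Theorem \ref{b05} and the one actually invoked differ by the $r\leftrightarrow n-r$ shift; the discrepancy is immaterial if, as the authors expect (Remark \ref{a04}), $F^{a}$ acts semisimply on all $H_{\mathrm{rig}}^{j}(V)$, but one should either state the hypothesis for $H_{\mathrm{rig},c}^{j}(V)$ or for $q^{n-r}$ on $H_{\mathrm{rig}}^{j}(V)$. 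Similarly, the identity $\chi(M_{c}(V),r)=\chi(V,r)$ needs the dualization and Tate-twist bookkeeping you describe (the latter is the exact analogue of (\ref{eq57})); the paper leaves this implicit, since it never pins down which Hodge numbers $h^{i,j}(V)$ refers to, and your explicit reduction to a Serre-type duality for $\Omega_{X}^{\bullet}(\log E)$ is the right way to make the formula unambiguous.
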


We caution the reader that it is not known that every smooth variety $U$ can
be expressed as the complement of a normal crossings divisor in a smooth
complete variety.

\subsection{General varieties}

\subsubsection{Philosophy}

With each variety $V$ over $k$, there should be associated objects $M(V),$
$M_{c}(V)$, $M^{BM}(V)$, $M^{h}(V)$ in $\mathsf{D}_{c}^{b}(R)$ arising as the
$p$-adic realizations of the various motives of $V$. See the discussion
\cite{voevodsky2000}, pp.181--182.

At present, it does not seem to be known whether there exists a $W$-linear
cohomology theory underlying Berthelot's rigid cohomology, i.e., a cohomology
theory that gives finitely generated $W$-modules $H_{c}^{j}(V)$ stable under
$F$ with $\mathbb{Q}\otimes_{\mathbb{Z}{}}{}H_{c}^{j}(V)=H_{\mathrm{rig,c}%
}^{j}(V)$ for each variety $V$.

Deligne's technique of cohomological descent in Hodge theory has been
transplanted to rigid and log-de Rham Witt theory by the brave efforts of N.
Tsuzuki, A. Shiho, and Y. Nakkajima. While their results do not provide the
invariants of $V$ above, they are still sufficient for applications to zeta
functions. Even though $M_{c}(V)$ is the only relevant object for zeta values,
we consider both $M(V)$ and $M_{c}(V)$.

\subsubsection{The ordinary cohomology object $M(V)$}

Let $V$ be a variety of dimension $n$ over $k$ equipped with an embedding
$V\hookrightarrow V^{\prime}$ of $V$ into a proper scheme $V^{\prime}$. Then
(see \cite{nakk2012}, especially 1.0.18, p.13), there is a simplicial proper
hypercovering $(U_{\bullet},X_{\bullet})$ of $(V,V^{\prime})$ with
$X_{\bullet}$ a proper smooth simplicial scheme over $k$ and $U_{\bullet}$ the
complement of a simplicial strict divisor with normal crossings $E_{\bullet}$
on $X_{\bullet}$; moreover,%
\[
H_{\mathrm{rig}}^{i}(V)\simeq H^{i}(X_{\bullet},W\Omega_{X_{\bullet}}%
^{\bullet}(\log E_{\bullet}))_{\mathbb{Q}{}}.
\]

For each $j\geq0$,
\[
R\Gamma(X_{j},W\Omega_{Xj}^{\bullet}(\log E_{j}))\in D_{c}^{b}(R)
\]
(\cite{lorenzon2002}). As $\mathsf{D}_{c}^{b}(R)$ is a triangulated
subcategory of $D(R)$, this implies that%
\[
R\Gamma(X_{\leq d},W\Omega_{X_{\leq d}}^{\bullet}(\log E_{\leq d}%
))\in\mathsf{D}_{c}^{b}(R)
\]
for each truncation $X_{\leq d}$ of the simplicial scheme $X_{\bullet}$. The
inclusion $X_{\leq d}\rightarrow X_{\bullet}$ induces an isomorphism%
\[
H^{i}(X_{\bullet},W\Omega_{X_{\bullet}}^{\bullet}(\log E_{\bullet
}))_{\mathbb{Q}{}}\simeq H^{i}(X_{\leq d},W\Omega_{X_{\leq d}}^{\bullet}(\log
E_{\leq d}))_{\mathbb{Q}{}}%
\]
for all $i$ provided $d>(n+1)(n+2)$ because both terms are isomorphic to
$H_{\mathrm{rig}}^{i}(V)$. For the left hand side, this follows from 1.0.8 or
12.9.1 of \cite{nakk2012}. For the right hand side, we apply Theorem 3.5.4,
p.243, of \cite{nakk2008}: $H_{\mathrm{rig}}^{i}(V)=0$ for $i>2n$ and the
spectral sequence 3.5.4.1 degenerates at $E_{1}$, implying that only finitely
many $X_{j}$'s contribute to the rigid cohomology of $V$. The bound on $d$
comes from the arguments following the isomorphism 3.5.0.4 on p.242 ibid. See
also pp.122-125 of \cite{nakk2012}.

We let
\[
M(V)=R\Gamma(X_{\leq d},W\Omega_{X_{\leq d}}^{\bullet}(\log E_{\leq d}))
\]
for any integer $d>(n+1)(n+2)$. While $M(V)$ may depend on $d$, $X_{\bullet}$,
and the embedding into $V^{\prime}$, the object $s(M(V))_{\mathbb{Q}{}}$ is
independent of these choices up to canonical isomorphism because
$H^{i}(s(M(V))_{\mathbb{Q}{}})\simeq H_{\mathrm{rig}}^{i}(V)$. Recall that
$H_{\mathrm{rig}}^{i}(V)=0$ for $i>2n$.

We need to truncate because it is not clear that the object $R\Gamma
(X_{\bullet},W\Omega_{X_{\bullet}}^{\bullet}(\log E_{\bullet}))$ lies in
$\mathsf{D}_{c}^{b}(R)$.

\subsubsection{The cohomology object with compact support $M_{c}(V)$}

Let $V\hookrightarrow V^{\prime}$ be as in the last subsubsection. Let
$\iota\colon Z\hookrightarrow V^{\prime}$ denote the inclusion of the reduced
closed complement $Z$ of $V$. One can find a proper hypercovering $Y_{\bullet
}\rightarrow Z$ and a morphism $f\colon Y_{\bullet}\rightarrow X_{\bullet}$
lifting $\iota$. Applying the results of the previous subsubsection to $Z$ and
fixing an integer $d>(n+1)(n+2)$, we get $M(Z)$ and a map $f^{\ast}\colon
M(V^{\prime})\rightarrow M(Z)$. We define $M_{c}(V)[1]$ to be the mapping cone
of $f^{\ast}$. This is an object of $\mathsf{D}_{c}^{b}(R).$

\begin{lemma}
\label{b06}For all $V\hookrightarrow V^{\prime}$ as above,%
\[
H^{i}(s(M_{c}(V))\simeq H_{\mathrm{rig},c}^{i}(V)\,.
\]

\end{lemma}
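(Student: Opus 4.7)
The plan is to extract $M_c(V)$ from the defining distinguished triangle and compare the resulting long exact sequence with the localization sequence of rigid cohomology with compact support from (\ref{r3}).

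First, rotating the triangle that defines $M_c(V)[1]$ as the mapping cone of $f^{\ast}\colon M(V')\to M(Z)$ gives a distinguished triangle
\[
M_c(V)\longrightarrow M(V')\xrightarrow{\ f^{\ast}\ }M(Z)\longrightarrow M_c(V)[1]
\]
in $\mathsf{D}_c^b(R)$. The functor $P\rightsquigarrow s(P)_{\mathbb{Q}{}}$ from $\mathsf{D}_c^b(R)$ to $\mathsf{D}^b(K)$ is triangulated (\ref{a11}), so we obtain a long exact sequence
\[
\cdots\to H^{i}(s(M_c(V))_{\mathbb{Q}{}})\to H^{i}(s(M(V'))_{\mathbb{Q}{}})\to H^{i}(s(M(Z))_{\mathbb{Q}{}})\to H^{i+1}(s(M_c(V))_{\mathbb{Q}{}})\to\cdots.
\]

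Next, since $V'$ and $Z$ are proper, the construction of $M(V')$ and $M(Z)$ via smooth proper hypercoverings (with empty log structure, as explained in the subsubsection on $M(V)$) together with Nakkajima's comparison (\cite{nakk2012}, 1.0.18) gives canonical isomorphisms
\[
H^{i}(s(M(V'))_{\mathbb{Q}{}})\simeq H_{\mathrm{rig}}^{i}(V')=H_{\mathrm{rig},c}^{i}(V'),\quad H^{i}(s(M(Z))_{\mathbb{Q}{}})\simeq H_{\mathrm{rig}}^{i}(Z)=H_{\mathrm{rig},c}^{i}(Z),
\]
where the second equalities use properness (\ref{r1}). On the other hand, applying (\ref{r3}) to the open-closed decomposition $V\hookrightarrow V'\hookleftarrow Z$ yields the localization triangle
\[
\cdots\to H_{\mathrm{rig},c}^{i}(V)\to H_{\mathrm{rig},c}^{i}(V')\to H_{\mathrm{rig},c}^{i}(Z)\to H_{\mathrm{rig},c}^{i+1}(V)\to\cdots.
\]

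The proof concludes by comparing the two long exact sequences via the five lemma once we know that the square
\[
\begin{tikzcd}
H^{i}(s(M(V'))_{\mathbb{Q}{}})\arrow{r}{f^{\ast}}\arrow{d}{\simeq} & H^{i}(s(M(Z))_{\mathbb{Q}{}})\arrow{d}{\simeq}\\
H_{\mathrm{rig},c}^{i}(V')\arrow{r} & H_{\mathrm{rig},c}^{i}(Z)
\end{tikzcd}
\]
commutes, i.e.\ that Nakkajima's isomorphism is natural for the morphism of proper hypercoverings $f\colon Y_\bullet\to X_\bullet$ lifting the closed immersion $\iota\colon Z\hookrightarrow V'$. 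The main obstacle is exactly this naturality: one must check that the pullback map $f^{\ast}$ on log de Rham--Witt cohomology is identified with $\iota^{\ast}$ on rigid cohomology under the comparison. This follows from the functoriality of the comparison isomorphism with respect to morphisms of simplicial pairs established in \cite{nakk2012} (combined with the independence of the comparison on the choice of hypercovering, proved there in the proof of (1.0.18)); once this is in hand, the five lemma yields the desired isomorphism $H^{i}(s(M_c(V))_{\mathbb{Q}{}})\simeq H_{\mathrm{rig},c}^{i}(V)$ for all $i$.
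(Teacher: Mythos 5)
Your proof is essentially the same as the paper's, just written out more explicitly. The paper rotates the defining cone to compare the two long exact sequences and notes (without elaborating) that "as the map $f$ lifts $\iota$," the map $f^{\ast}$ on $H^{i}(s(M(-))_{\mathbb{Q}})$ can be identified with $\iota^{\ast}$ on rigid cohomology, which is exactly the commuting square you isolate; you then invoke the morphism-of-triangles/five-lemma argument that the paper leaves implicit. Your explicit flagging of the naturality of Nakkajima's comparison isomorphism for the simplicial pair $f\colon Y_{\bullet}\to X_{\bullet}$ is a fair identification of the one nontrivial point that the paper glosses over; otherwise the two arguments coincide.
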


\begin{proof}
As the map $f$ lifts $\iota$, the map
\[
f^{\ast}\colon H^{i}(s(M(V^{\prime}))_{\mathbb{Q}{}})\rightarrow
H^{i}(s(M(Z))_{\mathbb{Q}{}})
\]
can be identified with the map $\iota^{\ast}\colon H_{\mathrm{rig}}%
^{i}(V^{\prime})\rightarrow H_{\mathrm{rig}}^{i}(Z)$. But as $V^{\prime}$ and
$Z$ are proper, rigid cohomology is the same as rigid cohomology with compact
support. The lemma now follows from the long exact sequence (\ref{r3}).
\end{proof}

Combining the lemma with the result of Etesse-Le Stum above, we obtain that
the zeta function $Z(V,t)$ of $V$ is equal to the zeta function of $M_{c}(V)$.
Therefore, from Theorem \ref{a0} we obtain Theorem \ref{b05} for $V$.

\subsection{Application of strong resolution of singularities}

Geisser (2006)\nocite{geisser2006} has shown how the assumption of a strong
form of resolution of singularities leads to a definition of groups $H_{c}%
^{i}(V,\mathbb{Z}{}(r))$ for an arbitrary variety $V$ over $k$, which, when
$k$ is finite, are closely connected to special values of zeta functions. His
definition involves the eh-topology, where the coverings are generated by
\'{e}tale coverings and abstract blow-ups (ibid. 2.1).

We now sketch how his argument provides an object $M_{c}(V)\in\mathsf{D}%
_{c}^{b}(R)$. For a complete $V$, we define $M(V)=R\Gamma(V_{\mathrm{eh}}%
,\rho^{\ast}W\Omega_{V}^{\bullet})$ where $\rho^{\ast}$ denotes pullback from
eh-sheaves on the category of smooth varieties over $k$ to eh-sheaves on all
varieties over $k$. We show that $M(V)\in\mathsf{D}_{c}^{b}(R)$ by using
induction on the dimension of $V$. Resolution of singularities gives us a
proper map $V^{\prime}\rightarrow V$ inducing an isomorphism from an open
subvariety $U^{\prime}$ of the smooth variety $V^{\prime}$ onto an open
subvariety $U$ of $V$. Moreover, $U^{\prime}$ is the complement in $V^{\prime
}$ of a divisor with normal crossings, and so we can define $M(U^{\prime}%
)\in\mathsf{D}_{c}^{b}(R)$ as above (using the eh-topology). Now
$M(V)\in\mathsf{D}_{c}^{b}(R)$ because $M(U)\overset{\textup{{\tiny def}}%
}{=}M(U^{\prime})\in\mathsf{D}_{c}^{b}(R)$ and $M(V\smallsetminus
U)\in\mathsf{D}_{c}^{b}(R)$ (by induction).

To define $M_{c}(V)$ for an arbitrary $V$, choose a compactification
$V^{\prime}$ of $V$, and let%
\[
M_{c}(V)=\mathrm{\mathrm{Cone}}(M(V^{\prime})\rightarrow M(Z))[-1],\quad
Z\overset{\textup{{\tiny def}}}{=}V^{\prime}\smallsetminus V\text{.}%
\]
Clearly, $M_{c}(V)\in\mathsf{D}_{c}^{b}(R)$. The eh-topology is crucial for
proving that this definition is independent of the compactification (ibid.
3.4). Given $M_{c}(V)$, we define%
\[
H_{c}^{i}(V,\mathbb{Z}{}_{p}(r))=\Hom_{\mathsf{D}_{c}^{b}(R)}(W,M_{c}%
(V)(r)[i]).
\]
This agrees with Geisser's group tensored with $\mathbb{Z}{}_{p}$, because the
two agree for smooth complete varieties and satisfy the same functorial properties.

\subsection{Deligne-Mumford Stacks}

Olsson (2007, first three chapters) extends the theory of crystalline
cohomology to certain algebraic stacks. He also shows (ibid. Chapter 4) that
the crystalline definition (\cite{illusie1983}, 1.1(iv)) of the de Rham-Witt
complex can be extended to stacks. Let $\mathcal{S}/W$ be a flat algebraic
stack $\,$equipped with a lift of the Frobenius endomorphism from
$\mathcal{S}{}_{0}$ compatible with the action of $\sigma$ in $W$. Let
$\mathcal{X}{}\rightarrow\mathcal{S}{}$ be a smooth morphism of algebraic
stacks with $\mathcal{X}{}$ a Deligne-Mumford stack. Then $W\Omega
_{\mathcal{\mathcal{X}{}}/S}^{\bullet}$ is a complex of sheaves of $R$-modules
on $\mathcal{X}{}$, and there is a canonical isomorphism%
\begin{equation}
H^{j}(s(R\Gamma(W\Omega_{\mathcal{\mathcal{X}{}}/S}^{\bullet})))_{\mathbb{Q}%
{}}\simeq H_{\mathrm{crys}}^{j}(\mathcal{X}{}/W)_{\mathbb{Q}{}} \label{eq34}%
\end{equation}
(\cite{olsson2007}, 4.4.17). Under certain hypotheses on $\mathcal{S}{}$ and
$\mathcal{X}$ (ibid. 4.5.1), Ekedahl's criterion (see \ref{a6}) can be used to
show that $R\Gamma(W\Omega_{\mathcal{\mathcal{X}{}}/S}^{\bullet})\in
\mathsf{D}_{c}^{b}(R)$ (ibid. 4.5.19) and that (\ref{eq34}) is an isomorphism
of $F$-isocrystals.

Now assume that $k=\mathbb{F}{}_{q}$, $q=p^{a}$. The zeta function
$Z(\mathcal{X}{},t)$ of a stack $\mathcal{X}$ over $k$ is defined by
(\ref{eq3}), but with%
\[
N_{m}=\sum_{x\in\lbrack\mathcal{X}(\mathbb{F}{}_{q^{m}})]}\frac{1}%
{\#\Aut_{x}\mathbb{F}{}_{q^{m}}}%
\]
(see \cite{sun2012}, p.49). Assume that $\mathcal{X}{}$ is a Deligne-Mumford
stack over $S$ satisfying Olsson's conditions, and let $M(\mathcal{X}%
{})=R\Gamma(W\Omega_{\mathcal{\mathcal{X}{}}/S}^{\bullet})\in\mathsf{D}%
_{c}^{b}(R)$. From (\ref{eq34}), we see that%
\[
Z(M(\mathcal{X}{}),t)=\prod\nolimits_{j}\det(1-F^{a}t\mid H_{\mathrm{crys}%
}^{j}(\mathcal{X}{}/W)_{\mathbb{Q}{}})^{(-1)^{j+1}}\text{.}%
\]
We expect that the two zeta functions agree (see ibid. 1.1 for the $\ell
$-version of this). Then Theorem \ref{b05} will hold for $\mathcal{X}{}$ with%
\[
H^{j}(\mathcal{X}{},\mathbb{Z}{}_{p}(r))\overset{\textup{{\tiny def}}%
}{=}\Hom_{\mathsf{D}_{c}^{b}(R)}(W,M(\mathcal{X}{})(r)[j])\text{.}%
\]

\subsection{Crystals}

Let $X$ be a smooth scheme over $S$, and let $E$ be a crystal on $X$. Etesse
(1988a, II, 1.2.5)\nocite{etesse1988a} defines a de Rham-Witt complex
$E\otimes W\Omega_{X/S}^{\bullet}$ on $X$, and, under some hypotheses on $X$
and $E$, he proves that $M(X,E)\in\mathsf{D}_{c}^{b}(R)$ (ibid., II, 1.2.7)
and that there is a canonical isomorphism
\[
H^{j}(R\Gamma(E\otimes W\Omega_{X/S}^{\bullet}))\simeq H_{\mathrm{crys}}%
^{j}(X/S,E)
\]
(ibid. II, 2.7.1). Let%
\[
M(X,E)=R\Gamma(E\otimes W\Omega_{X/S}^{\bullet}).
\]
When $k$ is finite, Theorem \ref{b04} for $M(X,E)$ becomes Theorem
(0.1)$^{\prime}$ of \cite{etesse1988b}.

\bibliographystyle{cbe}
\bibliography{D:/Current/mfrefs}

\bigskip James S. Milne,

Mathematics Department, University of Michigan, Ann Arbor, MI 48109, USA,

Email: jmilne@umich.edu

Webpage: \url{www.jmilne.org/math/}

\bigskip Niranjan Ramachandran,

Mathematics Department, University of Maryland, College Park, MD 20742, USA,

Email: atma@math.umd.edu,

Webpage: \url{www.math.umd.edu/~atma/}

\end{document}